\newtheorem{theorem}{Theorem}[section]
\newtheorem{lemma}[theorem]{Lemma}
\newtheorem{proposition}[theorem]{Proposition}
\numberwithin{equation}{section}
\newtheorem{definition}[theorem]{Definition}
\def\tempsepline{{}-\!\!\!-\!\!\!-\!\!\!-\!\!\!-\!\!\!-\!\!\!-}
\def\tempsep{\smallskip\centerline{$\tempsepline\!\!\!\!\hbox to 0mm{\hss//\hss}\!\tempsepline$}\smallskip}
\def\R{\mathbb R}
\def\D{\mathbb D}
\def\C{\mathbb C}
\DeclareMathOperator{\diag}{diag}
\theoremstyle{remark}
\newtheorem*{remarks}{Remarks}
\newtheorem*{remark}{Remark}
\newcommand{\bbC}{{\mathbb{C}}}
\newcommand{\bbD}{{\mathbb{D}}}
\newcommand{\bbE}{{\mathbb{E}}}
\newcommand{\bbH}{{\mathbb{H}}}
\newcommand{\bbR}{{\mathbb{R}}}
\newcommand{\bbU}{{\mathbb{U}}}
\newcommand{\bbZ}{{\mathbb{Z}}}
\newcommand{\bbO}{{\mathbb{O}}}
\newcommand{\bbSO}{{\mathbb{SO}}}
\newcommand{\bbUSp}{{\mathbb{U}Sp}}
\newcommand{\calC}{{\mathcal{C}}}
\newcommand{\calL}{{\mathcal L}}
\newcommand{\calM}{{\mathcal M}}
\newcommand{\calN}{{\mathcal N}}
\newcommand{\calS}{{\mathcal S}}
\newcommand{\beq}{\begin{equation}}
\newcommand{\eeq}{\end{equation}}
\newcommand{\ba}{\begin{align*}}
\newcommand{\ea}{\end{align*}}
\newcommand{\HR}{{\mathbb{H_R}}}
\newcommand{\bbHR}{{\mathbb{H_R}}}
\newcommand{\bbHC}{{\mathbb{H_C}}}
\newcommand{\ii}{{\mathsf{i}}}
\newcommand{\jj}{{\mathsf{j}}}
\newcommand{\kk}{{\mathsf{k}}}
\newcommand{\qtq}[1]{\quad\text{#1}\quad}
\newcommand{\fC}{{\mathfrak{C}}}
\DeclareMathOperator{\real}{Re}
\DeclareMathOperator{\imag}{Im}
\begin{document}

\title[Truncations of unitary ensembles]{Matrix models and eigenvalue statistics for truncations of classical ensembles
of random unitary matrices}
\author{Rowan Killip and Rostyslav Kozhan}
\address{
         UCLA Mathematics Department\\
         Box 951555, Los Angeles, CA 90095, US}
\address{
         Uppsala University\\
         Box 480, 751 06 Uppsala, Sweden}
\email{killip@math.ucla.edu, rostyslav.kozhan@math.uu.se}







\begin{abstract}
We consider random non-normal matrices constructed by removing one row and column from samples from Dyson's circular ensembles or samples from the classical compact groups.
We develop sparse matrix models whose spectral measures match these ensembles. This allows us to compute the joint law of the eigenvalues, which have a natural interpretation as resonances for open quantum systems or as electrostatic charges located in a dielectric medium.

Our methods allow us to consider all values of $\beta>0$, not merely $\beta=1,2,4$.
\end{abstract}

\maketitle

\section{Introduction}\label{sIntro}

The main objects of investigation of this paper are Dyson's circular ensembles of unitary random matrices, as well as the orthogonal and compact symplectic groups equipped with the Haar measure.  To give the flavor of the results, let us restrict our attention in this introduction to Dyson's circular ensembles only.

In 1962 Dyson~\cite{Dyson} introduced three ensembles of unitary random matrices ($CUE$, $COE$, and $CSE$) to model complex physical systems (e.g., evolution operators of closed quantum systems) corresponding to various physical symmetry classes. We begin with the definitions.  Note that the $CSE$ ensemble is best explained through the use of (real and complex) quaternions.  We review this material and introduce the relevant notations in Appendix~\ref{sQuaternions}.

\begin{definition}\label{DysonEnsembles}
Dyson's three circular ensembles are:
\begin{itemize}
\item[(CUE)] The circular unitary ensemble $CUE(n)$ is the set $($group$)$ of all $n\times n$ unitary matrices $\bbU(n)$ endowed with the Haar measure $($which comes from the group structure of $\bbU(n))$.
\item[(COE)] The circular orthogonal ensemble $COE(n)$ is the set of all $n\times n$  symmetric unitary matrices with the measure induced by the Haar measure on $\bbU(n)$ via the mapping 
    \begin{align*}
    \bbU(n) & \to COE(n) \\
    U & \mapsto U^T U.
    \end{align*}
\item[(CSE)] The circular symplectic ensemble $CSE(n)$ is the set of all $n\times n$ complex quaternionic matrices that are both unitary and self-dual. The measure on this set is taken to be the one induced by the Haar measure on $\bbU(2n)$ via the mapping
\begin{align*}
    \bbU(2n) & \to CSE(n) \\
    U & \mapsto \fC^{-1}(U^R U).
\end{align*}
\end{itemize}
\end{definition}
As argued by Dyson, $COE$ is relevant in most practical circumstances, with the two exceptions of systems without time-reversal invariance (when $CUE$ should be used) and odd-spin systems with time-reversal but without rotational symmetry (when $CSE$ should be used).

Dyson computed that the eigenvalues of these ensembles are jointly distributed on $\partial\bbD^n := \{e^{i\theta}: 0\le \theta <2\pi\}^n$ proportionally to
\begin{equation*}
\propto \prod_{j<k} \left| e^{i \theta_j} - e^{i\theta_k} \right| ^\beta d\theta_1 \ldots d\theta_n,
\end{equation*}
where $\beta=1,2,4$ for $COE(n)$, $CUE(n)$, $CSE(n)$, respectively (for $CSE(n)$ each of the $n$ eigenvalues is of multiplicity $2$).

For each of the listed ensembles we consider their truncations: choose at random a matrix from the ensemble and delete any row and the corresponding column (quarternionic row/column in the CSE case). These truncations appear naturally as the evolution operators of \textit{open} quantum systems, and the eigenvalues of these truncations can be thought of as the scattering resonances of a random quantum system attached to a ``lead''. This and other physical applications of truncations have recently gained a significant popularity in the mathematical and  physical literature. We refer the interested reader to excellent review papers~\cite{FyoSav,FyoSom,KhoSom} and references therein.

The unitary truncations have connections in combinatorics (see~\cite{Nov07}) and to the theory random Gaussian analytic functions (see~\cite{Kri09,For10,Hough}).

To be able to systematically analyze the truncations, it is natural to start with finding the joint eigenvalue distribution. For the truncations of $CUE(n+1)$ it has been computed  in the physics literature by {\.Z}yczkowski--Sommers~\cite{ZSommers} 
(see also~\cite{Petz,ForKri,Nov07}).
 The result is that the eigenvalues are distributed in $\bbD^{n} = \{z\in\bbC: |z|< 1\}^{n}$ according to
\begin{equation*}\label{unitary_2}
    \tfrac{1}{\pi^{n}} \prod_{j,k=1}^{n}   \prod_{j<k} |z_k-z_j|^2 \, d^2 z_1\ldots d^2 z_{n},
\end{equation*}
where $d^2$ is the two-dimensional Lebesgue measure on $\bbD$.
In this paper we solve this problem for $COE$ and $CSE$.

\begin{theorem}\label{introThm}

The eigenvalues $z_1,\ldots,z_{n}$ of $COE(n+1)$, $CUE(n+1)$, $CSE(n+1)$ ensembles with one row and column removed are distributed in $\bbD^n$ according to
\begin{equation}\label{eigenvaluesDyson}
\tfrac{\beta^n}{(2\pi)^n} \prod_{j,k=1}^n (1-z_j \bar{z}_k)^{\frac{\beta}{2}-1}  \prod_{j<k} |z_k-z_j|^2 \, d^2 z_1\ldots d^2 z_n
\end{equation}
with $\beta=1,2,4$ respectively. 
\end{theorem}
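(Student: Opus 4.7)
The plan is to follow the matrix-model strategy advertised in the abstract. First, I would realize each circular ensemble in a sparse (Hessenberg/CMV) form, in which the matrix is determined by independent Verblunsky coefficients with explicit densities. For the $C\beta E(n+1)$ matrix, acting on its cyclic vector $e_1$, the Killip--Nenciu result provides such a representation: the coefficients $\alpha_0,\ldots,\alpha_{n-1}$ are independent, with $\alpha_j$ distributed on $\bbD$ with density proportional to $(1-|\alpha_j|^2)^{\beta(n-j)/2-1}\,d^2\alpha_j$, and $\alpha_n$ uniform on the unit circle. The analogous parameterizations would have to be developed for COE and CSE (and for the compact groups treated later in the paper), respecting the symmetric/self-dual constraints; this is where the quaternionic formalism of Appendix~\ref{sQuaternions} enters, and where most of the bookkeeping lives.

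Next, I would observe that in this sparse form the deletion of the last row and column of the $(n+1)\times(n+1)$ matrix produces a Hessenberg contraction whose entries remain closed-form functions of $\alpha_0,\ldots,\alpha_{n-1}$ alone. Since the law of the full matrix is invariant under conjugation by permutation matrices, it makes no difference which row and column are deleted, and one may assume the last. The eigenvalues $z_1,\ldots,z_n\in\bbD$ of this truncation coincide with the zeros of the Szeg\H{o} polynomial $\Phi_n$ associated with $\alpha_0,\ldots,\alpha_{n-1}$, which gives a diffeomorphism from these Verblunsky coordinates to $(z_1,\ldots,z_n)$ together with a set of auxiliary spectral weights extracted from the spectral measure at the cyclic vector.

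The heart of the proof is then a Jacobian calculation, in the spirit of Dumitriu--Edelman and Killip--Nenciu, expressing $d\alpha_0\cdots d\alpha_{n-1}$ in terms of $(z_1,\ldots,z_n)$ and the weights, and integrating the weights out. Standard orthogonal-polynomial identities convert $\prod_j(1-|\alpha_j|^2)$ to an expression in $\prod_k(1-|z_k|^2)$ times weight factors and produce the Vandermonde $\prod_{j<k}|z_j-z_k|^2$; the residual density $\prod_j(1-|\alpha_j|^2)^{\beta/2-1}$ is then repackaged by the same identities into the new feature $\prod_{j,k}(1-z_j\bar z_k)^{\beta/2-1}$ appearing in \eqref{eigenvaluesDyson}. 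Note that this factor collapses to $1$ when $\beta=2$, recovering \.{Z}yczkowski--Sommers as a consistency check.

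The main obstacles I anticipate are twofold: (i) producing the correct sparse model for COE and CSE, where the symmetry constraints modify the independence structure and radial densities of the Verblunsky coefficients (in particular the CSE case must be phrased in the complex-quaternionic language of the appendix); and (ii) executing the Jacobian computation uniformly in $\beta>0$, so that $\beta=1,2,4$ all drop out of a single formula that, as promised in the abstract, continues to make sense for general $\beta$.
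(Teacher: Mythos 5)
Your overall strategy (CMV model with independent Verblunsky coefficients, truncate, then a Jacobian computation through the polynomial coefficients) is the same as the paper's, but there is a genuine gap at the truncation step. The permutation invariance you invoke applies to the original ensemble matrix $U$, not to its CMV form: since the CMV-fication $U=W\calC W^\dagger$ fixes only the cyclic vector $e_1$, deleting a row and column of $U$ corresponds to deleting the \emph{first} row and column of $\calC(\alpha_0,\ldots,\alpha_n)$, whereas deleting the \emph{last} row and column of $\calC$ compresses $U$ onto the orthogonal complement of the last CMV basis vector, a random, $U$-dependent vector to which no invariance argument applies. The two operations give genuinely different eigenvalue laws. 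Indeed, deleting the last row and column yields the cut-off matrix $\calC(\alpha_0,\ldots,\alpha_{n-1})$ with the coefficients still carrying the Killip--Nenciu parameters in \emph{decreasing} order, $\alpha_k\sim\Theta(\beta(n-k)+1)$; pushing this through the Jacobian of Lemma~\ref{lemma2} gives a density proportional to $|\Delta(z)|^2\prod_k(1-|\alpha_k|^2)^{\frac\beta2(n-k)-1-k}$, which already for $CUE(3)$ (i.e.\ $\beta=2$, $n=2$) is $\propto|\Delta(z_1,z_2)|^2\,(1-|\alpha_0|^2)(1-|\alpha_1|^2)^{-1}$ and fails to reproduce the {\.Z}yczkowski--Sommers law. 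The paper instead proves Lemma~\ref{truncations}: removing the first row and column of a unitary $\calC(\alpha_0,\ldots,\alpha_n)$ is unitarily equivalent to $\calC(-\bar\alpha_{n-1}\alpha_n,\ldots,-\bar\alpha_0\alpha_n)$ (transposed when $n$ is even), and then uses the rotational invariance of the $\Theta$-laws to conclude that the truncated model has independent coefficients with the \emph{reversed}, increasing parameters $\alpha_k\sim\Theta(\beta(k+1)+1)$ as in \eqref{truncDyson}. It is exactly this reversal that makes the exponent in the Jacobian computation collapse to $(\tfrac\beta2-1)(k+1)$ and, via Lemma~\ref{OP_lemma}(vi), produce the factor $\prod_{j,k}(1-z_j\bar z_k)^{\beta/2-1}$ in \eqref{eigenvaluesDyson}. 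Without a statement of this kind your argument computes the law of the zeros of the wrong random polynomial.

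Two smaller points. First, in the truncated setting there are no spectral weights to integrate out: the truncation is determined by the $n$ coefficients $\alpha_0,\ldots,\alpha_{n-1}$ alone, and the change of variables is directly from these to the zeros $z_1,\ldots,z_n$ through the coefficients $\kappa_j^{(n)}$ of $\Phi_n$ (Lemmas~\ref{lemma1}--\ref{lemma2}); the weights only enter the analysis of the non-truncated ensembles. Second, your worry about modified independence structure for $COE$ and $CSE$ is resolved more simply than you anticipate: $COE(n+1)$ CMV-fies with the same independent scalar coefficients (just $\beta=1$ in \eqref{dysonCMV}), and $CSE(n+1)$ requires the $2\times2$ block CMV-fication, where the blocks turn out to be scalar multiples of $I_2$ with the $\beta=4$ law, so the scalar Jacobian computation of Theorem~\ref{thm1} applies verbatim.
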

\begin{remarks}
1. For $CSE$ each of the eigenvalues is of multiplicity $2$.

2. In fact, we show that for any $0<\beta<\infty$~\eqref{eigenvaluesDyson} is the eigenvalue distribution of the circular $\beta$-ensemble of Killip--Nenciu~\cite{KN} (see Remark 1 after Proposition~\ref{thmDysonCMV}) with the first row and column removed.

3. $\beta=2$ is quite special here as the process becomes determinantal~\cite{ZSommers}.
\end{remarks}

One of the ingredients of the proof is to reduce these truncations to a special sparse matrix form (namely, CMV form, see Definition~\ref{CMV}) that depends on only $n$ independent coefficients with explicit distributions. This is done in Proposition~\ref{thmTruncModels} and could be of interest on its own for simulation purposes.

Another way to look at this is that we show that the eigenvalues of the truncations coincide with the zeros of orthogonal polynomials on the unit circle with random recursion coefficients that are independently distributed according to the explicit distributions~\eqref{truncDyson}.

\begin{figure}
\centering
\minipage{0.32\textwidth}
\includegraphics[width=\linewidth]{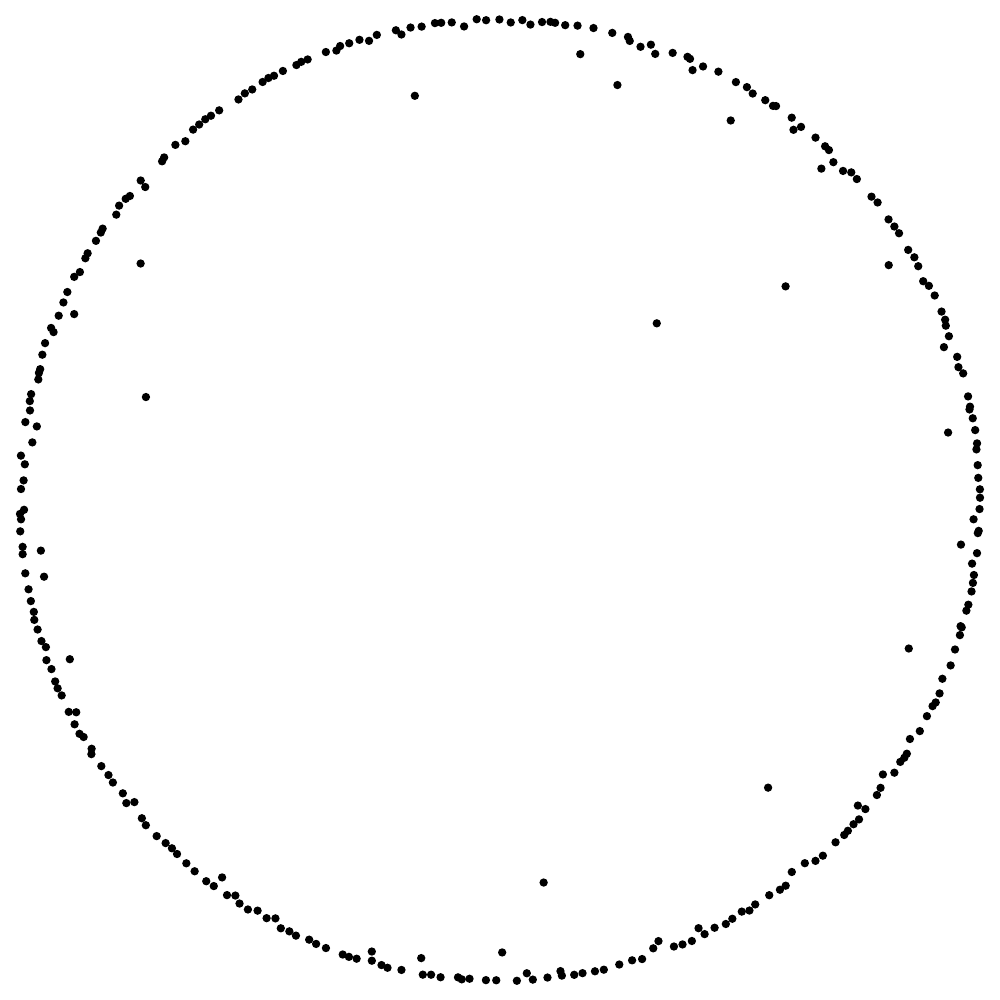}
\captionsetup{labelformat=empty}
\caption{(a)}
\addtocounter{figure}{-1}
\endminipage\hfill
\minipage{0.32\textwidth}
\includegraphics[width=\linewidth]{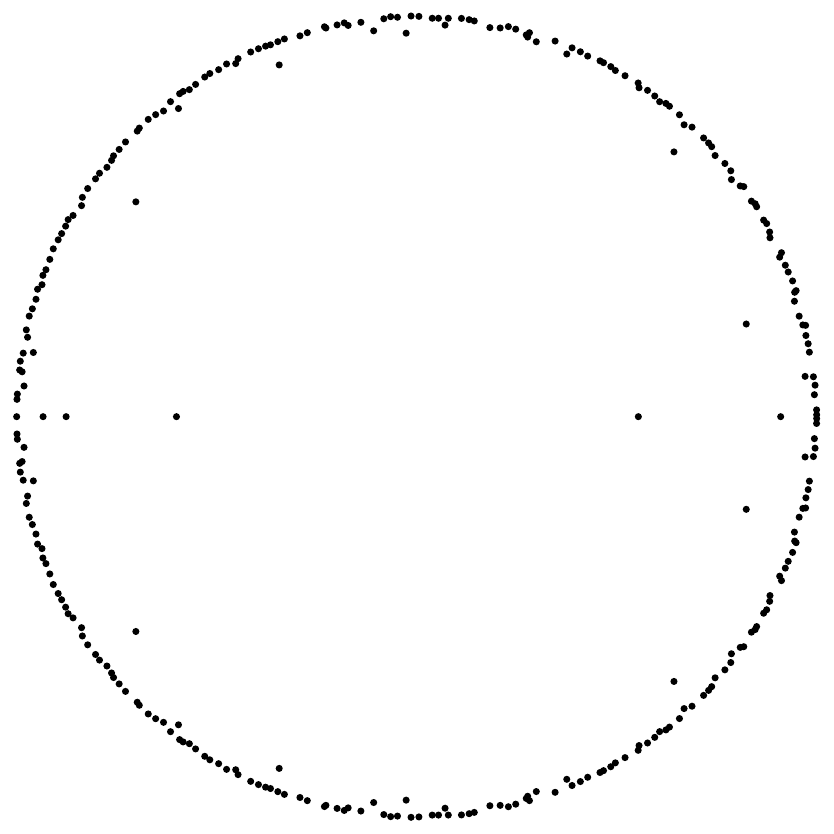}
\captionsetup{labelformat=empty}
\caption{(b)}
\addtocounter{figure}{-1}
\endminipage\hfill
\minipage{0.32\textwidth}
\includegraphics[width=\linewidth]{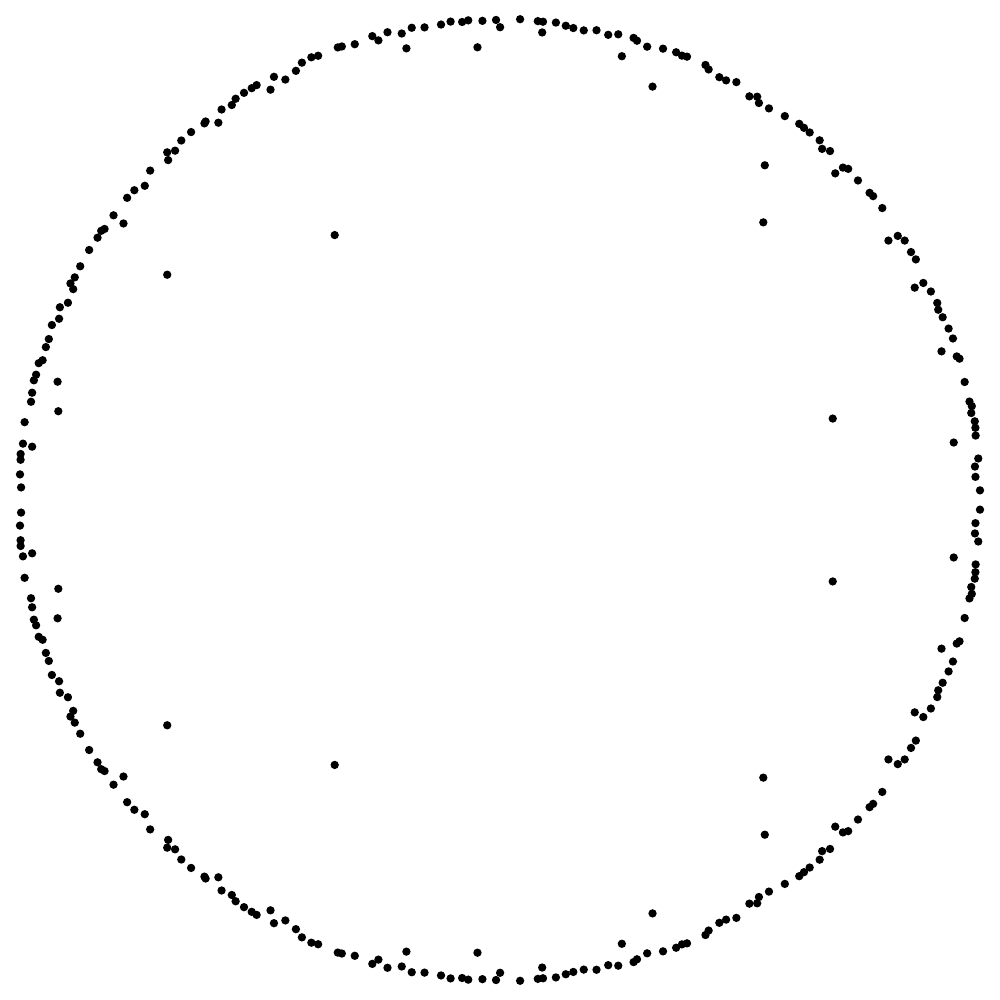}
\captionsetup{labelformat=empty}
\caption{(c)}
\addtocounter{figure}{-1}
\endminipage
\caption{A realization of random eigenvalues for truncations of: (a) $\bbU(n+1)$ with $n=301$; (b) $\bbO(n+1)$ with $n=301$; (c) $\bbUSp(n+1)$ with $n=151$.}
\end{figure}

\smallskip

Similarly, we develop matrix models for truncations of the orthogonal group (and, more generally, of the 
real orthogonal $\beta$-ensemble) 
and of the compact symplectic group. We establish connection to zeros of orthogonal polynomials and compute the distribution of the (independent) recurrence coefficients. Finally, we compute the distribution of eigenvalues of these ensembles (equivalently, distribution of zeros of these random orthogonal polynomials), with the exception of the eigenvalue distribution  of the truncated compact symplectic group, which remains an open problem as of now. For a reader who is wondering why the compact symplectic group seems special among all the other classical ensembles, we remark that deleting one quaternionic row and column corresponds to the deletion of \textit{two} complex rows and columns.
Truncation of more than 1 row and column can also in principle be attacked with our methods, which leads to a CMV model with \textit{matrix-valued} Verblunsky coefficients. However, the Jacobian computation is not technically manageable to us at this point. We leave this as an interesting open problem.

The joint eigenvalue distribution for truncated $\bbO(2k)$ was found earlier via ad hoc methods by Khoruzhenko--Sommers--{\.Z}yczkowski in~\cite{TruncOrthogonal}. Since the completion of this work, Forrester~\cite{For16} reported a formula for the joint eigenvalue density of  truncated $\bbUSp(n)$ ensemble.

Our methods can be applied to  random ensembles with the so-called non-ideal coupling (Fyodorov--Sommers~\cite{FyoSom00}, Fyodorov~\cite{Fyo01}, and Fyodorov--Khoruzhenko~\cite{FyoKho07}). We produce $\beta$-matrix models and compute the joint eigenvalue density.


After establishing the joint distribution, it is natural to ask what can be said about  the correlation functions and their large $n$ behavior. In our upcoming paper~\cite{KK_universality} we investigate the microscopic limit of the $1$-point correlation function of the circular $\beta$-ensemble ($0<\beta<\infty$). In fact, we show that the same limit is obtained for a wide class of subunitary random matrices.


The organization of the paper is as follows. In Section~\ref{sPreliminaries} we introduce the rest of our unitary random matrix ensembles (the compact groups) and define CMV matrices and the CMV-fication algorithm. In the rather lengthy Section~\ref{sSpectral} we compute the random spectral measure of each of the ensembles we study. Their eigenvalue distributions are well-known of course, but we also need distributions of the eigenweights of the spectral measures. In Section~\ref{sModels} we present the matrix models that are obtained by CMV-fying each of the (non-truncated) unitary ensembles. This is originally the idea of Killip--Nenciu~\cite{KN}, and we extend this to all the other unitary ensembles.

In Section~\ref{sModels2} we derive CMV matrix models for  each of the truncated ensembles. In Section~\ref{sEigenvalues} we compute the joint eigenvalue distribution of the truncations. In Section~\ref{sNonperfect} we discuss matrix models and compute the eigenvalue distribution for the non-ideally coupled ensembles.

In Section~\ref{sLoggas} we show that the eigenvalues of the truncated Dyson's (and, more generally, circular $\beta$-) ensembles admit a log-gas interpretation as charged particles located in a dielectric medium.

In Section~\ref{sSymmetricCMV} we introduce a symmetric variant of the CMV model, which allows us to find a canonical system of representatives of each conjugacy class that actually lie in the sets $COE$ and $CSE$; the traditional CMV matrix representation has no such symmetry properties. Finally, in Appendices we collect the information about quaternions, basics of the theory of orthogonal polynomials on the unit circle, and Jacobian determinants.


\section{Preliminaries}\label{sPreliminaries}

\subsection{Compact groups and Haar measure}\label{ssEnsembles}
As discussed in the Introduction, apart from Dyson's circular ensembles defined in Definition~\ref{DysonEnsembles}, we also study the classical compact groups. For the notation related to quaternions and quaternionic matrices, see Appendix~\ref{sQuaternions}.
\begin{definition} The unitary, orthogonal, special orthogonal, and compact symplectic groups are defined, respectively, by
\begin{align*}
 \bbU(n) &= \left\{ Q\in\bbC^{n\times n} : Q^\dagger Q = Q Q^\dagger = I_n    \right\}, \\
 \bbO(n)&= \left\{ Q\in\bbR^{n\times n} : Q^T Q = Q Q^T = I_n   \right\}, \\
 \bbSO(n)&= \left\{ Q\in\bbR^{n\times n} : Q^T Q = Q Q^T = I_n, \det Q =1   \right\}, \\
 \bbUSp(n)&= \left\{ Q\in\bbH_\bbR^{n\times n}  : \fC(Q)^\dagger \fC(Q) = \fC(Q) \fC(Q)^\dagger = I_{2n} \right\}.
\end{align*}
\end{definition}
Here $I_n$ is the $n\times n$ identity matrix; ${}^\dagger$ and ${}^T$ stands for the Hermitian conjugation and transposition, respectively.

By the general theory, each of these groups possesses the Haar measure, which is the unique (normalized by 1) measure that is invariant under the group action. We will use the same notation  $\bbU(n),\bbO(n),\bbSO(n),\bbUSp(n)$ for the group itself and for the random ensemble.

Next we describe the joint law of the columns of matrices from these groups.  This is well known, as is the method of proof we employ.  We give details only in the $\bbUSp$ case where the argument contains an additional subtlety.

\begin{proposition}\label{columns}
$(${\normalfont{a}}$)$ The first column $v_1$ of a matrix from  $\bbO(n)$  is distributed uniformly on the sphere $\{x\in\bbR^n : ||x|| =1 \}$.  For $2\le k\le n$, the $k$-th column $v_k$ is distributed uniformly on the subset of this unit sphere that is orthogonal to $v_1,\ldots,v_{k-1}$.

$(${\normalfont b}$)$ The first column $v_1$ of a matrix from $\bbU(n)$ is distributed uniformly on the sphere $\{ z\in\bbC^n:  ||z|| =1\}$. For $2\le k\le n$, the $k$-th column $v_k$ is distributed uniformly on the subset of this unit sphere that is orthogonal to $v_1,\ldots,v_{k-1}$.

$(${\normalfont  c}$)$ Let $Q\in\bbUSp(n)$. The first column $v_1$ of $\fC(Q)$ is distributed uniformly on the sphere $\{z\in\bbC^{2n}: ||z|| =1 \}$. The second column  $v_2$ is $Z\bar{v}_1$, where $Z$ is~\eqref{z}. For $2\le k\le n$, the $(2k-1)$-th column $v_{2k-1}$ is distributed uniformly on the subset of this unit sphere that is orthogonal to $v_1,\ldots,v_{2k-2}$, and $v_{2k}=Z\bar{v}_{2k-1}$.
\end{proposition}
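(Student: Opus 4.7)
The plan is to exploit left-invariance of the Haar measure, which is the standard approach for results of this kind. For parts (a) and (b), the distribution of $v_1 = Q e_1$ on the unit sphere in $\bbR^n$ (resp.\ $\bbC^n$) is invariant under the action of $\bbO(n)$ (resp.\ $\bbU(n)$) by left multiplication. Since these groups act transitively on the respective unit spheres, $v_1$ is uniformly distributed. The conditional law of $v_k$ given $v_1,\ldots,v_{k-1}$ is then obtained by restricting to the stabilizer subgroup, which is isomorphic to $\bbO(n-k+1)$ or $\bbU(n-k+1)$ and acts transitively on the unit sphere of the orthogonal complement of $\spann\{v_1,\ldots,v_{k-1}\}$.

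Part (c) requires more care because $\fC(Q)$ does not range over all of $\bbU(2n)$: the image $\fC(\bbUSp(n))$ consists of those unitaries satisfying $UZ = Z\bar{U}$, and this forces the pairing $v_{2k} = Z\bar{v}_{2k-1}$. I would first record the compatibility of this pairing with orthonormality: antisymmetry of $Z$ gives $v^\dagger Z \bar{v} = 0$ (the pairing of a symmetric tensor with an antisymmetric one), while $Z^\dagger Z = I$ gives $\|Z\bar{v}\| = \|v\|$. A short computation also yields $\langle Z\bar{v}, Z\bar{w}\rangle = \langle w, v\rangle$, so the operation $v \mapsto Z\bar{v}$ preserves (and reverses) orthogonality relations. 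Consequently, $(v, Z\bar{v})$ is an orthonormal pair whenever $v$ is a unit vector.

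With these compatibilities in hand, the left-invariance argument runs as before. The distribution of $v_1$ is invariant under the left action of $\fC(\bbUSp(n))$ on $\bbC^{2n}$, and this action is transitive on the unit sphere: given any unit $u \in \bbC^{2n}$, one extends $(u, Z\bar{u})$ to a full orthonormal basis of $\bbC^{2n}$ via a symplectic Gram--Schmidt procedure, picking at each step a unit vector orthogonal to everything chosen so far and then appending its $Z\bar{\cdot}$ companion; the compatibility above guarantees that the resulting basis assembles into a matrix in $\fC(\bbUSp(n))$. Hence $v_1$ is uniform on the unit sphere and $v_2 = Z\bar{v}_1$ is forced. For subsequent pairs, condition on $v_1,\ldots,v_{2k-2}$ and apply the same reasoning to the stabilizer, which is isomorphic to $\fC(\bbUSp(n-k+1))$ acting on the orthogonal complement.

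The main obstacle is the symplectic Gram--Schmidt extension used to prove transitivity; it rests on the orthogonality and norm identities for $v \mapsto Z\bar{v}$, so once those are verified the rest of the argument is parallel to (a) and (b). This is presumably why the paper singles out (c) as requiring an ``additional subtlety'' even though the overall strategy is classical.
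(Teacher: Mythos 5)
Your argument is correct, but it runs the invariance machinery in the opposite logical direction from the paper's. The paper builds the random matrix $V$ from the column-sampling procedure, verifies directly that $V\in\fC(\bbUSp(n))$ (the $v\perp Z\bar v$ identity giving unitarity and the block structure giving the quaternionic form), and then shows $V$ and $WV$ are equidistributed for each fixed $W\in\fC(\bbUSp(n))$; uniqueness of the Haar measure then forces the law of $V$ to be Haar. You instead start from Haar $Q$ and peel off one column at a time, using left-invariance plus transitivity of the group action on the unit sphere. Both rest on exactly the same two ingredients --- uniqueness of invariant measures on compact homogeneous spaces and the compatibility of $v\mapsto Z\bar v$ with the inner product; your $v^\dagger Z\bar v=0$, $\|Z\bar v\|=\|v\|$, $\langle Z\bar v,Z\bar w\rangle=\langle w,v\rangle$ are precisely what the paper uses implicitly, and your ``symplectic Gram--Schmidt'' transitivity check is the mirror image of the paper's verification that the constructed frame assembles into an element of the group. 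One step you should spell out: ``restricting to the stabilizer'' for the conditional law of $v_k$ is not a direct application of left-invariance, because the stabilizer of $v_1,\ldots,v_{k-1}$ varies with the conditioning event. The clean fix is right-invariance: $Q$ and $Qh_0$ are equidistributed for $h_0$ in the stabilizer $H_0$ of $e_1,\ldots,e_{k-1}$, so conditioning on $v_1,\ldots,v_{k-1}$ pins $Q$ to a single left coset $Q_0H_0$ on which the conditional law is (pushed-forward) Haar on $H_0\cong\fC(\bbUSp(n-k+1))$, and $v_k=Q_0h\,e_{2k-1}$ is then uniform on the sphere in $\spann\{v_1,\ldots,v_{2k-2}\}^\perp$. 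The paper's direction avoids conditional laws altogether, which is part of its appeal, though your direction has the advantage of answering the question posed (``what is the law of the columns?'') rather than its converse.
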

\begin{proof}
As noted above, we give details only for the  case of $\bbUSp(n)$.

Form $V$ with columns $v_1,\ldots,v_{2n}$ described by the procedure in the Proposition, and let us show that $V$ is indeed $\fC(Q)$ for a $Q\in\bbUSp(n)$. By the construction, $v_1,\ldots,v_{2n}$ form an orthonormal basis of $\bbC^{2n}$ since $v \perp Z \bar{v}$ holds for any vector $v$. Thus $V$ is unitary. Moreover, $V$ consists of $n^2$ blocks each of which has the real quaternionic form, see~\eqref{E:RSU(2)}. Therefore $\fC^{-1}(V)$ belongs to the \textit{group} (not \textit{ensemble} yet) $\bbUSp(n)$. Let us show that for any non-random $W$ chosen from the group $\fC(\bbUSp(n))$, the distribution of $V$ and $W V$ are identical. This will prove that $\fC^{-1}(V)$ was generated according to the Haar measure by the uniqueness of the measure invariant with respect to the group action on $\bbUSp(n)$.

Denote the columns of $W V$ by $u_1,\ldots,u_{2n}$. First note that $u_1=W v_1$ is a vector uniformly distributed on the unit sphere $\{z\in\bbC^{2n}:  ||z|| =1 \}$ since $v_1$ is. The second column $u_2=W v_2 = W Z\bar{v}_1 = Z\overline{W v_1} = Z\bar{u}_1$ (we used~\eqref{realQuatDuality}), which agrees with our construction. Now inductively, consider $u_{2k-1}$, which is orthogonal to $u_1,\ldots,u_{2k-2}$. Consider any $2n\times 2n$ unitary $U$ that maps $\operatorname{span}\{u_1,\ldots,u_{2k-2}\}$ to itself. Then $W^{-1} U W$ maps $\operatorname{span}\{v_1,\ldots,v_{2k-2}\}$ to itself. By the construction this means that $v_{2k-1}$ and $W^{-1} U W v_{2k-1}$ are distributed identically. Thus $u_{2k-1}=W v_{2k-1}$ and $U u_{2k-1} = U W v_{2k-1}$ are distributed identically. Since $U$ was arbitrary, we conclude that $u_{2k-1}$ was distributed uniformly on the subspace of $\bbC^{2n}$ orthogonal to $u_1,\ldots,u_{2k-2}$.

Finally, $u_{2k}=Z\bar{u}_{2k-1}$ as before, and this completes the induction.
\end{proof}

\subsection{CMV matrices}\label{ssCMV}



Let us now give the definition of CMV matrices. They are named after Cantero--Moral--Velasquez~\cite{CMV} (also discovered earlier by Bunse-Gerstner--Elsner~\cite{CMV_BGE}). 
For a comprehensive discussion, see~\cite{S_CMV}.

\begin{definition}\label{CMV}
Given coefficients $\alpha_0,\ldots,\alpha_{n-1}$ with $\alpha_k \in \bbD$ for $0\le k \le n-2$ and $\alpha_{n-1}\in\overline{\bbD}$, let $\rho_k=\sqrt{1-|\alpha_k|^2}$, and define
\begin{equation}\label{CMV1}
\Xi_k=
\left[
\begin{array}{cc}
\bar{\alpha}_k & \rho_k \\
\rho_k & -\alpha_k
\end{array}
\right]
\end{equation}
for $0\le k\le n-1$, while $\Xi_{-1}=\left[ 1\right]$ and $\Xi_{n-1}=\left[ \bar{\alpha}_{n-1}\right]$. From these, form the $n\times n$ block-diagonal matrices
\begin{equation}\label{CMV2}
\calL = \operatorname{diag}\left( \Xi_0,\Xi_2,\Xi_4,\ldots\right),   \qquad
\calM = \operatorname{diag}\left( \Xi_{-1},\Xi_1,\Xi_3,\ldots\right).
\end{equation}
The matrix
\begin{equation*}\label{CMV3}
\calC(\alpha_0,\ldots,\alpha_{n-1}):=\calL \calM
\end{equation*}
is called the \textbf{CMV matrix} corresponding to $\alpha_0,\ldots,\alpha_{n-1}$.
\end{definition}
\begin{remark}
Note that $\calC(\alpha_0,\ldots,\alpha_{n-1})$ is unitary if and only if $\alpha_{n-1}\in\partial\bbD$. If $\alpha_{n-1}\in \bbD$, then it can be shown that $\calC(\alpha_0,\ldots,\alpha_{n-1})$ is a subunitary operator ($\calC^\dagger \calC \le 1$) with defect operator of order $1$ (see~\cite{bSzNagyFoias} as well as~\cite{AGT}). Usually the name ``CMV matrix'' is reserved for the cases when $\alpha_{n-1}\in \partial\bbD$, and if $\alpha_{n-1}\in \bbD$ then these matrices are referred to as the ``cut-off CMV matrices''. In this paper we will use the term ``CMV matrix'' for all $\alpha_{n-1}\in\overline{\bbD}$. 
\end{remark}

Let us now define what we will refer to here as the CMV-fication algorithm. Suppose we are given an $n\times n$ unitary matrix $U$ (with complex entries). If the vector $e_1:= [1,0,0,\ldots,0]^T$ is cyclic for $U$ then (see~\cite{CMV_BGE,CMV,S_CMV}) applying the Gram--Schmidt orthonormalization procedure to $e_1, U e_1, U^{-1} e_1,U^2 e_1, U^{-2} e_1,\ldots$ produces a basis in which $U$ has the CMV form $\calC$ with $\alpha_{n-1}\in\partial\bbD$.

\smallskip

Similar construction works if $U$ is instead a unitary  operator on $\ell^2(\bbZ_+)$ with the cyclic vector $e_1$. Then one ends up with an infinite CMV matrix $\calC(\alpha_0,\alpha_1,\ldots)$ with $\alpha_j\in\bbD$ for all $j\ge0$. This matrix is defined as $\calL \calM$, where $\calL$ and $\calM$ are semi-infinite block-diagonal matrices~\eqref{CMV2}, where $\Xi_k$ are as in \eqref{CMV1} for all $k\ge1$. 

\smallskip

Recall that by the spectral theorem, if $U$ is a unitary operator on $\ell^2(\bbC^n)$ or on $\ell^2(\bbZ_+)$ with a cyclic vector $e_1$, then its spectral measure is defined to be the probability measure on $\partial\bbD$ uniquely determined by
\begin{equation}\label{spectralMeasure}
\langle  e_1,U^m  e_1 \rangle = \int_0^{2\pi} e^{im\theta} d\mu(\theta),
\end{equation}
where $\langle v,u \rangle = v^\dagger u$. In fact, we have an isometry $\imath$ of Hilbert spaces $L^2(\mu)$ and $\ell^2(\bbC^n)$ or $\ell^2(\bbZ_+)$ defined by
\begin{equation}\label{isometry}
\imath: z^m \mapsto U^m e_1, \quad m\in\bbZ.
\end{equation}

\smallskip

Now note that during the Gram-Schmidt procedure above, $e_1$ was not changed, and therefore our CMV-fication algorithm can be described as
\begin{equation}\label{W}
U=W \calC W^\dagger
\end{equation}
for some unitary $W$ with $W e_1=W^\dagger e_1 = e_1$. This means that not only the eigenvalues of  $U$ and $\calC$ coincide, but also that the spectral measures~\eqref{spectralMeasure} of $U$ and $\calC$ with respect to $e_1$ are identical. In fact, two unitary CMV matrices with identical spectral measures necessarily coincide, see~\cite[Prop 3.3]{KN_CMV}.

Thus from the physical point of view, one may want to think that given a physical system modeled by a unitary matrix $U$ with a chosen vector $e_1$ (viewed as the initial state of the system) there is a unique choice of basis in which the matrix attains the ``minimal complexity'' CMV form.

Note however that in the case of systems with time-reversal invariance ($COE$ and $CSE$) the associated CMV form will not possess the same symmetry properties. In Section~\ref{sSymmetricCMV}, we introduce a variant of the CMV form that does possess the requisite symmetry.  Moreover, the matrix used to conjugate to this new canonical form belongs to the othogonal/symplectic group.

\smallskip

From the mathematical point of view, one of the advantages of reducing a unitary matrix or operator $U$ to the CMV form, apart from the latter being sparse and having the same the spectral measure, is that there is a direct connection to the theory of orthogonal polynomials on the unit circle. Indeed, characteristic polynomials of $\calC(\alpha_0,\ldots,\alpha_{k-1})$, $k=1,2,\ldots$, coincide with the orthogonal polynomials $\Phi_k$ associated to the spectral measure $\mu$ of $U$ with respect to $e_1$ (see Appendix~\ref{sOPUC}):
\begin{equation}\label{characteristicPoly}
\det(z I_k-\calC(\alpha_0,\ldots,\alpha_{k-1}))=\Phi_k(z).
\end{equation}
In particular, this means that these polynomials satisfy Szeg\H{o}'s recurrence~\eqref{OPUC}. We stress that the recurrence coefficients in~\eqref{OPUC} are precisely the coefficients from the CMV matrix. We will refer to them as the Verblunsky coefficients.

We collect all the needed facts from the theory of orthogonal polynomials in Appendix~\ref{sOPUC}. For more details the reader is referred to the monographs~\cite{OPUC1,OPUC2}.

\subsection{Block CMV matrices and matrix-valued spectral measures}
For the quaternionic ensembles $CSE(n)$ and $\bbUSp(n)$ we will have to deal with the $2\times 2$ matrix-valued spectral measures and with block CMV forms which we define next.

Recall that $\mu$ is an $l\times l$ matrix-valued probability measure on $\partial\bbD$ if it is a countably additive mapping from Borel subsets of $\partial\bbD$ to positive semi-definite $l\times l$ matrices, with the normalization condition $\mu(\partial\bbD)=I_l$. The $l\times l$ spectral measure of an $ln \times ln$ unitary matrix $U$ with respect to vectors $e_1,e_2,\ldots, e_l$ is the
$l \times l$ matrix-valued measure $d\mu$ with entries $d\mu_{jk}$ ($1\le j,k \le l$) determined by
\begin{equation}\label{matrixSpectralMeasure}
\langle  e_j,U^m e_k \rangle = \int_0^{2\pi} e^{im\theta} d\mu_{jk}(\theta).
\end{equation}

Block CMV matrices can be defined as before, by replacing the scalar Verblunsky coefficients $\alpha_j\in\bar\D$ by  $l \times l$ matrices satisfying $\alpha^\dagger \alpha \le I_l$ and $\rho_j = (I_l-\alpha_j^\dagger \alpha_j)^{1/2}$.

Under the natural cyclicity condition, any unitary $nl\times nl$ matrix $U$ is unitarily equivalent to a block CMV matrix $W^\dagger UW=\calC(\alpha_0,\ldots,\alpha_{n-1})$ with the last coefficient satisfying $\alpha_{n-1}^\dagger \alpha_{n-1} = I_{l\times l}$. Moreover, the $l\times l$ matrix-valued spectral measure~\eqref{matrixSpectralMeasure} with respect to $e_1,e_2,\ldots, e_l$ is preserved (which is to say, $W$ fixes $e_1,\ldots e_l$). 

Now, given a matrix $Q$ from $CSE(n)$ or $\bbUSp(n)$, by its spectral measure we will call the $2\times 2$ matrix-valued spectral measure of the $2n\times 2n$ complex matrix $\fC(Q)$ with respect to $e_1,e_2$. The matrix $\fC(Q)$ can be reduced (with probability $1$) to $\calC(\alpha_0,\ldots,\alpha_{n-1})$ with $2\times 2$ Verblunsky coefficients $\alpha_j$. The $2\times 2$ spectral measures of $\fC(Q)$ and $\calC$ with respect to $e_1,e_2$ coincide.

\section{Spectral Measures of Unitary Ensembles}\label{sSpectral}

In this section we will derive the distribution of the random spectral measures of each of the ensembles defined above.

Let us denote the Vandermonde determinant by
\begin{equation*}\label{Vandermonde}
\Delta(z_1,\ldots,z_n) = \prod_{1\le j < k \le n} (z_k-z_j).
\end{equation*}

\subsection{Dyson's circular ensembles}

\begin{proposition}\label{propSpecCircular}
The spectral measure of $COE(n)$ and of $CUE(n)$ is
\begin{equation}\label{finiteSpectralMeasure}
\sum_{j=1}^n \mu_j \delta_{e^{i\theta_j}},
\end{equation}
where
\begin{align}\label{domain1}
&0\le \theta_j < 2\pi \quad (1\le j \le n), \\
\label{domain2}
&\sum_{j=1}^{n} \mu_j =1, \quad \mu_j > 0 \quad (1\le j \le n),
\end{align}
and the joint distribution of $e^{i\theta_1},\ldots,e^{i\theta_n},\mu_1,\ldots,\mu_{n-1}$ is
\begin{equation}\label{circular}
\tfrac{1}{Z_{n,\beta}} \left| \Delta(e^{i\theta_1},\ldots,e^{i\theta_n}) \right|^\beta \frac{d\theta_1}{2\pi} \ldots \frac{d\theta_n}{2\pi} \times \tfrac{1}{Z'_{n,\beta}} \prod_{j=1}^n \mu_j^{\beta/2-1} d\mu_1\ldots d\mu_{n-1}
\end{equation}
with $\beta=1$ for $COE$ and $\beta=2$ for $CUE$. Here 
$Z_{n,\beta}, Z'_{n,\beta}$ are the normalization constants expressed in terms of the gamma function  as
$$
Z_{n,\beta} = \frac{\Gamma(\tfrac12 \beta n+1)}{\left[\Gamma(\tfrac12 \beta+1)\right]^n}, \quad Z'_{n,\beta} = \frac{\left[\Gamma(\tfrac12 \beta)\right]^n}{\Gamma(\tfrac12 \beta n)}.
$$

The $2\times 2$ spectral measure of $CSE(n)$ is
\begin{equation}\label{CSEmeasure}
\sum_{j=1}^n \begin{bmatrix} \mu_j & 0 \\ 0 & \mu_j \end{bmatrix} \delta_{e^{i\theta_j}},
\end{equation}
where $\mu_j$ and $\theta_j$ follow the law \eqref{circular} with $\beta=4$ on the domain \eqref{domain1}--\eqref{domain2}.
\end{proposition}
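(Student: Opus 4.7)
The plan is to decompose each random matrix into eigenvector/eigenvalue data, argue via group invariance that the eigenvector factor is Haar-distributed on the appropriate compact group independently of the eigenvalues, and then read off the weight distribution from Proposition~\ref{columns}. The eigenvalue marginal in~\eqref{circular} is the classical Weyl/Dyson density, which I would treat as known and concentrate on the joint distribution of the weights together with the independence claim.

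For $CUE(n)$, I would write the spectral decomposition $U = V D V^\dagger$ with $V \in \bbU(n)$ and $D = \diag(e^{i\theta_1}, \ldots, e^{i\theta_n})$. Bi-invariance of Haar measure on $\bbU(n)$ under $U \mapsto W U W^\dagger$ shows that, conditional on $D$, one has $V \sim WV$ for every $W \in \bbU(n)$; hence $V$ is Haar on $\bbU(n)$ modulo the diagonal torus stabilizer and independent of $D$. Proposition~\ref{columns}(b) then forces the first column of $V$ to be uniform on the unit sphere in $\bbC^n$, so $(\mu_j) = (|V_{1j}|^2)$ is Dirichlet$(1,\ldots,1)$, matching $\prod \mu_j^{\beta/2-1}$ at $\beta = 2$. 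For $COE(n)$, I would first observe that any symmetric unitary $S$ satisfies $S \bar\psi = \lambda \bar\psi$ whenever $S \psi = \lambda \psi$, so each generic eigenspace is spanned by a real vector and $S = V D V^T$ with $V \in \bbO(n)$. The $\bbO(n)$-conjugation invariance of the COE law (from right $\bbO(n)$-invariance of Haar on $\bbU(n)$, via $(UO)^T(UO) = O^T S O$) then gives $V \sim O^T V$ conditional on $D$, so $V$ is Haar on $\bbO(n)$ modulo $\{\pm 1\}^n$ and independent of $D$. Proposition~\ref{columns}(a) yields the first column of $V$ uniform on the real unit sphere, so $(\mu_j) = (V_{1j}^2)$ is Dirichlet$(1/2,\ldots,1/2)$, matching $\beta=1$.

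For $CSE(n)$ the same strategy should run quaternionically. Self-duality and unitarity of $Q$ force $\fC(Q) = \fC(V) \fC(D) \fC(V)^\dagger$ with $V \in \bbUSp(n)$, each generically two-dimensional eigenspace spanned by a Kramers pair $\psi_{2l-1},\;\psi_{2l} = Z \bar \psi_{2l-1}$, whence each eigenvalue has multiplicity $2$. $\bbUSp(n)$-conjugation invariance of the CSE law makes $V$ Haar on $\bbUSp(n)$ independently of $D$, and Proposition~\ref{columns}(c) applied to $V^\dagger$ gives the first row of $\fC(V)$ uniform on the unit sphere in $\bbC^{2n}$. A short direct computation using $\psi_{2l} = Z \bar \psi_{2l-1}$ should then show both that the off-diagonal entries of the $2\times 2$ spectral measure vanish (giving the diagonal form~\eqref{CSEmeasure}) and that the diagonal weight at $e^{i\theta_l}$ equals $|\fC(V)_{1,2l-1}|^2 + |\fC(V)_{1,2l}|^2$. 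Pooling consecutive coordinates converts Dirichlet$(1,\ldots,1)$ with $2n$ parameters into Dirichlet$(2,\ldots,2)$, matching $\prod \mu_j^{\beta/2-1} = \prod \mu_j$ at $\beta = 4$.

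The steps I expect to be the main obstacles are: carefully accounting for the stabilizer subgroups (the diagonal torus for CUE, $\{\pm 1\}^n$ for COE, a block-symplectic subgroup for CSE) so that ``Haar modulo stabilizer'' has a well-defined push-forward to the weights; the quaternionic bookkeeping in the CSE case, in particular verifying both the diagonal structure of the $2\times 2$ spectral measure and that the diagonalizing $V$ genuinely lies in $\bbUSp(n)$ as a consequence of self-duality of $Q$; and matching the normalization constants $Z_{n,\beta}$ and $Z'_{n,\beta}$ against the standard Weyl and Dirichlet normalizations.
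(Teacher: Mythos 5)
Your proposal follows essentially the same route as the paper: spectral decomposition, group-conjugation invariance to make the eigenvector factor Haar-distributed (conditionally on $D$, hence independent of it), Proposition~\ref{columns} for the uniform first column/row, and the sphere-coordinate lemma to read off the Dirichlet weight law. The one technical point you flag as an ``obstacle'' — the stabilizer subgroup — the paper resolves cleanly by multiplying the eigenvector matrix on the right by an independent random diagonal $S$ (signs $\pm1$ for COE, unimodular real quaternions for CSE), which makes the eigenvector map well-defined and yields a matrix that is exactly Haar on $\bbO(n)$ or $\bbUSp(n)$, rather than ``Haar modulo stabilizer.''
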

\begin{remark}
In fact, the probability distribution of the eigenvalues in~\eqref{circular} with any $0<\beta<\infty$ has a natural interpretation as the Gibbs distribution for the classical Coulomb gas on the circle at the inverse temperature $\beta$. This distribution for any $0<\beta<\infty$ was realized as the eigenvalue distribution of the family of random matrix models called circular $\beta$-ensembles in Killip--Nenciu~\cite{KN}. See Proposition~\ref{thmDysonCMV} below.
\end{remark}
\begin{proof}
The distribution of the eigenvalues is well-known (see Dyson's original paper~\cite{Dyson}).

Distribution of the eigenweights for $\beta=2$ was computed in~\cite[Proposition~3.1]{KN}.

Let us compute the distributions of the eigenweights for $\beta = 1$. Suppose $U\in COE(n)$, and let its eigenvalues $e^{i\theta_1},\ldots,e^{i\theta_n}$ be given.
It can diagonalized by a \textit{real} orthogonal matrix $U=ODO^T$  (see~\cite[Thm~4]{Dyson}; or, more generally,~\cite[Thm~4.4.7]{bHornJohnson}). 
The matrix $O$ of eigenvectors is unique up to a multiplication of it by a diagonal matrix $S$ with $\pm1$ on its diagonal. If we choose each of these signs by random independently with equal probability, then the map $U\mapsto OS$ becomes well defined, and $OS$ becomes a Haar distributed orthogonal matrix. Indeed, the invariance property of $COE(n)$ (\cite[Thm~10.1.1]{bMehta}) implies that the distribution of $OS$ and $WOS$ are identical for any orthogonal $W$, and now we can apply the uniqueness of the Haar measure on $\bbO(n)$.
Now Proposition~\ref{columns}(a) says that the first row $x=(x_1,\ldots,x_n)$ of $OS$ is uniformly distributed on the unit sphere $S^{n-1}=\{{x} \in\bbR^{n}: ||x||=1 \}$. But by~\eqref{spectralMeasure} and~\eqref{finiteSpectralMeasure}, $\mu_j = x_j^2$. Now Lemma~\ref{lemSphere}(a) finishes the proof.

A similar approach works for $\beta=4$. Let $U\in CSE(n)$. Suppose the $2n$ eigenvalues of $\fC(U)$ are $e^{i\theta_1},e^{i\theta_1},\ldots,e^{i\theta_n},e^{i\theta_n}$ (each repeated twice). By~\cite[Thm~3]{Dyson} $U$ can be reduced to the diagonal form $U=B D B^R$ by a symplectic matrix $B\in\bbUSp(n)$, where $\fC(D)$ is the $2n\times 2n$ complex diagonal matrix with the diagonal $(e^{i\theta_1},e^{i\theta_1} ,\ldots,e^{i\theta_n},e^{i\theta_n})$. $B$ is defined uniquely up to multiplication by a diagonal matrix $S$ with unimodular real quaternions on its diagonal. If we choose those by random independently and uniformly on $\bbUSp(2)$, then by the invariance property of $CSE(n)$ (see~\cite[Thm~10.2.1]{bMehta}), $BS$ becomes Haar distributed random matrix from $\bbUSp(n)$.

Denote the first quaternionic column of $(BS)^R$ by $[q_1,\ldots,q_n]^T$ with
$$
\fC(q_j)=
\begin{bmatrix} \alpha_j & -\bar\beta_j \\ \beta_j & \bar{\alpha}_j\end{bmatrix}.
$$
with $\alpha_j,\beta_j\in\bbC$, $1\le j \le n$.
Now $U^m=(B S) D^m (BS)^R$ and a simple calculation implies
\begin{align*}
\langle  e_1,\fC(U)^m  e_1 \rangle &= \langle  e_2,\fC(U)^m  e_2 \rangle = \sum_{j=1}^n (|\alpha_j|^2+|\beta_j|^2) e^{im\theta_j}, \\
\langle  e_1,\fC(U)^m  e_2 \rangle &= \langle  e_2,\fC(U)^m  e_1 \rangle = 0.
\end{align*}
Therefore the $2\times 2$ spectral measure~\eqref{matrixSpectralMeasure} of $U$ is indeed of the form~\eqref{CSEmeasure} with $\mu_j = |\alpha_j|^2+|\beta_j|^2$. By Proposition~\ref{columns}(c), the first complex column of $\fC((BS)^R)$, that is $[\alpha_1,\beta_1,\ldots,\alpha_n,\beta_n]^T$, is uniformly distributed on the unit sphere of $\bbC^{2n}$. Then by Lemma~\ref{lemSphere}(e), $\mu_j$'s are distributed according to $(2n-1)! \prod_{j=1}^n \mu_j d\mu_1\ldots d\mu_{n-1}$ on~\eqref{domain2} as required.
\end{proof}

\subsection{Orthogonal group}
\begin{proposition}\label{thmOSpectr}
\begin{itemize}
\item[(a)]
The spectral measure of $\bbSO(2n)$ is
\begin{equation}\label{formSpectralMeasureO1}
\sum_{j=1}^n \tfrac12 \mu_j \left( \delta_{e^{i\theta_j}}+\delta_{e^{-i\theta_j}} \right),
\end{equation}
where
\begin{align}\label{domainO1}
&0 < \theta_j < \pi \quad (1\le j \le n), \\
\label{domainO2}
&\sum_{j=1}^{n} \mu_j =1, \quad \mu_j > 0 \quad (1\le j \le n),
\end{align}
 and the joint distribution of $e^{i\theta_1},\ldots,e^{i\theta_n},\mu_1,\ldots,\mu_{n-1}$ is
\begin{equation*}\label{SOeven}
\tfrac{1}{2^{n-1} n! \pi^n} \left| \Delta(2\cos\theta_1,\ldots,2\cos\theta_n) \right|^2 \, d\theta_1 \ldots d\theta_n 
\times (n-1)! d\mu_1\ldots d\mu_{n-1}.
\end{equation*}
\item[(b)] The spectral measure of $\bbO(2n)\setminus\bbSO(2n)$ is
\begin{equation}\label{formSpectralMeasureO2}
\sum_{j=1}^{n-1} \tfrac12\mu_j \left( \delta_{e^{i\theta_j}}+\delta_{e^{-i\theta_j}} \right) +\mu_{n} \delta_{1}+\mu_{n+1} \delta_{-1},
\end{equation}
where
\begin{align}\label{domainO3}
&0 < \theta_j < \pi \quad (1\le j \le n-1), \\
\label{domainO4}
&\sum_{j=1}^{n+1} \mu_j =1, \quad \mu_j > 0 \quad (1\le j \le n+1),
\end{align}
 and the joint distribution of $e^{i\theta_1},\ldots,e^{i\theta_{n-1}},\mu_1,\ldots,\mu_{n}$ is
\begin{multline*}\label{SOeven2}
\tfrac{2^{n-1}}{(n-1)!\pi^{n-1}}  \left| \Delta(2\cos\theta_1,\ldots,2\cos\theta_{n-1}) \right|^2 \prod_{j=1}^{n-1} \sin^2\theta_j \, d\theta_1 \ldots d\theta_{n-1} 
 \\
  \times
\tfrac{(n-1)!}{\pi}
  \tfrac{1}{\sqrt{\mu_n \mu_{n+1}}}  d\mu_1\ldots d\mu_n.
\end{multline*}
\item[(c)] The spectral measure of $\bbSO(2n+1)$ is
\begin{equation}\label{formSpectralMeasureO3}
\sum_{j=1}^n \tfrac12\mu_j \left( \delta_{e^{i\theta_j}}+\delta_{e^{-i\theta_j}} \right) +\mu_{n+1} \delta_1,
\end{equation}
with \eqref{domainO1}, \eqref{domainO4},
 and the joint distribution of $e^{i\theta_1},\ldots,e^{i\theta_n},\mu_1,\ldots,\mu_{n}$ is
\begin{equation*}\label{SOodd}
\tfrac{2^n}{n!\pi^n}  \left| \Delta(2\cos\theta_1,\ldots,2\cos\theta_n) \right|^2 \prod_{j=1}^n \sin^2\bigl(\tfrac{\theta_j}{2}\bigr) \, d\theta_1 \ldots d\theta_n
  \times
\tfrac{\Gamma(n+\tfrac{1}{2})}{\Gamma(\tfrac12)}
 \tfrac{1}{\sqrt{\mu_{n+1}}}  d\mu_1\ldots d\mu_n.
\end{equation*}
\item[(d)] The spectral measure of $\bbO(2n+1)\setminus\bbSO(2n+1)$ is
\begin{equation}\label{formSpectralMeasureO4}
\sum_{j=1}^n \tfrac12\mu_j \left( \delta_{e^{i\theta_j}}+\delta_{e^{-i\theta_j}} \right) +\mu_{n+1} \delta_{-1},
\end{equation}
with \eqref{domainO1}, \eqref{domainO4}, and the joint distribution of $e^{i\theta_1},\ldots,e^{i\theta_n},\mu_1,\ldots,\mu_{n}$ is
\begin{equation*}\label{SOodd2}
\tfrac{2^n}{n!\pi^n}  \left| \Delta(2\cos\theta_1,\ldots,2\cos\theta_n) \right|^2 \prod_{j=1}^n \cos^2\bigl(\tfrac{\theta_j}{2}\bigr) \, d\theta_1 \ldots d\theta_n
  \times
\tfrac{\Gamma(n+\tfrac{1}{2})}{\Gamma(\tfrac12)}
 \tfrac{1}{\sqrt{\mu_{n+1}}}  d\mu_1\ldots d\mu_n.
\end{equation*}
\end{itemize}
\end{proposition}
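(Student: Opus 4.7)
The plan is to mirror the strategy used for $COE$ and $CSE$ in the proof of Proposition~\ref{propSpecCircular}: write $Q$ in a real canonical block form, use Haar-invariance together with an independent uniform randomization over the centralizer of $\Lambda$ to upgrade the orthogonalizer $O$ itself to a Haar-distributed element on the appropriate component of $\bbO(m)$, and then read off the eigenweights from its first row by means of Proposition~\ref{columns}(a) and Lemma~\ref{lemSphere}.

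For each of the four cases I begin with the real canonical decomposition $Q=O\Lambda O^T$, where $O\in\bbO(m)$ and $\Lambda$ is block-diagonal with $2\times 2$ rotation blocks $R(\theta_j)=\bigl(\begin{smallmatrix}\cos\theta_j & -\sin\theta_j\\ \sin\theta_j & \cos\theta_j\end{smallmatrix}\bigr)$ for each complex-conjugate pair $e^{\pm i\theta_j}$, together with the $\pm 1$ scalar entries forced by the parity of $m$ and the sign of $\det Q$. With probability one the $\theta_j$ are distinct and lie in $(0,\pi)$, so the centralizer of $\Lambda$ in $\bbO(m)$ is the abelian group $\bbSO(2)^k\times\{\pm 1\}^\ell$, with one $\bbSO(2)$ factor per rotation block and one sign per forced real eigenvalue. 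Replacing $O$ by $OT$ for an independent uniform $T$ on this centralizer removes the orthogonalizer ambiguity; the standard invariance argument used in the $COE$ case of Proposition~\ref{propSpecCircular}, combined with uniqueness of Haar measure, then shows that $OT$ is Haar-distributed on the connected component of $\bbO(m)$ to which $Q$ belongs. By Proposition~\ref{columns}(a) (or, for the cosets $\bbO(m)\setminus\bbSO(m)$, the analogous statement established by the same transitivity argument), the first row $r=(r_1,\ldots,r_m)$ of $OT$ is uniformly distributed on $S^{m-1}$.

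From $\langle e_1, Q^p e_1\rangle=r^{T}\Lambda^{p}r$ a direct block-by-block computation gives that each rotation block contributes weight $\tfrac{1}{2}(r_{2j-1}^2+r_{2j}^2)$ to each of the points $e^{\pm i\theta_j}$, while a scalar $\pm 1$ block in position $k$ contributes weight $r_k^2$ at that eigenvalue. Setting $\mu_j=r_{2j-1}^2+r_{2j}^2$ for the paired blocks (and the obvious single-coordinate analogues for the real eigenvalues), the spectral measure takes exactly the form asserted in \eqref{formSpectralMeasureO1}--\eqref{formSpectralMeasureO4}. Applying Lemma~\ref{lemSphere} to the uniform law on $S^{m-1}$ identifies the joint distribution of these weights as the Dirichlet distribution with parameters $(1,\ldots,1)$ in case (a), $(1,\ldots,1,\tfrac12,\tfrac12)$ in case (b), and $(1,\ldots,1,\tfrac12)$ in cases (c) and (d); the Dirichlet normalizing constant $\Gamma\bigl(\sum a_i\bigr)/\prod\Gamma(a_i)$ then reproduces the combinatorial prefactors $(n-1)!$, $(n-1)!/\pi$, and $\Gamma(n+\tfrac12)/\Gamma(\tfrac12)$ in the statement exactly.

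Finally, the angular densities are the classical Weyl integration formulas for the four groups: after pulling out the forced $\pm 1$ eigenvalues, the Weyl denominator contributes the Vandermonde $|\Delta(2\cos\theta_1,\ldots,2\cos\theta_n)|^2$, together with the extra factors $\prod\sin^2\theta_j$ in case (b), $\prod\sin^2(\theta_j/2)$ in case (c), and $\prod\cos^2(\theta_j/2)$ in case (d). The main technical subtlety I expect is the centralizer randomization step: because the sign factors in $\{\pm 1\}^\ell$ can flip $\det(OT)$, one has to match the randomization to the fixed determinant class of $Q$ so that $OT$ lands in the correct connected component of $\bbO(m)$ rather than the whole group; the argument is essentially the one used in the $COE$ proof, but the determinant bookkeeping across the four cases is somewhat delicate.
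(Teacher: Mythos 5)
Your proposal is correct and follows essentially the paper's own route: reduce to the real block-diagonal canonical form, upgrade the orthogonalizer via Haar-conjugation invariance and centralizer randomization (exactly as in the $COE$ case of Proposition~\ref{propSpecCircular} and the $\bbSO(2n)$ case treated in Killip--Nenciu), so that its first row is uniform on the sphere, and then read off the eigenweights and apply Lemma~\ref{lemSphere}(b),(c),(d) together with the Weyl integration formula for the angles. One small quibble: the centralizer-randomized matrix $OT$ is naturally Haar on all of $\bbO(m)$ rather than on ``the component of $Q$'' (in (a) the centralizer $\bbSO(2)^n$ cannot adjust $\det O$ to match $\det Q$, while in (b)--(d) the sign factors mix the two components), so no determinant bookkeeping is actually needed --- and this is harmless, since the first row is uniform on $S^{m-1}$ in either case.
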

\begin{proof}
The distributions of the eigenvalues is well-known and can be found in, e.g.,~\cite{bWeyl} and~\cite{bPastur}, so we just need to justify the distribution of the eigenweights.

(a) is proven in~\cite[Prop 3.4]{KN}.

(b), (c), and (d) can be proved in the similar fashion. Indeed, one can reduce a random matrix to the block diagonal
\begin{align*}
&\operatorname{diag}\left( \begin{bmatrix} \cos\theta_1 & \sin\theta_1 \\ -\sin\theta_1 & \cos\theta_1 \end{bmatrix}, \ldots , \begin{bmatrix} \cos\theta_{n-1} & \sin\theta_{n-1} \\ -\sin\theta_{n-1} & \cos\theta_{n-1} \end{bmatrix} ,\begin{bmatrix} 1  \end{bmatrix},\begin{bmatrix} -1 \end{bmatrix} \right), \\
&\operatorname{diag}\left( \begin{bmatrix} \cos\theta_1 & \sin\theta_1 \\ -\sin\theta_1 & \cos\theta_1 \end{bmatrix}, \ldots , \begin{bmatrix} \cos\theta_n & \sin\theta_n \\ -\sin\theta_n & \cos\theta_n \end{bmatrix},\begin{bmatrix} 1 \end{bmatrix} \right), \\
&\operatorname{diag}\left( \begin{bmatrix} \cos\theta_1 & \sin\theta_1 \\ -\sin\theta_1 & \cos\theta_1 \end{bmatrix}, \ldots , \begin{bmatrix} \cos\theta_n & \sin\theta_n \\ -\sin\theta_n & \cos\theta_n \end{bmatrix} ,\begin{bmatrix} -1 \end{bmatrix} \right),
\end{align*}
respectively.

This shows that the vector of eigenweights $(\mu_1,\ldots,\mu_{n+1})$ for the cases (b), (c), and (d) is distributed as in Lemma~\ref{lemSphere} (c), (d), (d), respectively, which finishes the proof.
%
%
\end{proof}

\subsection{Compact symplectic unitary group}
\begin{proposition}
The $2\times 2$ spectral measure of $\bbUSp(n)$ is
\begin{equation}\label{USpmeasure}
\sum_{j=1}^n W_j \delta_{e^{i\theta_j}} +
\sum_{j=1}^n W_j^R \delta_{e^{-i\theta_j}},
\end{equation}
where
\begin{equation*}\label{matrixWeights}
W_j=\begin{bmatrix} \mu_j & \sqrt{\mu_j \nu_j} \, e^{i\psi_j} \\ \sqrt{\mu_j \nu_j} \, e^{-i\psi_j} & \nu_j \end{bmatrix},
\quad
W_j^R=\begin{bmatrix} \nu_j & - \sqrt{\mu_j \nu_j} \, e^{i\psi_j} \\ -\sqrt{ \mu_j \nu_j} \, e^{-i\psi_j} & \mu_j \end{bmatrix}.
\end{equation*}
Here
\begin{equation*}
\begin{aligned}
&0 < \theta_j < \pi \quad (1\le j \le n), \\
&0\le \psi_j < 2\pi \quad (1\le j \le n), \\
&\sum_{j=1}^{n} \mu_j + \sum_{j=1}^{n} \nu_j =1, \quad \mu_j > 0, \nu_j > 0 \quad (1\le j\le n),
\end{aligned}
\end{equation*}
and the joint distribution of $e^{i\theta_1},\ldots,e^{i\theta_n},e^{i\psi_1},\ldots,e^{i\psi_n},\mu_1,\ldots,\mu_{n},\nu_1,\ldots,\nu_{n-1}$ is
\begin{multline*}
\tfrac{2^{n}}{n!\pi^n}  \left| \Delta(2\cos\theta_1,\ldots,2\cos\theta_{n}) \right|^2 \prod_{j=1}^{n} \sin^2\theta_j \, d\theta_1 \ldots d\theta_n
 \\
 \times
  \tfrac{d\psi_1}{2\pi} \ldots \tfrac{d\psi_{n}}{2\pi}
 \times
(2n-1)! \, d\mu_1\ldots d\mu_n d\nu_1\ldots d\nu_{n-1}.
\end{multline*}
\end{proposition}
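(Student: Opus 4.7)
The plan is to parallel the argument used for $CSE(n)$ in Proposition~\ref{propSpecCircular}.  First, any $Q\in\bbUSp(n)$ admits a spectral decomposition $Q=BDB^\dagger$ with $B\in\bbUSp(n)$ and $D=\operatorname{diag}(e^{i\theta_1},\ldots,e^{i\theta_n})$ (read as quaternions), so that $\fC(D)=\operatorname{diag}(e^{i\theta_1},e^{-i\theta_1},\ldots,e^{i\theta_n},e^{-i\theta_n})$.  The matrix $B$ is unique modulo the centralizer of $D$ in $\bbUSp(n)$, which for generic $D$ is the maximal torus $T=\{\operatorname{diag}(e^{i\phi_1},\ldots,e^{i\phi_n}):\phi_j\in[0,2\pi)\}$.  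Letting $S$ be uniform on $T$ and independent of everything else, the product $\tilde B:=BS$ still diagonalizes $Q$ (since $S$ commutes with $D$), and a Weyl integration argument combined with left-invariance of Haar on $\bbUSp(n)$ shows that $\tilde B$ is Haar distributed and independent of the eigenvalues $(\theta_1,\ldots,\theta_n)$.

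Next, translating to the complex representation, $\fC(Q)^m=\fC(\tilde B)\fC(D)^m\fC(\tilde B)^\dagger$, so if $u_1,u_2\in\bbC^{2n}$ denote the first two columns of $\fC(\tilde B)^\dagger=\fC(\tilde B^{-1})$, then for $j,k\in\{1,2\}$,
\ba
\langle e_j,\fC(Q)^m e_k\rangle=\langle u_j,\fC(D)^m u_k\rangle.
\ea
Writing $u_1=(a_1,b_1,\ldots,a_n,b_n)^T$ and $u_2=(c_1,d_1,\ldots,c_n,d_n)^T$ identifies the spectral measure as $\sum_j M_j\delta_{e^{i\theta_j}}+\sum_j N_j\delta_{e^{-i\theta_j}}$, where $M_j$ and $N_j$ are the $2\times 2$ matrices whose entries are the products $\bar a_jc_j$, $|a_j|^2$, $\bar b_jd_j$, etc.\ picked out by $\fC(D)^m$.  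Since $\tilde B^{-1}\in\bbUSp(n)$ is also Haar, Proposition~\ref{columns}(c) applies: $u_1$ is uniform on the unit sphere of $\bbC^{2n}$ and $u_2=Z\bar u_1$, which translates componentwise to $c_j=-\bar b_j$ and $d_j=\bar a_j$.  Substituting these relations and setting $\mu_j=|a_j|^2$, $\nu_j=|b_j|^2$, together with $\psi_j$ defined via $-\bar a_j\bar b_j=\sqrt{\mu_j\nu_j}\,e^{i\psi_j}$, one checks directly that $M_j=W_j$ and $N_j=W_j^R$ exactly as claimed.

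For the joint density, the eigenvalue factor is the classical Weyl density for $\bbUSp(n)$.  Parameterizing $a_j=\sqrt{\mu_j}e^{i\alpha_j}$ and $b_j=\sqrt{\nu_j}e^{i\beta_j}$ and using that a uniform point on the sphere of $\bbC^{2n}$ has squared moduli uniform on the simplex with density $(2n-1)!$ (Lemma~\ref{lemSphere}(e)) and independent uniform phases, we immediately obtain the simplex factor for $(\mu_j,\nu_j)$.  The phase relation $\psi_j\equiv\pi-\alpha_j-\beta_j\pmod{2\pi}$ yields $\psi_j$ independent uniform on $[0,2\pi)$ and independent of the $(\mu_j,\nu_j)$; combined with the independence of $\tilde B$ from $D$ established in the first step, the three factors decouple precisely as stated.

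The main obstacle is the first step: one must correctly identify the centralizer of a generic quaternionic-diagonal $D$ in $\bbUSp(n)$ and invoke Weyl integration to justify that randomizing by an independent uniform $S\in T$ yields a $\tilde B$ that is both Haar on $\bbUSp(n)$ and independent of the eigenvalues.  Once this independence is secured, the remaining linear-algebraic and distributional bookkeeping is routine and entirely parallel to the $CSE$ portion of Proposition~\ref{propSpecCircular}.
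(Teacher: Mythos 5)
Your proposal is correct and follows essentially the same route as the paper: diagonalize $Q=BDB^R$ (with $B^\dagger=B^R$), randomize over the stabilizer of $D$ (correctly identified as the maximal torus) so that the eigenvector matrix becomes Haar distributed and independent of the eigenvalues, apply Proposition~\ref{columns}(c) to the first two columns of $\fC(\tilde B)^{\dagger}$, and read off $W_j$ and $W_j^R$ together with the flat $(2n-1)!$ Dirichlet law of the $2n$ weights and the independent uniform phases $\psi_j$. The only slip is a citation: the $(2n-1)!$ flat-simplex density of the $2n$ squared moduli is Lemma~\ref{lemSphere}(b) applied with $2n$ in place of $n$ (as the paper cites), not part (e), whose density carries an extra factor $\prod_j \mu_j$.
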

\begin{proof}
The distribution of the eigenvalues is a classical result. As for the matrix eigenweights, first write a matrix $U\in \bbUSp(n)$ as $U=BDB^R$, where $\fC(D)=\operatorname{diag}(e^{i \theta_1},e^{-i\theta_1},\ldots,e^{i\theta_n},e^{-i\theta_n})$. Repeating the arguments from the proof of Proposition~\ref{propSpecCircular}, we may think of $B$ as being Haar distributed in $\bbUSp(n)$. Let the first quaternionic column of $B^R$ be $[q_1,\ldots,q_n]^T$, $q_j\in\bbHR$, with
$$
\fC(q_j)=
\begin{bmatrix} \alpha_j & -\bar\beta_j \\ \beta_j & \bar{\alpha}_j\end{bmatrix}.
$$
Now $U^m=B D^m B^R$ and a simple calculation imply
\begin{align*}
\langle  e_1,\fC(U)^m  e_1 \rangle &= \sum_{j=1}^n |\alpha_j|^2 e^{im\theta_j} + \sum_{j=1}^n |\beta_j|^2 e^{-im\theta_j}, \\
\langle  e_2,\fC(U)^m  e_2 \rangle &= \sum_{j=1}^n |\beta_j|^2 e^{im\theta_j} + \sum_{j=1}^n |\alpha_j|^2 e^{-im\theta_j}, \\
\langle  e_1,\fC(U)^m  e_2 \rangle &= -\sum_{j=1}^n \bar\alpha_j \bar\beta_j e^{im\theta_j} + \sum_{j=1}^n \bar\alpha_j \bar\beta_j e^{-im\theta_j}, \\
\langle  e_2,\fC(U)^m  e_1 \rangle &= -\sum_{j=1}^n \alpha_j\beta_j e^{im\theta_j} + \sum_{j=1}^n \alpha_j\beta_j e^{-im\theta_j}.
\end{align*}
Therefore the matrix-valued measure~\eqref{matrixSpectralMeasure} has the form~\eqref{USpmeasure} with
$$
W_j=\begin{bmatrix} |\alpha_j|^2 & -\bar\alpha_j\bar\beta_j \\ -\alpha_j\beta_j & |\beta_j|^2 \end{bmatrix}.
$$
Using Proposition~\ref{columns}(c) 
and Lemma~\ref{lemSphere}(b),  we obtain the promised joint distribution.
\end{proof}


\section{Matrix Models of Unitary Random Matrix Ensembles}\label{sModels}

It turns out that if one applies the CMV-fication procedure described in Section~\ref{ssCMV}, then the Verblunsky coefficients become statistically independent with the distributions that are explicitly computable. For $CUE(n)$ and $SO(2n)$ this was done by Killip--Nenciu in~\cite{KN}. We restate these results here for completeness and develop analogues for the other ensembles.  We stress that for the quaternionic ensembles $CSE$ and $\bbUSp$, one needs to consider the $2\times2$ block CMV-fication if one hopes the same phenomenon to happen.

\begin{definition}\label{theta}
\begin{itemize}
\item[(a)] We say that a real-valued random variable, $X$, is beta-distributed with parameters $s,t>0$, which we denote by $X\sim B(s,t)$ if
\begin{equation*}
\bbE\{f(X)\} = \tfrac{2^{1-s-t}\Gamma(s+t)}{\Gamma(s)\Gamma(t)} \int_{-1}^1 f(x) (1-x)^{s-1}(1+x)^{t-1} d x.
\end{equation*}
We write $X\sim B(0,0)$ when $X$ takes values $1$ and $-1$ with probabilities $1/2$.  (This is the $t=s\to0$ limit of the above.)

\item[(b)] We say that a complex random variable, $X$, with values in the unit disk $\bbD$ is $\Theta(\nu)$-distributed (for $\nu>1$) if
\begin{equation*}
\bbE\{f(X)\} = \tfrac{\nu-1}{2\pi} \int\int_D f(z) (1-|z|^2)^{(\nu-3)/2} d^2 z,
\end{equation*}
where $d^2 z$ is the Lebesgue measure on $\bbD$.  As a limit case, $X\sim\Theta(1)$ means that $X$ is uniformly distributed on the unit circle.

\item[(c)] We say that a $2\times 2$ matrix-valued random variable, $X$, is $\Upsilon(\nu)$-distributed (for $\nu>3$) if
\begin{equation}\label{xMatrix}
X=
\begin{bmatrix} a_0+a_1 i & -a_2+a_3 i \\ a_2+a_3 i  & a_0-a_1 i\end{bmatrix},
\end{equation}
where, writing $\vec a = (a_1,a_2,a_3,a_4)$,
\begin{align*}
\bbE\{f(\vec a)\}  = \tfrac{(\nu-1)(\nu-3)}{4\pi^2} \int_{||\vec x||\le1} f(\vec x) (1-|\vec x|^2)^{(\nu-5)/2} d x_1 dx_2 dx_3 dx_4.
\end{align*}
By extension, we write $X\sim\Upsilon(3)$ when $X$ is Haar distributed in $SU(2)$, that is,~\eqref{xMatrix} with $(a_1,a_2,a_3,a_4)$ uniformly distributed over $\{x\in\bbR^4 :x_1^2+x_2^2+x_3^2+x_4^2=1\}$.
\end{itemize}
\end{definition}
\begin{remarks}
1. Incorporating our stated extensions, $B(\nu/2,\nu/2)$ is the distribution of the first component of the random vector uniformly distributed over the $\nu$-sphere (if $\nu\ge0$). $\Theta(\nu)$ is the distribution of $x_1+ix_2$, where $x_1,x_2$ are the first two components of the random vector uniformly distributed over the $\nu$-sphere (if $\nu\ge1$). Finally, the  $\Upsilon(\nu)$-distributed $a_1,a_2,a_3,a_4$ appear as the  first four components of the random vector uniformly distributed over the $\nu$-sphere  (if $\nu\ge3$). This is shown in~\cite[Cor~A.2]{KN}.
\end{remarks}

\begin{proposition}\label{thmDysonCMV}
The CMV-fication of ensembles $COE(n)$ or $CUE(n)$ produces the CMV matrix  $\calC(\alpha_0,\ldots,\alpha_{n-1})$  with independent Verblunsky coefficients
\begin{equation}\label{dysonCMV}
\alpha_k \sim \Theta(\beta(n-1-k)+1), \quad 0\le k \le n-1,
\end{equation}
where $\beta=1$ for $COE(n)$ and $\beta=2$ for $CUE(n)$.

More generally, for any $0<\beta<\infty$, the spectral measure of $\calC(\alpha_0,\ldots,\alpha_{n-1})$ with independent Verblunsky coefficients \eqref{dysonCMV} has the law  \eqref{circular}.
\end{proposition}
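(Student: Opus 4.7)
My plan is to prove the general-$\beta$ statement first and then deduce the cases $\beta=1,2$ from it. For the latter step, Proposition~\ref{propSpecCircular} already identifies the joint distribution of the spectral measure of $COE(n)$ or $CUE(n)$ as~\eqref{circular} with $\beta=1,2$ respectively; since CMV-fication preserves the spectral measure (see the discussion around~\eqref{W}) and a unitary CMV matrix is uniquely determined by its spectral measure~\cite[Prop.~3.3]{KN_CMV}, the general-$\beta$ statement applied in reverse yields the desired joint law of the Verblunsky coefficients.

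For the main claim, the strategy is a change of variables. The set of probability measures on $\partial\bbD$ supported on exactly $n$ points admits two parametrizations: by the eigenvalue--eigenweight data $(\theta_1,\ldots,\theta_n,\mu_1,\ldots,\mu_{n-1})$ subject to~\eqref{domain1}--\eqref{domain2}, and by the Verblunsky coefficients $(\alpha_0,\ldots,\alpha_{n-1})$ with $\alpha_k\in\bbD$ for $k<n-1$ and $\alpha_{n-1}\in\partial\bbD$. The passage between them is the classical OPUC bijection visible in~\eqref{characteristicPoly}: the eigenvalues of $\calC(\alpha_0,\ldots,\alpha_{n-1})$ are the zeros of $\Phi_n$, and the eigenweights are determined from $\Phi_n$ through the Christoffel function.

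The technical heart is the Jacobian for this change of coordinates, to be handled by the explicit calculation in the Appendix on Jacobians. Combined with the classical OPUC identity
\begin{equation*}
\prod_{j=1}^{n} \mu_j \cdot \bigl|\Delta(e^{i\theta_1},\ldots,e^{i\theta_n})\bigr|^{2} = \prod_{k=0}^{n-2}\bigl(1-|\alpha_k|^2\bigr)^{n-1-k},
\end{equation*}
this allows me to rewrite the density~\eqref{circular} entirely in the coordinates $\alpha_k$. The idea is that the Vandermonde factor $|\Delta|^\beta$ and the weight factor $\prod\mu_j^{\beta/2-1}$, together with the Jacobian, combine into a product of the form
\begin{equation*}
\prod_{k=0}^{n-2} \tfrac{\beta(n-1-k)}{2\pi}\bigl(1-|\alpha_k|^2\bigr)^{\beta(n-1-k)/2 - 1}\,d^{2}\alpha_k \cdot \frac{d\phi_{n-1}}{2\pi},
\end{equation*}
where $\alpha_{n-1}=e^{i\phi_{n-1}}$. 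By Definition~\ref{theta}(b) this is precisely the joint density of independent coefficients $\alpha_k\sim\Theta(\beta(n-1-k)+1)$, with the limiting case $\Theta(1)$ (uniform on $\partial\bbD$) occurring for $k=n-1$.

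The main obstacle is the Jacobian computation itself. For $\beta=2$ it was performed by Killip--Nenciu~\cite{KN}; the extension to general $\beta$ follows the same bijective scheme, but care is required to treat the factor $\prod\mu_j^{\beta/2-1}$ and to encode the simplex constraint $\sum\mu_j=1$ into the $d\mu_{n-1}$ coordinate and the boundary behaviour of $\alpha_{n-1}\in\partial\bbD$. Once the displayed product identity and the Jacobian formula are in hand, the factorization into independent $\Theta$-laws and the matching of parameters is a routine algebraic verification.
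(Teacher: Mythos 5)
Your overall skeleton is sound and is in fact the same as the paper's: the paper proves this proposition by pure citation, taking the general-$\beta$ statement from~\cite[Prop.~4.2]{KN}, the $CUE$ case from~\cite[Prop.~3.3]{KN}, and obtaining $COE$ exactly as you do, by combining Proposition~\ref{propSpecCircular}, the general-$\beta$ statement, and the fact that a unitary CMV matrix is determined by its spectral measure. So the part of your argument that deduces the $\beta=1$ case is precisely the paper's argument, and your plan to re-derive the general-$\beta$ statement by the change of variables from $(\theta_1,\ldots,\theta_n,\mu_1,\ldots,\mu_{n-1})$ to $(\alpha_0,\ldots,\alpha_{n-1})$, using Lemma~\ref{OP_lemma}(v) to absorb $|\Delta|^\beta\prod\mu_j^{\beta/2-1}$ into powers of $(1-|\alpha_k|^2)$, is exactly how that statement is proved in~\cite{KN}; nothing in it is wrong.

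Two points need repair, however. First, your ``technical heart,'' the Jacobian of the map $(\theta_j,\mu_j)\mapsto(\alpha_k)$, is \emph{not} in Appendix~\ref{sJacobians}: that appendix only treats the roots-to-coefficients Jacobians (Lemmas~\ref{lemJacobian} and~\ref{jacobian}), which are used for the truncated ensembles in Section~\ref{sEigenvalues}, not for the spectral-measure change of variables here. You must either compute this Jacobian directly or, as in~\cite{KN} (and as this paper does in the proof of Proposition~\ref{thmRealOrtho}(b)), observe that the Jacobian is independent of $\beta$ and read off its value by comparing the two known $\beta=2$ laws, namely the Verblunsky law of $\bbU(n)$ from~\cite[Prop.~3.3]{KN} and the spectral-measure law \eqref{circular} with $\beta=2$ from Proposition~\ref{propSpecCircular}. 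Second, and relatedly, your phrasing ``for $\beta=2$ it was performed by Killip--Nenciu; the extension to general $\beta$ follows the same scheme'' conflates the Jacobian (a single $\beta$-independent object) with the density being transported; and if you do fix the Jacobian by the $\beta=2$ comparison, then ``deducing'' the $CUE$ case from the general-$\beta$ statement applied in reverse is circular --- the $\beta=2$ input is already~\cite[Prop.~3.3]{KN}, which is how the paper handles $CUE$. The genuinely new output of your reverse argument is the $COE$ ($\beta=1$) case, where it works exactly as you describe.
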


\begin{remarks}
1. The ensembles of CMV matrices with~\eqref{dysonCMV} for any $0<\beta<\infty$ were introduced by Killip--Nenciu in~\cite{KN}, and are called the circular $\beta$-ensembles. One way to look at it is that Dyson's ensembles embed themselves into a family of random matrix ensembles that depend on $0<\beta<\infty$ continuously. This looks natural if one thinks of the eigenvalue point process as an interacting particle system.  Similarly, we will see a $\beta$-extrapolation for $\bbSO(n)$ and $\bbO(n)\setminus\bbSO(n)$ ensembles below as well. It would be interesting to find a natural $\beta$-extrapolation for $\bbUSp(n)$.

2. The CMV-fication procedure requires cyclicity of $e_1$ or, equivalently, that $|\alpha_k|<1$ for $k<n-2$.  This holds with probability 1.
\end{remarks}
\begin{proof}
The result for $CUE(n)$ was established in~\cite[Prop 3.3]{KN}.

The result for $COE(n)$ is the combination of our Proposition~\ref{propSpecCircular} above,~\cite[Prop 4.2]{KN}, and the fact that two CMV matrices with the same spectral measures must coincide.

The last statement of Proposition is in~\cite[Prop 4.2]{KN}.
\end{proof}

\begin{proposition}\label{thmCSECMV}
The $2\times 2$ block CMV-fication of $CSE(n)$ produces the block CMV matrix $\calC(\alpha_0 I_2,\ldots,\alpha_{n-1} I_2)$ with independent matrix-valued Verblunsky coefficients $\alpha_0 I_2,\ldots,\alpha_{n-1} I_2$, where  $\alpha_j$ are distributed according to~\eqref{dysonCMV} with $\beta=4$.

In other words, the $2\times 2$ spectral measures of $CSE(n)$ and  of $\calC(\alpha_0 I_2,\ldots,\alpha_{n-1} I_2)$ with $\alpha_j$ given by \eqref{dysonCMV} with $\beta=4$ coincide.
\end{proposition}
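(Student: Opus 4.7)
The plan is to mirror the strategy used for $COE(n)$ in Proposition~\ref{thmDysonCMV}: match the $2\times 2$ matrix-valued spectral measure of $CSE(n)$ with that of the candidate block CMV matrix, then invoke uniqueness of the block CMV representation to conclude.

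First I would invoke Proposition~\ref{propSpecCircular}: the $2\times2$ spectral measure of $CSE(n)$ has the ``scalar'' form $\sum_{j=1}^n \mu_j I_2\,\delta_{e^{i\theta_j}}$ with $(\theta_j,\mu_j)$ following the law \eqref{circular} at $\beta=4$. Next I would compute the $2\times 2$ spectral measure of the candidate block CMV matrix $\calC(\alpha_0 I_2,\ldots,\alpha_{n-1}I_2)$ with independent $\alpha_j\sim\Theta(4(n-1-j)+1)$. The key observation is that when every matrix-valued Verblunsky coefficient is a scalar multiple of $I_2$, each block $\Xi_k$ in \eqref{CMV1} factors tensorially as $\Xi_k(\alpha_k)\otimes I_2$, and hence, under the reindexing $e_{2j-1}\leftrightarrow f_j\otimes g_1$, $e_{2j}\leftrightarrow f_j\otimes g_2$, the block factors $\calL,\calM$ and therefore the entire block CMV matrix become $\calC(\alpha_0,\ldots,\alpha_{n-1})\otimes I_2$, where the first tensor factor is the ordinary scalar CMV matrix. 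With this decoupling,
\[
\langle e_j, \calC^m e_k\rangle \;=\; \langle g_j,g_k\rangle\,\langle f_1,\calC(\alpha_0,\ldots,\alpha_{n-1})^m f_1\rangle \;=\; \delta_{jk}\,\widehat\mu_m,
\]
so the $2\times 2$ spectral measure is exactly the scalar spectral measure of $\calC(\alpha_0,\ldots,\alpha_{n-1})$ tensored with $I_2$. By the ``more generally'' part of Proposition~\ref{thmDysonCMV} at $\beta=4$, that scalar spectral measure follows the law \eqref{circular} with $\beta=4$, which coincides precisely with the distribution of $(\theta_j,\mu_j)$ for $CSE(n)$.

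With both $2\times 2$ spectral measures shown to have identical joint laws, I would close the argument using the uniqueness property for block CMV matrices recorded in Section~\ref{ssCMV}: any two block CMV matrices (with appropriate cyclicity) sharing the same $2\times 2$ spectral measure with respect to $(e_1,e_2)$ must coincide. Cyclicity here amounts to $\alpha_j^\dagger\alpha_j<I_2$ for $j<n-1$, which holds almost surely for both ensembles. Hence the block CMV-fication of $CSE(n)$ is distributed identically to $\calC(\alpha_0 I_2,\ldots,\alpha_{n-1}I_2)$, proving in particular that the random matrix-valued Verblunsky coefficients are almost surely scalar multiples of $I_2$ with the claimed independent $\Theta$-distributions.

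The main obstacle I anticipate is the careful bookkeeping of the tensor/block structure: verifying that scalar-matrix Verblunsky coefficients genuinely yield a block CMV matrix that factors as $\calC_{\mathrm{scalar}}\otimes I_2$ under the particular interlaced basis ordering of Definition~\ref{CMV}, and checking that the resulting $(e_1,e_2)$-spectral measure is indeed the scalar spectral measure times $I_2$. Once this tensor decomposition is established, the remainder reduces mechanically to the previously proven $\beta=4$ scalar statement in Proposition~\ref{thmDysonCMV} combined with block CMV uniqueness.
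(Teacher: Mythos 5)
Your proposal is correct and follows essentially the same route as the paper: both arguments rest on Proposition~\ref{propSpecCircular} showing the $2\times2$ spectral measure of $CSE(n)$ is $I_2$ times a scalar measure with law \eqref{circular} at $\beta=4$, on the observation that scalar-type measures correspond to Verblunsky coefficients of the form $\alpha_j I_2$ (your tensor factorization $\calC(\alpha_0 I_2,\ldots)\cong\calC(\alpha_0,\ldots)\otimes I_2$ is just an explicit way of saying this), and on the $\beta=4$ case of Proposition~\ref{thmDysonCMV} together with the bijection between spectral measures and (block) CMV matrices. The paper phrases the last step as reading off the matrix coefficients directly from the scalar measure rather than invoking block-CMV uniqueness, but this is the same argument.
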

\begin{remark}
Scalar CMV-fication of $CSE(n)$ is not natural.  It would produce CMV matrix with $2n$ Verblunsky coefficients with non-independent distributions.
\end{remark}
\begin{proof}
By Proposition~\ref{propSpecCircular} the random $2\times2$ spectral  measure of a $CSE(n)$ matrix is equal to $I_2$ times the scalar measure~\eqref{finiteSpectralMeasure} with \eqref{circular} and $\beta=4$. This implies that the $2\times2$ Verblunsky coefficients of $CSE(n)$ are equal to $I_2$ times the scalar Verblunsky coefficients of the scalar measure, which are~\eqref{dysonCMV} by the previous proposition.
\end{proof}

\begin{proposition}\label{thmOCMV}
The CMV-fication of the $\bbO(N)$ ensemble produces the CMV matrix  $\calC(\alpha_0,\ldots,\alpha_{N-1})$  with independent Verblunsky coefficients distributed as
\begin{equation}\label{bbOCMV}
\alpha_k \sim B\left( \tfrac{N-1-k}{2}, \tfrac{N-1-k}{2}\right), \quad 0\le k \le N-1.
\end{equation}
\end{proposition}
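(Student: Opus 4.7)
The plan is to extend~\cite[Prop.~3.4]{KN}, which establishes case (a) ($\bbSO(2n)$) of Proposition~\ref{thmOSpectr}, to the remaining three components of $\bbO(N)$. Since $O\in\bbO(N)$ has real entries, the moments $\langle e_1, O^m e_1\rangle$ are real, so the spectral measure $\mu$ is invariant under $z\mapsto\bar z$; measures of this kind have real Verblunsky coefficients, consistent with $\alpha_k\in[-1,1]$ as required by the beta distributions in~\eqref{bbOCMV}.

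First I would handle $\alpha_{N-1}$ separately. From~\eqref{characteristicPoly} we have $\Phi_N(0)=(-1)^N\det O$, while Szeg\H o's recurrence gives $\Phi_N(0)=-\bar\alpha_{N-1}$, yielding $\alpha_{N-1}=(-1)^{N-1}\det O\in\{\pm1\}$. Because $\bbO(N)=\bbSO(N)\sqcup(\bbO(N)\setminus\bbSO(N))$ carries equal Haar mass on the two components, $\alpha_{N-1}$ equals $\pm 1$ with probability $\tfrac12$ each, i.e.\ $\alpha_{N-1}\sim B(0,0)$; moreover, conditioning on the sign of $\det O$ selects precisely which of cases (a)--(d) of Proposition~\ref{thmOSpectr} governs the spectral measure. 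For the remaining coefficients $\alpha_0,\ldots,\alpha_{N-2}$, in each case I would take the explicit joint density of $(\theta_j,\mu_j)$ from Proposition~\ref{thmOSpectr} and change variables to $(\alpha_0,\ldots,\alpha_{N-2})$ using the Jacobian formulas collected in the Appendix. The calculation relies on the standard OPUC identities that rewrite $\prod_j\mu_j$ and the symmetric Vandermonde $|\Delta(2\cos\theta_j)|^2\prod_j(\sin\theta_j)^{\epsilon}$ as products $\prod_k\rho_k^{e_k}$ with appropriate exponents, exactly as in~\cite[Prop.~3.4]{KN}. The result should collapse to $\prod_{k=0}^{N-2}(1-\alpha_k^2)^{(N-3-k)/2}d\alpha_k$ up to normalization, identifying the $\alpha_k$'s as independent and beta-distributed as asserted.

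The main technical obstacle is the Jacobian bookkeeping when boundary atoms are present: cases (b)--(d) involve extra atoms at $\pm 1$ whose weights $\mu_n,\mu_{n+1}$ appear with square-root singularities, together with additional trigonometric factors $\sin^2(\theta_j/2)$, $\cos^2(\theta_j/2)$, or $\sin^2\theta_j$ in the density. The key to resolving this is the telescoping identity expressing $\Phi_N(\pm 1)$ as a product of $1\mp\alpha_k$'s, which allows the endpoint atom weights $\mu(\pm 1)$ to be written in closed form in the $\alpha$-variables and absorbs the square-root singularities into the main product. After these substitutions, all four density computations should reduce to the same product form, yielding independence and the claimed beta laws in a uniform fashion across the four sub-ensembles.
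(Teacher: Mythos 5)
Your overall strategy is the paper's own: the paper proves this proposition by citing \cite[Prop 3.5]{KN} for $\bbSO(2n)$, asserting that the other three components are handled ``in the exact same way,'' and using the equal Haar mass of $\bbSO(N)$ and $\bbO(N)\setminus\bbSO(N)$ to obtain the $B(0,0)$ law for $\alpha_{N-1}$ --- which is exactly your determinant/equal-mass argument, and your case-by-case passage from the spectral-measure laws of Proposition~\ref{thmOSpectr} to the Verblunsky coefficients is precisely the Killip--Nenciu computation being invoked. (Minor point: the KN result on CMV-fication of $\bbSO(2n)$ is their Prop.~3.5, not 3.4; the latter is the spectral-measure statement.)

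One concrete correction to how you propose to execute the change of variables: the Jacobian you need, for the map from $(\theta_j,\mu_j)$ to the real coefficients $(\alpha_0,\ldots,\alpha_{N-2})$ for symmetric measures with possible atoms at $\pm1$, is \emph{not} among the Jacobian formulas in the Appendix --- Lemmas~\ref{lemJacobian} and~\ref{jacobian} concern the roots-to-coefficients map used for the truncated ensembles in Section~\ref{sEigenvalues}. Inside this paper, that Jacobian appears only in the proof of Proposition~\ref{thmRealOrtho}(b)--(d), where it is \emph{derived from} Proposition~\ref{thmOCMV} combined with Proposition~\ref{thmOSpectr}; importing it from there would make your argument circular. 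You must instead compute it directly, following the change-of-variables machinery of \cite[Sect.~5, Prop.~5.3]{KN}, adapted to the extra atoms at $\pm1$ --- this is exactly what the paper's phrase ``the exact same way'' refers to. Your identified ingredients, the telescoping evaluations $\Phi_N(\pm1)=\prod_k(1\mp\alpha_k)$ from Lemma~\ref{OP_lemma}(i) and the weight identity of Lemma~\ref{OP_lemma}(v), are indeed what absorb the $\mu_n^{-1/2}$, $\mu_{n+1}^{-1/2}$ singularities and the $\sin^2\theta_j$-type factors, so once the Jacobian is computed from scratch the rest of your outline closes correctly, with the law of $(\alpha_0,\ldots,\alpha_{N-2})$ coming out the same in all four cases and hence independent of $\alpha_{N-1}$.
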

\begin{remark}
CMV-fication of $\bbSO(n)$ and $\bbO(n)\setminus\bbSO(n)$ gives the same~\eqref{bbOCMV} except for the last coefficient $\alpha_{n-1}$ which has to be taken always $-1$ for $\bbSO(2k)$ or $\bbO(2k+1)\setminus\bbSO(2k+1)$ or $1$ for $\bbSO(2k+1)$ or $\bbO(2k)\setminus\bbSO(2k)$.
\end{remark}
\begin{proof}
This was proved for $\bbSO(2n)$ in~\cite[Prop 3.5]{KN}. The other three cases can be proved in the exact same way. Finally note that the total measure of $\bbSO(N)$ and $\bbO(N)\setminus \bbSO(N)$ with respect to the Haar measure on $\bbO(N)$ are equal. This gives the $B(0,0)$ distribution for $\alpha_{N-1}$.
\end{proof}

The orthogonal groups treated in Proposition~\ref{thmOCMV} can be realized as special cases of the following more general result:

\begin{proposition}[Real orthogonal $\beta$-ensemble]\label{thmRealOrtho}
For any $0<\beta<\infty$, $a>-1$, and $b>-1$, we have:
\begin{itemize}
\item[(a)] ($N=2n, \det=1$) The spectral measure of $\calC(\alpha_0,\ldots,\alpha_{2n-2},-1)$ with independent Verblunsky coefficients
\begin{equation*}\label{generalSOCMV}
\alpha_k \sim
\begin{cases}
B\left( \frac{2n-2-k}{4} \beta + a+1, \frac{2n-2-k}{4}\beta + b+1  \right), & 0\le k \le 2n-2, k \mbox{ even}, \\
B\left( \frac{2n-3-k}{4} \beta + a+b+2, \frac{2n-1-k}{4}  \beta \right), & 0\le k \le 2n-2, k \mbox{ odd}
\end{cases}
\end{equation*}
is~\eqref{formSpectralMeasureO1} on~\eqref{domainO1}, \eqref{domainO2} with the joint distribution
\begin{multline*}
\tfrac{1}{C_n(a,b,\beta)} \left| \Delta(2\cos\theta_1,\ldots,2\cos\theta_n) \right|^\beta \prod_{j=1}^n |1-\cos\theta_j|^{a+\tfrac12} |1+\cos\theta_j|^{b+\tfrac12} \, d\theta_1 \ldots d\theta_n
\\ \times \tfrac{1}{K_n(\beta)} \prod_{j=1}^n \mu_j^{\tfrac\beta2-1} d\mu_1\ldots d\mu_{n-1}.
\end{multline*}

\item[(b)] ($N=2n, \det=-1$) The spectral measure of $\calC(\alpha_0,\ldots,\alpha_{2n-2},1)$ with independent Verblunsky coefficients
\begin{equation}\label{generalSOCMV2}
\alpha_k \sim
B\left( \tfrac{2n-1-k}{4} \beta , \tfrac{2n-1-k}{4}\beta  \right), \quad 0\le k \le 2n-2
\end{equation}
is~\eqref{formSpectralMeasureO2} on~\eqref{domainO3}, \eqref{domainO4} with the joint distribution
\begin{multline*}
\tfrac{1}{D_n(\beta)} \left| \Delta(2\cos\theta_1,\ldots,2\cos\theta_{n-1}) \right|^\beta \prod_{j=1}^{n-1} |1-\cos\theta_j|^{\tfrac{3\beta}{4}-\tfrac12} |1+\cos\theta_j|^{\tfrac{3\beta}{4}-\tfrac12} \, d\theta_1 \ldots d\theta_n
\\ \times \tfrac{1}{L_n(\beta)} (\mu_n\mu_{n+1})^{\tfrac\beta4-1} \prod_{j=1}^{n-1} \mu_j^{\tfrac\beta2-1}  d\mu_1\ldots d\mu_{n}.
\end{multline*}

\item[(c)] ($N=2n+1, \det=1$) The spectral measure of $\calC(\alpha_0,\ldots,\alpha_{2n-1},1)$ with independent Verblunsky coefficients
\begin{equation*}\label{generalSOCMV3}
\alpha_k \sim
B\left( \tfrac{2n-k}{4} \beta , \tfrac{2n-k}{4}\beta  \right), \quad 0\le k \le 2n-1
\end{equation*}
is~\eqref{formSpectralMeasureO3} on~\eqref{domainO1}, \eqref{domainO4} with the joint distribution
\begin{multline*}
\tfrac{1}{E_n(\beta)} \left| \Delta(2\cos\theta_1,\ldots,2\cos\theta_n) \right|^\beta \prod_{j=1}^n |1-\cos\theta_j|^{\tfrac{3\beta}{4}-\tfrac12} |1+\cos\theta_j|^{\tfrac\beta4-\tfrac12} \, d\theta_1 \ldots d\theta_n
\\ \times \tfrac{1}{M_n(\beta)} \mu_{n+1}^{\tfrac\beta4-1} \prod_{j=1}^{n}  \mu_j^{\tfrac\beta2-1}  d\mu_1\ldots d\mu_{n}.
\end{multline*}

\item[(d)] ($N=2n+1, \det=-1$) The spectral measure of $\calC(\alpha_0,\ldots,\alpha_{2n-1},-1)$ with independent Verblunsky coefficients
\begin{equation*}\label{generalSOCMV4}
\alpha_k \sim
B\left( \tfrac{2n-k}{4} \beta , \tfrac{2n-k}{4}\beta  \right), \quad 0\le k \le 2n-1
\end{equation*}
is~\eqref{formSpectralMeasureO4} on~\eqref{domainO1}, \eqref{domainO4} with the joint distribution
\begin{multline*}
\tfrac{1}{E_n(\beta)} \left| \Delta(2\cos\theta_1,\ldots,2\cos\theta_n) \right|^\beta \prod_{j=1}^n |1-\cos\theta_j|^{\tfrac{\beta}{4}-\tfrac12} |1+\cos\theta_j|^{\tfrac{3\beta}{4}-\tfrac12} \, d\theta_1 \ldots d\theta_n
 \\ \times \tfrac{1}{M_n(\beta)} \mu_{n+1}^{\tfrac\beta4-1} \prod_{j=1}^{n}  \mu_j^{\tfrac\beta2-1}  d\mu_1\ldots d\mu_{n}.
\end{multline*}
\end{itemize}

The normalization constants are:
\begin{align*}
 C_n(a,b,\beta) &=  n! 2^{n(a+b+1)+\beta n (n-1)} \prod_{j=0}^{n-1} \frac{\Gamma(a+1+\tfrac\beta2 j)\Gamma(b+1+\tfrac\beta2 j)\Gamma(\tfrac\beta2 (j+1))}{\Gamma(a+b+2+\tfrac\beta2 (n-1+j)) \Gamma(\tfrac\beta2)}; \\
 K_n(\beta) &= \frac{\left[\Gamma(\frac12 \beta)\right]^n}{\Gamma(\frac12 \beta n)};\\
 D_n(\beta) &= \frac{1}{L_n(\beta) } \frac{n! \pi^{n-1/2}}{2^{(2n-1)\frac\beta2-n}} \prod_{j=1}^{2n-1} \frac{\Gamma(\frac\beta4 j)}{\Gamma(\frac12 + \frac\beta4 j)}; \\
 L_n(\beta) &= \frac{\left[\Gamma(\frac12 \beta)\right]^{n-1} \left[\Gamma(\frac14 \beta)\right]^2 }{\Gamma(\frac12 \beta n)};\\
 E_n(\beta) &= \frac{1}{M_n(\beta) } \frac{n! \pi^n}{2^{(\frac\beta2-1)n}} \prod_{j=1}^{2n} \frac{\Gamma(\frac\beta4 j)}{\Gamma(\frac12 + \frac\beta4 j)}; \\
 M_n(\beta) &= \frac{\left[\Gamma(\frac12 \beta)\right]^{n} \Gamma(\frac14 \beta) }{\Gamma(\frac12 \beta (n+\tfrac12))}.
\end{align*}
\end{proposition}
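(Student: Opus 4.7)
The plan is to compute the joint density of $(\alpha_0,\ldots,\alpha_{N-1})$ from the stated independent $B$-distributions, then transform to the spectral variables $(\theta_j,\mu_j)$ via the CMV Jacobian formula. This strategy closely parallels the proof of \cite[Prop~3.5]{KN} (the case $\beta=2$, $a=b=0$, $N=2n$) and of Proposition~\ref{thmDysonCMV} above. The same template applies to all four cases (a)--(d); the distinctions among them are bookkeeping about the final coefficient $\alpha_{N-1}\in\{-1,+1\}$, which forces eigenvalues at $-1$ and/or $+1$ and breaks the parity of the indexing.

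First I would write down the product density of $(\alpha_0,\ldots,\alpha_{N-2})$, reading off from Definition~\ref{theta}(a). Since each $\alpha_k$ is real with an explicit power of $(1-\alpha_k^2)$ in its law, the joint density is a monomial in the $(1-\alpha_k^2)$ with exponents that depend on the parity of $k$ (only in cases where $a\ne b$ or the trailing coefficient is asymmetric). Next I would invoke the Jacobian of the map $(\alpha_0,\ldots,\alpha_{N-2}) \leftrightarrow (\theta_1,\ldots,\theta_n,\mu_1,\ldots,\mu_{m-1})$ collected in the Jacobian appendix. This Jacobian, as in \cite{KN}, naturally expresses $\prod_k(1-\alpha_k^2)$ in terms of $\prod_j\mu_j$ and factors such as $\sin^2\theta_j$, $(1\pm\cos\theta_j)$ arising from the split between even and odd CMV blocks, together with a Vandermonde factor $|\Delta(2\cos\theta_1,\ldots,2\cos\theta_n)|^\beta$.

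Multiplying the product density by this Jacobian and grouping terms should produce the claimed joint distributions: the Vandermonde raised to the $\beta$ power comes from the odd-index coefficients (as in \cite{KN}), while the weights on $1\pm\cos\theta_j$ encoding the parameters $a,b$ arise from the even-index coefficients; the $\mu_j^{\beta/2-1}$ (and the exceptional $\mu^{\beta/4-1}$ for boundary weights) fall out of the interaction of the $B$-density exponents with the Jacobian factors of $\mu_j$. The normalization constants $C_n,K_n,D_n,L_n,E_n,M_n$ can then be determined either by matching the product of the $B$-normalizations to the integrated form, or (as an independent check) by a Selberg-type integral in the $\theta$'s and a Dirichlet integral in the $\mu$'s, simplified via the Gamma duplication formula.

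The principal obstacle will be the parity bookkeeping and the boundary terms. The even-indexed and odd-indexed $\alpha_k$'s play asymmetric roles, both in the prescribed $B$-parameters and in the Jacobian exponents, so the exponents of $(1-\cos\theta_j)$ and $(1+\cos\theta_j)$ have to be tallied carefully — this is what distinguishes cases (a)--(d). In particular, freezing $\alpha_{N-1}=\pm1$ replaces a $\Theta$-style weight by a half-weight, producing the exceptional $\mu_n^{\beta/4-1}$ or $(\mu_n\mu_{n+1})^{\beta/4-1}$ factors and an eigenvalue pinned to $\pm1$. Verifying the explicit form of the normalization constants in closed form will require a careful application of the Morris--Selberg integral together with the Legendre duplication formula $\Gamma(x)\Gamma(x+\tfrac12)=2^{1-2x}\sqrt{\pi}\,\Gamma(2x)$ to collapse products of $\Gamma(\tfrac\beta4 j)\Gamma(\tfrac12+\tfrac\beta4 j)$ into products of $\Gamma(\tfrac\beta2 j)$.
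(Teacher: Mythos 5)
Your overall strategy --- start from the product of the $B$-densities of the $\alpha_k$'s and push it through the change of variables $(\alpha_0,\ldots,\alpha_{N-2})\leftrightarrow(\theta_j,\mu_j)$ --- is indeed the skeleton of the paper's argument (and of \cite[Prop~5.3]{KN}, which the paper simply cites for part (a)). The genuine gap is the Jacobian itself. You write that you would ``invoke the Jacobian \ldots collected in the Jacobian appendix,'' but Appendix~\ref{sJacobians} contains only the root-to-coefficient Jacobians (Lemmas~\ref{lemJacobian} and~\ref{jacobian}); it says nothing about the map from Verblunsky coefficients to the spectral data $(\theta_1,\ldots,\theta_{n-1},\mu_1,\ldots,\mu_n)$ in the symmetric setting, and \cite{KN} only supplies that Jacobian for the geometry of case (a) (trailing coefficient $-1$, no forced eigenvalues at $\pm1$ beyond the conjugate pairs). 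For cases (b)--(d) the measure has atoms pinned at $+1$ and/or $-1$, the count of free angles and weights changes, and the Jacobian acquires the boundary factors $\sqrt{\mu_n\mu_{n+1}}$ (resp.\ $\sqrt{\mu_{n+1}}$) and modified $\sin\theta_j$ exponents; this object has to be \emph{established}, and your proposal treats it as already known. As written, the plan therefore cannot be executed for (b)--(d) without an additional idea.

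The paper's way of filling this hole is worth noting, because it avoids any direct determinant computation: since the Jacobian of the bijection between $(\alpha_0,\ldots,\alpha_{2n-2})$ and $(\theta_1,\ldots,\theta_{n-1},\mu_1,\ldots,\mu_n)$ does not depend on $\beta$, one can read it off from the single case $\beta=2$, where \emph{both} densities are known independently: the law of the Verblunsky coefficients of $\bbO(N)$ from Proposition~\ref{thmOCMV} (CMV-fication plus the column distribution), and the law of the spectral data from Proposition~\ref{thmOSpectr}(b)--(d) (Weyl-type diagonalization plus Lemma~\ref{lemSphere}). Taking the ratio yields the Jacobian, and Lemma~\ref{OP_lemma}(v) then converts $\prod_k(1-\alpha_k^2)^{(2n-1-k)/2}$ into the spectral quantities $\sqrt{\mu_n\mu_{n+1}}\prod\mu_j$, $\prod|\sin\theta_j|^3$ and the Vandermonde, after which the general-$\beta$ density \eqref{generalSOCMV2} transforms into the stated law. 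If you prefer to compute the Jacobian directly rather than by this comparison trick, that is a legitimate alternative route, but it is a substantive computation that your proposal neither performs nor reduces to a citable source; similarly, the Selberg/duplication-formula evaluation of the constants is only needed as a consistency check, since $L_n$, $M_n$ come from a standard Dirichlet integral and $D_n$, $E_n$ are then forced by normalization.
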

\begin{remarks}
1. If $\beta=2, a=b=-1/2$ in (a), and $\beta=2$ in (b), (c), (d), then one obtains matrix models for $\bbSO(2n)$, $\bbO(2n)\setminus \bbSO(2n)$, $\bbSO(2n+1)$, $\bbO(2n+1)\setminus \bbSO(2n+1)$, respectively.

2. The ensemble of matrices in (a) was introduced in~\cite{KN}. It is now called the real orthogonal $\beta$-ensemble (see Forrester~\cite[Sect 2.9]{bForrester}).

3. This admits a log-gas interpretation. Indeed, the eigenvalues of (a) can be viewed as $n$ unit charges $e^{i\theta_1},\ldots, e^{i\theta_n}$ lying on the half-circle $0<\theta<\pi$ that also interact with their image charges at $e^{-i\theta_1},\ldots, e^{-i\theta_n}$ and with the fixed charges $-\tfrac12+\tfrac{2a+1}{\beta}$  at $1$ and $-\tfrac12+\tfrac{2b+1}{\beta}$  at $1$. As usual, $0<\beta<\infty$ plays the role of the inverse temperature. This is worked out in~\cite[Prop 2.9.6]{bForrester}. Similarly, one can check that (b) corresponds to the charges $1-\tfrac1\beta$ at $1$ and $-1$; (c) to the charge $1-\tfrac1\beta$ at $1$ and $-\tfrac1\beta$ at $-1$; (d) to the charge $-\tfrac1\beta$ at $1$ and $1-\tfrac1\beta$ at $-1$.
\end{remarks}

\begin{proof}
(a) was proved in~\cite[Prop~5.3]{KN}.

Let us prove (b). First of all note that $\calC$ is a real orthogonal matrix. Equation~\eqref{characteristicPoly}, Lemma~\ref{OP_lemma}(i), and $\alpha_{2n-1}=1$ implies $\det\calC = -1$. Therefore $\calC$ has eigenvalues $e^{\pm i\theta_1},\ldots,e^{\pm i\theta_{n-1}},1,-1$.

Next we note that there is a one-to-one map between $2n\times 2n$ CMV matrices of the form $\calC(\alpha_0,\ldots,\alpha_{2n-2},1)$ with all $\alpha_j\in(-1,1)$ and spectral measures of the form~\eqref{formSpectralMeasureO2} with~\eqref{domainO3},~\eqref{domainO4}, and distinct $\theta_j$'s. Indeed, all Verblunsky coefficients are real if and only if the spectral measure is symmetric. And as we just saw, in this case $\alpha_{2n-1}=1$ is equivalent to $\det\calC = -1$, which for orthogonal matrices with simple spectrum is equivalent to having an eigenvalue at each $1$ and $-1$.

This map therefore induces a one-to-one mapping between $(\alpha_0,\ldots,\alpha_{2n-2})\in (-1,1)^{2n-1}$ and $(\theta_1,\ldots,\theta_{n-1},\mu_1,\ldots,\mu_n)$ in \eqref{domainO3},~\eqref{domainO4} with $\theta_1<\ldots<\theta_n$. 
So we can speak of the corresponding Jacobian. In fact, combining Propositions~\ref{thmOCMV} and~\ref{thmOSpectr}(b), we  obtain its value:
\begin{equation*}
\left|\det\frac{\partial(\alpha_0,\ldots,\alpha_{2n-2})}{\partial(\theta_1,\ldots,\theta_{n-1},\mu_1,\ldots,\mu_n)}\right|
= 2^{n-1}  
  \frac{\left| \Delta(2\cos\theta_1,\ldots,2\cos\theta_{n-1}) \right|^{2} \prod_{j=1}^{n-1} \sin^{2}\theta_j }{\sqrt{\mu_n \mu_{n+1}}\prod_{k=0}^{2n-2} (1-\alpha^2_k)^{\tfrac{2n-1-k}{2}-1}  }.
\end{equation*}

Using the form~\eqref{formSpectralMeasureO2} of the spectral measure, Lemma~\ref{OP_lemma}(v) gives
\begin{align*}
\prod_{j=0}^{2n-2} & (1- |\alpha_j|^2)^{(2n-j-1)/2} \\
&= \sqrt{\mu_n \mu_{n+1}}\prod_{j=1}^{n-1} \tfrac{\mu_j}{2} \left| \Delta\left( e^{i\theta_1},e^{-i\theta_1},\ldots,e^{i\theta_{n-1}},e^{-i\theta_{n-1}},1,-1  \right)\right| \\
&=2^{2n-1} \sqrt{\mu_n \mu_{n+1}}\prod_{j=1}^{n-1} \mu_j \prod_{j=1}^{n-1} |\sin \theta_j|^3 \left| \Delta\left( 2\cos\theta_1,\ldots,2\cos\theta_{n-1} \right) \right|^2.
\end{align*}

Now, starting with the distribution~\eqref{generalSOCMV2}, and using the change of variables along with the last computation produces the promised distribution of the spectral measure. The normalization constant $L_n(\beta)$ is a well-known integral (see, e.g.,~\cite[Lem~A.4]{KN}).

Parts (c) and (d) are similar.
\end{proof}

\begin{proposition}\label{thmUSpCMV}
The $2\times2$ block CMV-fication of the $\bbUSp(n)$ ensemble produces the block CMV matrix $\calC(\alpha_0,\ldots,\alpha_{n-1})$ with independent $2\times 2$ Verblunsky coefficients $\alpha_j$  distributed as
\begin{equation*}
\alpha_k \sim \Upsilon(4n-4k-1), \quad 0\le k \le n-1.
\end{equation*}
\end{proposition}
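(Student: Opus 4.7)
The plan is to parallel the proofs of Proposition~\ref{thmDysonCMV} and Proposition~\ref{thmOCMV}: change variables from the block Verblunsky coefficients to the $2\times 2$ matrix-valued spectral measure of $\bbUSp(n)$ computed in the preceding proposition, and then recognize the resulting density as the product of independent $\Upsilon(4n-4k-1)$ densities.

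The first step is to verify that the $2\times 2$ block CMV-fication of $\fC(U)$ for $U\in\bbUSp(n)$ automatically produces Verblunsky coefficients of the real-quaternionic form~\eqref{xMatrix}, i.e.\ $\alpha_k=\fC(q_k)$ for some real quaternion $q_k$.  This follows because the structural identity for $\fC(U)$ (see Appendix~\ref{sQuaternions}) together with the paired starting vectors $e_1,e_2$ is preserved at each stage of the block Gram--Schmidt algorithm, so every $2\times 2$ block appearing in~\eqref{CMV1} has the quaternionic form.  Consequently $\alpha_0,\ldots,\alpha_{n-2}$ live on the open real-quaternion unit ball ($4$ real parameters each) and $\alpha_{n-1}$ is a unit real quaternion ($3$ real parameters), giving $4n-1$ total free parameters, which matches the $n$ angles $\theta_j$, the $n$ phases $\psi_j$, and the $2n-1$ independent weights $\mu_j,\nu_j$ on the spectral side.

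Since a block CMV matrix is uniquely determined by its matrix-valued spectral measure, this sets up a bijection between such Verblunsky tuples and measures of the form~\eqref{USpmeasure}.  The main analytical step is the Jacobian of this bijection, which I would compute by combining the Szeg\H{o}-type identity of Lemma~\ref{OP_lemma}(v), applied to
\[
|\Delta(e^{i\theta_1},e^{-i\theta_1},\ldots,e^{i\theta_n},e^{-i\theta_n})| = 2^n\prod_{j=1}^n|\sin\theta_j|\cdot|\Delta(2\cos\theta_1,\ldots,2\cos\theta_n)|^2,
\]
with the quaternionic Jacobian identity gathered in the appendix on Jacobian determinants.  The doubling of the real parameter count relative to the scalar case produces a factor $\prod_{k=0}^{n-2}(1-|q_k|^2)^{(4n-4k-5)/2}$, matching exactly the weight $(1-|\vec a|^2)^{(\nu-5)/2}$ in Definition~\ref{theta}(c) when $\nu=4n-4k-1$.

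Putting these pieces together, the product density $\prod_{k=0}^{n-1}\Upsilon(4n-4k-1)$, transported via the Jacobian to spectral coordinates, reproduces the density from the preceding proposition, and uniqueness of the block CMV matrix from its spectral measure closes the argument.  The main obstacle is the Jacobian computation itself: one must correctly balance the $\prod\sin^2\theta_j$ arising from the paired eigenvalues $e^{\pm i\theta_j}$, the phase contributions $d\psi_j$, and the halving of real degrees of freedom imposed by the quaternionic constraint.  The exponents in Definition~\ref{theta}(c) are tailored to absorb precisely these contributions, so once the Jacobian is established, matching to the $\Upsilon$-densities and verifying normalization via~\cite[Cor~A.2]{KN} is direct.
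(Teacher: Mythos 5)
Your overall strategy (push the known law of the $2\times2$ matrix-valued spectral measure \eqref{USpmeasure} through a change of variables to the block Verblunsky coefficients, then match with the product of $\Upsilon$-densities) is a legitimately different route from the paper's, which instead mimics the direct, column-by-column induction of \cite[Prop 3.3]{KN} using Proposition~\ref{columns}(c): there the quaternionic structure $v_{2k}=Z\bar v_{2k-1}$ plus the fact that $v_{2k-1}$ is uniform on a sphere of real dimension $4(n-k+1)-1$ delivers $\alpha_{k-1}\sim\Upsilon(4(n-k+1)-1)$ directly (via the characterization of $\Upsilon(\nu)$ as the first four coordinates of a uniform point on the $\nu$-sphere, see the remark after Definition~\ref{theta} and \cite[Cor A.2]{KN}), with no Jacobian at all.

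The problem is that your route has a genuine gap precisely at its central step. The Jacobian of the map from the quaternionic $2\times2$ Verblunsky blocks $(\alpha_0,\dots,\alpha_{n-1})$ to the spectral data $(\theta_j,\psi_j,\mu_j,\nu_j)$ is nowhere established: the ``quaternionic Jacobian identity gathered in the appendix'' does not exist --- Appendix~\ref{sJacobians} contains only the scalar real/complex relation (Lemma~\ref{lemJacobian}) and the roots-to-coefficients Jacobian (Lemma~\ref{jacobian}), and Lemma~\ref{OP_lemma}(v) is likewise a scalar statement about scalar Verblunsky coefficients, not about $2\times2$ blocks. You acknowledge this yourself (``the main obstacle is the Jacobian computation itself''), but that computation is exactly the content that would make the argument a proof; block/matrix-valued Jacobians of this kind are the very thing the paper says it cannot handle (which is why the truncated $\bbUSp$ eigenvalue density is left open there). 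Moreover, the asserted matching is not even numerically consistent: for $\nu=4n-4k-1$ the $\Upsilon(\nu)$ density carries the exponent $(\nu-5)/2=2n-2k-3$, whereas you claim a factor $(1-|q_k|^2)^{(4n-4k-5)/2}$, which corresponds to $\nu=4n-4k$; this half-integer mismatch indicates the exponent bookkeeping was asserted rather than verified. To repair the argument along your lines you would have to actually derive the block analogue of the scalar change-of-variables (including the $\prod\sin^2\theta_j$, the $d\psi_j$ phases, and the quaternionic constraint), or else abandon the Jacobian route and argue as the paper does, inductively from Proposition~\ref{columns}(c).
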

\begin{remark}
Scalar CMV-fication of $\bbUSp(n)$ is not natural and does not lead to statistically independent Verblunsky coefficients.
\end{remark}
\begin{proof}
With the help of our Proposition~\ref{columns}(c), the proof for $\bbUSp(n)$ goes along the same lines as the proof for $\bbU(n)$ in~\cite[Prop 3.3]{KN}.
\end{proof}


\section{Matrix Models for Truncations}\label{sModels2}

In order to deal with the truncations, 
we will need the following lemma.

\begin{lemma}\label{truncations}
    Consider a unitary CMV matrix $\calC(\alpha_0,\ldots,\alpha_n)$ with $\alpha_n\in\partial\bbD$.
    \begin{itemize}
    \item If $n$ is even then the matrix obtained by removing the first row and column is equal to
    \begin{equation*}
      S^\dagger \calC\left(-\bar\alpha_{n-1}{\alpha}_n,-\bar\alpha_{n-2}{\alpha}_n,\ldots,-\bar\alpha_0{\alpha}_n\right)^T S
    \end{equation*}
    for some unitary $S$.
    \item If $n$ is odd then the matrix obtained by removing the first row and column is equal to
    \begin{equation*}
      S^\dagger \calC\left(-\bar{\alpha}_{n-1} \alpha_n,-\bar{\alpha}_{n-2} \alpha_n,\ldots,-\bar{\alpha}_0 \alpha_n\right) S
    \end{equation*}
    for some unitary $S$.
    \end{itemize}
\end{lemma}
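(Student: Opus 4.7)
My plan is to exploit the factorization $\calC = \calL\calM$ directly. Because $\Xi_{-1} = [1]$ is the top-left block of $\calM$, we have $\calM = [1] \oplus \calM_{\ge 2}$, where $\calM_{\ge 2}$ is block-diagonal with blocks $\Xi_1, \Xi_3, \ldots$; similarly, writing $\calL$ as a $2\times 2$ block matrix with the first row and column separated from the rest, its lower-right $n\times n$ corner is $\calL_{\ge 2} = [-\alpha_0] \oplus \Xi_2 \oplus \Xi_4 \oplus \cdots$. Multiplying out the block form of $\calC = \calL\calM$, the $n\times n$ submatrix that remains after deleting the first row and column is exactly
\[
C' = \calL_{\ge 2}\calM_{\ge 2}.
\]

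The parity of $n$ controls the block-shape of these two factors. For even $n$, $\calL_{\ge 2}$ has shape $(1,2,\ldots,2,1)$ (an ``$\calM$-shape''), while $\calM_{\ge 2}$ has shape $(2,\ldots,2)$ (an ``$\calL$-shape''); for odd $n$ the two shapes are $(1,2,\ldots,2)$ and $(2,\ldots,2,1)$ respectively. In either case $C'$ is not in standard CMV form, both because the factors appear in the ``wrong'' order and because the leading $1\times 1$ cap of $\calL_{\ge 2}$ is $[-\alpha_0]$ rather than $[1]$. To remedy this I would conjugate by $S = JD$, where $J$ is the antidiagonal permutation (reversing the basis) and $D$ is a diagonal unitary whose entries alternate between $1$ and $\bar\alpha_n$ in a parity-dependent pattern. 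Reversal by $J$ flips the order of the factors and, since $\Xi_k^T = \Xi_k$, also reverses the order of the blocks within each factor; conjugation by $D$ then implements the twist $\alpha\mapsto -\bar\alpha\alpha_n$ on each block, consistently, because $|\alpha_n|=1$ gives $\rho_{-\bar\alpha\alpha_n} = \rho_\alpha$.

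The even/odd split in the statement reflects a genuine asymmetry. For even $n$ the ``$\calM$-shape'' factor $\calL_{\ge 2}$ sits on the \emph{left} of $C'$, so after $J$-reversal one recovers the product $\calM^{(\beta)}\calL^{(\beta)}$---which equals $\calC(\beta)^T$ by symmetry of the $\Xi$-blocks---rather than $\calC(\beta) = \calL^{(\beta)}\calM^{(\beta)}$; this is the source of the transpose. For odd $n$ the shapes $(1,2,\ldots,2)$ and $(2,\ldots,2,1)$ are not palindromic, so $J$ alone already swaps them and $\calC(\beta)$ appears directly, without any transpose. In both cases the verification reduces to a block-by-block identification between $SC'S^\dagger$ and the target CMV matrix with Verblunsky coefficients $\beta_j = -\bar\alpha_{n-1-j}\alpha_n$.

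The main obstacle will be the parity-dependent bookkeeping: keeping careful track of which positions of $D$ carry $1$ versus $\bar\alpha_n$, and checking that these phases correctly merge the two $1\times 1$ boundary caps (involving $\alpha_0$ and $\alpha_n$) with the interior blocks. The small case $n=2$---for which $S = \bigl[\begin{smallmatrix}0&1\\\bar\alpha_n&0\end{smallmatrix}\bigr]$ and the alternation pattern is already visible---provides a reliable template for the general entry-by-entry verification.
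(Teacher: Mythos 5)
Your proposal is correct and takes essentially the same route as the paper: the paper also writes the truncation as $\hat{\calL}\hat{\calM}$ with $\hat{\calL}=\diag([-\alpha_0],\Xi[\alpha_2],\ldots)$, $\hat{\calM}=\diag(\Xi[\alpha_1],\Xi[\alpha_3],\ldots)$, and conjugates by the antidiagonal permutation $Q$ together with the alternating phase matrices $P_1=\diag(\alpha_n,1,\alpha_n,\ldots)$, $P_2=\diag(1,\alpha_n,1,\ldots)$, so your $S=JD$ coincides with its conjugator up to an irrelevant global phase, and the even/odd transpose dichotomy arises for exactly the shape reason you describe. Only be careful in the write-up that conjugation by $J$ does not literally flip the order of the two factors and that a diagonal conjugation cannot twist each factor's blocks by itself (it fixes their diagonal entries); as in the paper, the $\alpha_n$-phases must be distributed asymmetrically between the two factors (e.g.\ $P_2\hat{\calM}^T P_1^\dagger$ and $P_1\hat{\calL}^T P_2^\dagger$), which telescopes to a single conjugation of the product, so the verification should be done on the product rather than factor by factor.
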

\begin{proof}
    Suppose first that $n$ is even. Then $\calC(\alpha_0,\ldots,\alpha_n)$ with the first row and column removed is equal to $\hat{\calL} \hat{\calM}$, where
\begin{equation*}
\begin{aligned}
\hat{\calL} &=\diag\left( \left[-\alpha_0\right],\Xi[\alpha_2],\ldots,\Xi[\alpha_{n-2}],\left[\bar{\alpha}_n\right]  \right), \\
\hat{\calM} &=\diag\left( \Xi[\alpha_1],\Xi[\alpha_3],\ldots,\Xi[\alpha_{n-1}] \right),
\end{aligned}
\end{equation*}
where $\Xi[\alpha_k]$ is defined as in \eqref{CMV1}.

Now consider $(Q^\dagger P_2  \hat{\calM}^T P_1^\dagger Q)( Q^\dagger P_1 \hat{\calL}^T P_2^\dagger Q ) $,
where
\begin{equation*}
Q=
\begin{bmatrix}
0 & \cdots & 0 & 1 \\
0 & \cdots & 1 & 0 \\
\vdots & \ddots & \vdots & \vdots \\
1 & \cdots & 0 & 0
\end{bmatrix},
\end{equation*}
and $P_1=\diag(\alpha_n,1,\alpha_n,1,\ldots)$, $P_2=\diag(1,\alpha_n,1,\alpha_n,\ldots)$.

Direct calculations yield
\begin{equation*}
\begin{aligned}
Q^\dagger P_2  \hat{\calM}^T P_1^\dagger Q &= \diag(\Xi[-\bar\alpha_{n-1}{\alpha}_n],\ldots,\Xi[-\bar\alpha_{1}{\alpha}_n]), \\
Q^\dagger P_1 \hat{\calL}^T P_2^\dagger Q &= \diag([1],\Xi[-\bar\alpha_{n-2}{\alpha}_n],\ldots,\Xi[-\bar\alpha_{2}{\alpha}_n],[-\overline{\bar{\alpha}_0 {\alpha}_n}])
\end{aligned}
\end{equation*}
This shows  $Q^\dagger P_2  (\hat{\calL} \hat{\calM})^T P_2^\dagger Q  = (Q^\dagger P_2  \hat{\calM}^T P_1^\dagger Q)( Q^\dagger P_1 \hat{\calL}^T P_2^\dagger Q )$ is precisely equal to $\calC(-\bar\alpha_{n-1}{\alpha}_n,-\bar\alpha_{n-2}{\alpha}_n,\ldots,-\bar\alpha_{0}{\alpha}_n)$, cf. Definition \ref{CMV}.

Similarly, if $n$ is odd, we can see that $(Q^\dagger P_1^\dagger \hat{\calL} P_2 Q ) (Q^\dagger P_2^\dagger  \hat{\calM} P_1 Q)$ is equal to $\calC\left(-\bar{\alpha}_{n-1} \alpha_n,-\bar{\alpha}_{n-2} \alpha_n,\ldots,-\bar{\alpha}_0 \alpha_n\right)$.
\end{proof}

With this lemma and the results of Section~\ref{sModels} in hand, we immediately obtain matrix models for the truncations of unitary ensembles.

Indeed, choose a matrix $U$ from one of the ``scalar'' ensembles from Section~\ref{sSpectral} ($CUE, COE, \bbO, \bbSO, \bbO\setminus\bbSO$), and remove any row and the corresponding column.
By the invariance under the group action, it does not matter which row and column we delete. It will be convenient to delete the first ones. Then the truncated matrix $V$ is
$$
V=PUP^\dagger, \quad \mbox{ where }  P=\operatorname{proj}\left(\langle e_1\rangle^\perp \right).
$$

Recall now~\eqref{W} that $U=W\calC W^\dagger$ with $W e_1=W^\dagger e_1 = e_1$ which implies
$$
V=PW\calC W^\dagger P^\dagger = W P \calC P^\dagger W^\dagger.
$$
Therefore $V$ is unitarily equivalent to the CMV matrix form $\calC$ of $U$ but with the first row and column removed. Thus the results of the next proposition follow from the CMV models in Section~\ref{sModels}, the previous lemma, and the rotational (or real even) symmetry of the Verblunsky coefficients.

For quaternionic ensembles $CSE$ and $\bbUSp$ one needs to modify all of the arguments in the obvious way to accommodate quaternionic rows and columns.

\begin{proposition}\label{thmTruncModels}\leavevmode
\begin{itemize}
\item[(a)]
The eigenvalue distribution of a $COE(n+1)$ or $CUE(n+1)$ matrix with one row and column removed coincides with the eigenvalue distribution of $\calC(\alpha_0,\ldots,\alpha_{n-1})$ with independent Verblunsky coefficients
\begin{equation}\label{truncDyson}
\alpha_k \sim \Theta(\beta(k+1)+1), \quad 0\le k \le n-1,
\end{equation}
where $\beta=1$ for $COE$ and $\beta=2$ for $CUE$.

The eigenvalue distribution of a $CSE(n+1)$ matrix with one (quaternionic) row and column removed coincides with the eigenvalue distribution of $\calC(\alpha_0 I_2,\ldots,\alpha_{n-1} I_2)$ where $\alpha_k$ are independent and distributed according to~\eqref{truncDyson} with $\beta=4$.

\item[(b)] The eigenvalue distribution of a $\bbO(n+1)$, or $\bbSO(n+1)$, or $\bbO(n+1)\setminus\bbSO(n+1)$  matrix with one row and column removed coincides with the distribution of $\calC(\alpha_0,\ldots,\alpha_{n-1})$ with independent Verblunsky coefficients
\begin{equation*}\label{truncO}
\alpha_k \sim B\left( \tfrac{k+1}{2}, \tfrac{k+1}{2}\right), \quad 0\le k \le n-1.
\end{equation*}

\item[(c)] The eigenvalue distribution of a $\bbUSp(n+1)$ matrix with one (quaternionic) row and column removed coincides with the distribution of $\calC(\alpha_0,\ldots,\alpha_{n-1})$ with independent $2\times2$ matrix Verblunsky coefficients
\begin{equation*}\label{truncUSp}
\alpha_k \sim \Upsilon\left( 4k+7 \right), \quad 0\le k \le n-1.
\end{equation*}
\end{itemize}
\end{proposition}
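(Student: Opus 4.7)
The plan is to combine the CMV-fications of the full ensembles provided by Propositions~\ref{thmDysonCMV}, \ref{thmCSECMV}, \ref{thmOCMV}, and~\ref{thmUSpCMV} with the algebraic identity of Lemma~\ref{truncations} that describes what happens to a CMV matrix when one deletes its first row and column, and then to identify the joint law of the resulting Verblunsky coefficients via a symmetry/conditioning argument.

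As pointed out in the paragraph preceding the statement, if $U$ is drawn from one of the scalar ensembles and $V=PUP^\dagger$ denotes its truncation, then $V$ is unitarily equivalent to the CMV form $\calC(\alpha_0,\ldots,\alpha_n)$ of $U$ with its first row and column deleted; for the block ensembles $CSE$ and $\bbUSp$, the corresponding quaternionic truncation removes the first two complex rows and columns of the $2\times 2$ block CMV matrix. Applying Lemma~\ref{truncations} (together with its obvious extension to $2\times 2$ block CMV matrices) then expresses this truncated matrix, up to unitary conjugation and a transpose that preserves eigenvalues, as $\calC(\tilde\alpha_0,\ldots,\tilde\alpha_{n-1})$ with
\begin{equation*}
\tilde\alpha_k = -\alpha_{n-1-k}^\dagger\,\alpha_n, \qquad 0\le k\le n-1,
\end{equation*}
where ${}^\dagger$ reduces to ordinary complex conjugation in the scalar cases.

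The key probabilistic observation is that in every ensemble considered, the last coefficient $\alpha_n$ is drawn from the limiting member of the relevant distribution family: $\Theta(1)$, which is uniform on $\partial\bbD$, for $CUE$, $COE$, and $CSE$; $B(0,0)$, which is uniform on $\{-1,+1\}$, for each of the real orthogonal ensembles; and $\Upsilon(3)$, which is Haar on $SU(2)$, for $\bbUSp$. These are precisely the symmetry groups under which the laws of the earlier Verblunsky coefficients are invariant: $\Theta(\nu)$ is rotation-invariant, $B(s,s)$ is symmetric about the origin, and $\Upsilon(\nu)$ is invariant under both right $SU(2)$-multiplication and Hermitian conjugation (in each case the density depends only on $|\cdot|$, respectively $\det$). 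Consequently, conditional on $\alpha_n$, the $\tilde\alpha_k$ are independent with the same marginals as $\alpha_{n-1-k}$; since these conditional distributions do not depend on $\alpha_n$, unconditional independence and marginals follow. Re-indexing $j=n-1-k$ and reading off parameters from Section~\ref{sModels} yields exactly $\Theta(\beta(k+1)+1)$, $B\bigl(\tfrac{k+1}{2},\tfrac{k+1}{2}\bigr)$, and $\Upsilon(4k+7)$, as claimed.

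The main technical point I expect to require some care is verifying the block analogue of Lemma~\ref{truncations}: one must check that deleting the first two complex rows and columns of a $2\times 2$ block CMV matrix again produces a block CMV matrix, and in the $\bbUSp$ case one must be careful with the order of the noncommutative product $\alpha_{n-1-k}^\dagger\alpha_n$ when invoking the conditioning step. Both are direct adaptations of the scalar argument used to prove Lemma~\ref{truncations} and of the $U(1)$-invariance argument for $\Theta(\nu)$, but they constitute the only non-bookkeeping part of the proof.
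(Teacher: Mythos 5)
Your proposal is correct and follows essentially the same route as the paper: CMV-fy the full ensemble, note (since $W e_1 = e_1$) that the truncation is unitarily equivalent to the CMV form of $U$ with its first row and column deleted, invoke Lemma~\ref{truncations} (and its block analogue for $CSE$, $\bbUSp$), and use the rotational/sign/quaternionic invariance of the coefficient laws, conditioning on $\alpha_n$, to identify the coefficients $-\bar\alpha_{n-1-k}\alpha_n$ as independent with the stated distributions. The only cosmetic slip is that for $\bbSO(n+1)$ and $\bbO(n+1)\setminus\bbSO(n+1)$ the last coefficient $\alpha_n$ is a deterministic sign rather than $B(0,0)$, which changes nothing since your conditioning argument handles a point mass equally well.
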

\begin{remarks}
1. From the aesthetic considerations, one might prefer to look at the above distributions as $B(\tfrac{1(k+2)-1}{2},\tfrac{1(k+2)-1}{2})$, $\Theta(2(k+2)-1)$, $\Upsilon(4(k+2)-1)$ for the truncations of the compact groups $\bbO$, $\bbU$, and $\bbUSp$, respectively. Note $B$ has an extra factor of $\tfrac12$ due to the historical definition of the Beta function.

2. These CMV matrices are no longer unitary as the last coefficient is no longer unimodular (with probability 1), see the remark after Definition~\ref{CMV}.

3. These models are efficient for simulation purposes.
\end{remarks}

\section{Eigenvalue Distribution of the Truncations}\label{sEigenvalues}
\subsection{Truncations of  circular $\beta$-ensembles}\label{ssEigenvaluesDyson}

\begin{theorem}\label{thm1}
Choose Verblunsky coefficients $\alpha_k$ to be independent and $\Theta(\beta(k+1)+1)$-distributed $(0\le k \le n-1)$, that is, distributed in $\bbD$ with to the density
\begin{equation*}
\tfrac{\beta (k+1)}{2\pi} (1-|\alpha_k|^2)^{\frac{\beta}{2}(k+1)-1} \, d^2\alpha_{k}.
\end{equation*}

Then the eigenvalues $z_1,\ldots,z_n$ of the CMV matrix $\calC(\alpha_0,\ldots,\alpha_{n-1})$ are distributed in $\bbD^n$ with the joint law
\begin{equation}\label{eigenvalues}
\tfrac{\beta^n}{(2\pi)^n} \prod_{j,k=1}^n (1-z_j \bar{z}_k)^{\frac{\beta}{2}-1}  \prod_{j<k} |z_k-z_j|^2 \, d^2 z_1\ldots d^2 z_n.
\end{equation}
\end{theorem}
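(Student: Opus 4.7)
The plan is a direct change of variables from the Verblunsky coordinates $(\alpha_0,\ldots,\alpha_{n-1})\in\bbD^n$ to the eigenvalue tuple $(z_1,\ldots,z_n)$. By independence, the source density on $\bbD^n$ is
\begin{equation*}
p_\alpha(\alpha_0,\ldots,\alpha_{n-1}) = \prod_{k=0}^{n-1}\frac{\beta(k+1)}{2\pi}(1-|\alpha_k|^2)^{\frac{\beta}{2}(k+1)-1};
\end{equation*}
by \eqref{characteristicPoly}, $(\alpha_0,\ldots,\alpha_{n-1})$ determines $\Phi_n(z)=\prod_j(z-z_j)$, and hence the eigenvalues $z_j$ up to ordering. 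The computation rests on two $\beta$-independent ingredients.

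\textbf{Algebraic identity.} First I would establish by induction on $n$ the formula
\begin{equation*}
\prod_{k=0}^{n-1}(1-|\alpha_k|^2)^{k+1} \;=\; \prod_{j,k=1}^{n}(1-z_j\bar z_k).
\end{equation*}
The base case $n=1$ reduces to $1-|\alpha_0|^2 = 1-|z_1|^2$, since $\calC(\alpha_0)=[\bar\alpha_0]$. For the inductive step, the reciprocal-polynomial formula $\Phi_n^*(z) = \prod_k(1-z\bar z_k)$ lets me rewrite the right side as $\prod_j\Phi_n^*(z_j)$. Szeg\H{o}'s recurrence $\Phi_n=z\Phi_{n-1}-\bar\alpha_{n-1}\Phi_{n-1}^*$, evaluated at a root $z_j$ of $\Phi_n$, gives $\Phi_{n-1}^*(z_j) = z_j\Phi_{n-1}(z_j)/\bar\alpha_{n-1}$, and substituting into the companion recurrence $\Phi_n^*=\Phi_{n-1}^*-\alpha_{n-1}z\Phi_{n-1}$ yields
\begin{equation*}
\Phi_n^*(z_j) \;=\; \frac{z_j(1-|\alpha_{n-1}|^2)}{\bar\alpha_{n-1}}\,\Phi_{n-1}(z_j).
\end{equation*}
Multiplying over $j$, using $\prod_j z_j=(-1)^{n+1}\bar\alpha_{n-1}$ (from $\Phi_n(0)=-\bar\alpha_{n-1}$) together with the resultant symmetry $\prod_j\Phi_{n-1}(z_j)=\prod_i\Phi_n(w_i)$ and $\Phi_n(w_i)=-\bar\alpha_{n-1}\Phi_{n-1}^*(w_i)$ (where $w_i$ are the roots of $\Phi_{n-1}$), the expression collapses to $(1-|\alpha_{n-1}|^2)^n\prod_{i,k}(1-w_i\bar w_k)$, and the induction hypothesis closes the step.

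\textbf{Jacobian computation.} Next I would factor the map $\alpha\mapsto(z_1,\ldots,z_n)$ through the coefficients of $\Phi_n$. The coefficients-to-roots step contributes the standard Vandermonde Jacobian $\prod_{j<k}|z_j-z_k|^2$. For the step from Verblunsky coefficients to polynomial coefficients, Szeg\H{o}'s recurrence provides a triangular structure, since $\alpha_k$ enters only the coefficients of $\Phi_{k+1},\ldots,\Phi_n$. A direct calculation, combined with the identity above, should yield
\begin{equation*}
\left|\det\frac{\partial(\alpha_0,\ldots,\alpha_{n-1})}{\partial(z_1,\ldots,z_n)}\right|^2 \;=\; \frac{1}{n!}\cdot\frac{\prod_{j<k}|z_j-z_k|^2\;\prod_{k=0}^{n-1}(1-|\alpha_k|^2)}{\prod_{j,k=1}^n(1-z_j\bar z_k)},
\end{equation*}
with the factor $1/n!$ accounting for the $n!$ possible orderings of the eigenvalues.

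\textbf{Combining.} Multiplying $p_\alpha$ by this Jacobian and invoking the identity converts $\prod_k(1-|\alpha_k|^2)^{(k+1)\beta/2}$ into $\prod_{j,k}(1-z_j\bar z_k)^{\beta/2}$; the remaining factors match \eqref{eigenvalues} exactly, while the prefactor $\prod_{k=0}^{n-1}\tfrac{\beta(k+1)}{2\pi} = \tfrac{\beta^n n!}{(2\pi)^n}$ collapses against the $1/n!$ from the Jacobian to produce the claimed $\beta^n/(2\pi)^n$. The hard part will be the Jacobian computation: the triangular structure of Szeg\H{o}'s recurrence reduces it to a finite bookkeeping problem, but the detailed evaluation is exactly what the paper's Jacobian appendix is designed to handle.
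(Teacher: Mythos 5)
Your overall strategy coincides with the paper's: change variables from $(\alpha_0,\ldots,\alpha_{n-1})$ to $(z_1,\ldots,z_n)$ by factoring through the coefficients of $\Phi_n$, multiply the Vandermonde factor from the coefficients-to-roots step by the Jacobian of the $\alpha\mapsto\kappa$ step, and convert the remaining $\prod_k(1-|\alpha_k|^2)^{\text{power}}$ terms into $\prod_{j,k}(1-z_j\bar z_k)^{\text{power}}$ via the algebraic identity $\prod_{k=0}^{n-1}(1-|\alpha_k|^2)^{k+1}=\prod_{j,k}(1-z_j\bar z_k)$, which in the paper is Lemma~\ref{OP_lemma}(vi). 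Where you genuinely diverge is in your proof of that identity: the paper obtains it by rewriting $|\Delta|^2$ in terms of the orthonormal polynomials, invoking the Christoffel--Darboux formula to produce a $\bigl[\frac{1}{1-z_j\bar z_s}\bigr]$ determinant, and then applying the Cauchy determinant formula; you instead give a direct induction via Szeg\H{o}'s recurrence, using the companion relation at the roots $z_j$, the value $\Phi_n(0)=-\bar\alpha_{n-1}$, and the resultant symmetry $\prod_j\Phi_{n-1}(z_j)=\prod_i\Phi_n(w_i)$ (the sign $(-1)^{n(n-1)}$ is always $+1$). Both are correct; yours is more elementary and self-contained, the paper's is a one-liner once the classical Cauchy determinant is available. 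Two small cautions about the remaining piece. First, the map $\alpha\mapsto\kappa$ is not holomorphic (Szeg\H{o}'s recurrence contains $\bar\alpha_k$), so writing the Jacobian as $\bigl|\det\frac{\partial\alpha}{\partial z}\bigr|^2$ is notationally misleading; what you are actually computing is the real Jacobian $J_\bbR$, and the $|\cdot|^2$ applies only to the holomorphic roots-to-coefficients step (giving $|\Delta|^2$). The paper avoids this by working throughout with the $2n\times2n$ real Jacobian and proving $J_\bbR\bigl[\partial\kappa/\partial\alpha\bigr]=(-1)^n\prod_k(1-|\alpha_k|^2)^k$ via an explicit determinant expansion of the linearized recurrence~\eqref{kappaRecurrence}. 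Second, the $1/n!$ should not be folded into the Jacobian: the Jacobian formula is for the bijection onto an ordered fundamental domain, and the $1/n!$ appears separately when the density is written on unordered $\bbD^n$. Neither issue invalidates your computation — after unwinding via the identity, your claimed Jacobian equals $\frac{1}{n!}\cdot|\Delta|^2/\prod_k(1-|\alpha_k|^2)^k$, which is exactly the paper's $\frac{1}{n!}\,\bigl|J_\bbR[\partial\alpha/\partial z]\bigr|$, and your combining step then reproduces~\eqref{eigenvalues} — but the ``direct calculation\ldots\ should yield'' step is precisely the nontrivial determinant expansion (Lemma~\ref{lemma1}) that you would still need to carry out.
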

\begin{remarks}
1. For any $0<\beta<\infty$ these ensembles are unitarily equivalent to the circular $\beta$-ensembles with the first row and column removed.

2. Combining this with Proposition~\ref{thmTruncModels}, we obtain Theorem~\ref{introThm}.

3. As we will see in Section~\ref{sLoggas}, the law \eqref{eigenvalues} coincides with the Gibbs measure for a system of $n$ charges in the disk $\D$ where $\D$ and $\C\setminus\D$ are filled with dielectrics of suitably chosen susceptibility.
\end{remarks}


The proof is given in the end of this subsection after we establish a series of lemmas.

Given a sequence of Verblunsky coefficients $\alpha_0,\ldots,\alpha_{n-1}$, one may then form the associated sequence of orthogonal polynomials $\Phi_0,\ldots,\Phi_n$; see Appendix~\ref{sOPUC}. Recall from~\eqref{characteristicPoly} that the eigenvalues of $\calC(\alpha_0,\ldots,\alpha_{n-1})$ coincide with the zeros of $\Phi_n(z)$, which we shall denote by $z_1,\ldots,z_n$. Note that the mapping from the zeros $(z_1,\ldots,z_n)\in\D^n$ to the Verblunsky coefficients $(\alpha_0,\ldots,\alpha_{n-1})\in\bbD^n$ is an $n$-fold branched covering. This follows from~\cite[Thm 1.7.5]{OPUC1}.  To compute the Jacobian of this change of variables, it will be useful to introduce an intermediate set of variables. Let $\kappa_l^{(k)}$ be the coefficients of the $k$-th orthogonal polynomial $\Phi_k$:
\begin{equation}\label{phiK}
\Phi_k(z)=z^k+\kappa_1^{(k)} z^{k-1}+\ldots+\kappa_{k-1}^{(k)} z+\kappa_k^{(k)}.
\end{equation}

The Jacobian of the mapping from roots of a polynomial to its coefficients is well known; we reproduce it here as Lemma~\ref{jacobian}(b).  Viewing complex numbers as
ordered pairs of real numbers, we have
\begin{equation}\label{DysonJacobian1}
  J_\bbR\left[\frac{\partial(\kappa^{(n)}_1,\ldots,\kappa^{(n)}_n)}{\partial(z_1,\ldots,z_n)}\right] = | \Delta(z_1,\ldots,z_n)|^2.
\end{equation}

\begin{lemma}\label{lemma1}
The Jacobian $J_\bbR\left[\frac{\partial(\kappa^{(n)}_1,\ldots,\kappa^{(n)}_n)}{\partial(\alpha_0,\ldots,\alpha_{n-1})}\right]$  is equal to $(-1)^n \prod_{k=0}^{n-1} (1-|\alpha_k|^2)^k$.
\end{lemma}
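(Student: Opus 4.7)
The plan is to iterate Szegő's recurrence
\[
\Phi_n(z)=z\Phi_{n-1}(z)-\bar\alpha_{n-1}\Phi_{n-1}^*(z),
\]
which, after reading off coefficients via \eqref{phiK}, gives the explicit update formulas
\begin{align*}
\kappa^{(n)}_k &= \kappa^{(n-1)}_k - \bar\alpha_{n-1}\,\overline{\kappa^{(n-1)}_{n-k}},\qquad 1\le k\le n-1,\\
\kappa^{(n)}_n &= -\bar\alpha_{n-1}.
\end{align*}
Since $\alpha_{n-1}$ enters only at this final stage, the chain rule factors the total Jacobian as $J_n=J_n^{\mathrm{step}}\cdot J_{n-1}$, where $J_n^{\mathrm{step}}$ denotes the real Jacobian of $(\kappa^{(n-1)}_1,\ldots,\kappa^{(n-1)}_{n-1},\alpha_{n-1})\mapsto(\kappa^{(n)}_1,\ldots,\kappa^{(n)}_n)$. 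It therefore suffices to show that $J_n^{\mathrm{step}}=-(1-|\alpha_{n-1}|^2)^{n-1}$ for each $n\ge 1$, and then iterate.

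To compute $J_n^{\mathrm{step}}$, I would exploit a block-triangular structure: since $\kappa^{(n)}_n=-\bar\alpha_{n-1}$ depends only on $\alpha_{n-1}$ via the map $\alpha\mapsto -\bar\alpha$ (of real Jacobian $-1$), one gets $J_n^{\mathrm{step}}=-J'$, where $J'$ is the Jacobian of the linear map $(\kappa^{(n-1)}_j)_{j=1}^{n-1}\mapsto(\kappa^{(n)}_k)_{k=1}^{n-1}$ with $\alpha_{n-1}=p+iq$ held fixed. Because $\kappa^{(n)}_k$ involves only the inputs indexed by $k$ and $n-k$, this linear map decomposes into blocks associated to the pairs $\{k,n-k\}$. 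Each off-diagonal pair ($k\ne n-k$) contributes a $4\times 4$ real block of the form $\bigl(\begin{smallmatrix}I_2 & R\\ R & I_2\end{smallmatrix}\bigr)$, where $R=\bigl(\begin{smallmatrix}-p & q\\ q & p\end{smallmatrix}\bigr)$ satisfies $R^2=|\alpha_{n-1}|^2 I_2$, so its determinant equals $\det(I_2-R^2)=(1-|\alpha_{n-1}|^2)^2$.

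The main bookkeeping nuisance I anticipate is the parity of $n$: for even $n$, the pair $\{k,n-k\}$ collapses into a singleton at $k=n/2$, yielding a self-paired $2\times 2$ block $\bigl(\begin{smallmatrix}1-p & q\\ q & 1+p\end{smallmatrix}\bigr)$ of determinant $1-|\alpha_{n-1}|^2$. Multiplying the $(n-2)/2$ paired blocks with this singleton still gives $(1-|\alpha_{n-1}|^2)^{n-1}$, matching the product of $(n-1)/2$ paired blocks in the odd-$n$ case. Hence $J_n^{\mathrm{step}}=-(1-|\alpha_{n-1}|^2)^{n-1}$ in both parities, and iterating produces
\[
J_n=\prod_{k=1}^{n}J_k^{\mathrm{step}}=(-1)^n\prod_{k=0}^{n-1}(1-|\alpha_k|^2)^k,
\]
as claimed.
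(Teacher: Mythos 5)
Your proposal is correct and follows essentially the same route as the paper's proof: the coefficient recurrence $\kappa^{(n)}_j=\kappa^{(n-1)}_j-\bar\alpha_{n-1}\bar\kappa^{(n-1)}_{n-j}$ from \eqref{kappaRecurrence}, a one-step Jacobian equal to $-(1-|\alpha_{n-1}|^2)^{n-1}$, and induction. The only difference is organizational: the paper evaluates the step determinant after complexifying via Lemma~\ref{lemJacobian} (the $2n\times 2n$ determinant in the variables $\kappa^{(n-1)},\bar\kappa^{(n-1)},\alpha_{n-1},\bar\alpha_{n-1}$, expanded along the last two columns and rearranged into $2\times 2$ blocks $\bigl(\begin{smallmatrix}1 & -\alpha_{n-1}\\ -\bar\alpha_{n-1} & 1\end{smallmatrix}\bigr)$), whereas you compute the same determinant directly in real coordinates through the $\{k,n-k\}$-paired blocks, which is equally valid and yields identical bookkeeping.
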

\begin{proof}[Proof of Lemma \ref{lemma1}]
From \eqref{recurrence} we can immediately see that
\begin{equation}\label{kappaRecurrence}
\kappa_j^{(n)} = \kappa_j^{(n-1)} - \bar{\alpha}_{n-1} \bar{\kappa}_{n-j}^{(n-1)}, \quad j=1,\ldots,n,
\end{equation}
where we assign $\kappa_n^{(n-1)}=0$ and $\bar{\kappa}_{0}^{(n-1)}=1$. Therefore applying Lemma~\ref{lemJacobian}, we obtain
\begin{multline}\label{kappaToAlpha}
J_\bbR\left[\tfrac{\partial(\kappa^{(n)}_1,\ldots,\kappa^{(n)}_n)}{\partial(\kappa_1^{(n-1)},\ldots,\kappa_{n-1}^{(n-1)},\alpha_{n-1})}\right]
=
\\
\det
\begin{bmatrix}
1 & 0 &  \cdots  & 0 & -\alpha_{n-1} & 0 & 0 \\
0 & 1 & \cdots & -\bar{\alpha}_{n-1} & 0 & 0 & 0 \\
\vdots & \vdots &  \ddots & \vdots & \vdots & \vdots & \vdots \\
0 & -\alpha_{n-1} &  \cdots & 1 & 0 & 0 & 0 \\
-\bar{\alpha}_{n-1} & 0  & \cdots  & 0 & 1 & 0 & 0 \\
0 & -\kappa_{n-1}^{(n-1)} &  \cdots & 0 & -\kappa_{1}^{(n-1)} & 0 & -1 \\
-\bar{\kappa}_{n-1}^{(n-1)} & 0 & \cdots & -\bar{\kappa}_{1}^{(n-1)} & 0 &  -1 & 0
\end{bmatrix}.
\end{multline}
Expanding this determinant along the last two columns and then rearranging the remaining rows and columns we obtain a block diagonal matrix with $n-1$ blocks $$\left[
\begin{array}{cc}
1 & -\alpha_{n-1} \\
-\bar{\alpha}_{n-1} & 1
\end{array}
\right]
$$
on the diagonal. Therefore~\eqref{kappaToAlpha} simplifies to $ -(1-|\alpha_{n-1}|^2)^{n-1}$ (minus sign comes from expanding along the last two columns).

The lemma now follows by induction.
\end{proof}

\begin{lemma}\label{lemma2}
The Jacobian $J_\bbR\left[ \frac{\partial(\alpha_0,\ldots,\alpha_{n-1})}{\partial(z_1,\ldots,z_n)} \right]$  is equal to $\frac{| \Delta(z_1,\ldots,z_n)|^2}{(-1)^n \prod_{k=0}^{n-1} (1-|\alpha_k|^2)^k}$.
\end{lemma}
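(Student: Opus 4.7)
The plan is to obtain this as an immediate consequence of the chain rule applied to the two factorizations already established. Consider the composite map
\[
(z_1,\ldots,z_n) \;\longmapsto\; (\kappa_1^{(n)},\ldots,\kappa_n^{(n)}) \;\longmapsto\; (\alpha_0,\ldots,\alpha_{n-1}),
\]
where the first arrow sends the roots of $\Phi_n$ to its coefficients as in \eqref{phiK}, and the second arrow inverts the map of Lemma~\ref{lemma1} (which is a local diffeomorphism wherever $|\alpha_k|<1$ for $0\le k\le n-1$, since the Jacobian computed there is nonzero on this set).

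First, I would invoke the multiplicative chain rule for real Jacobian determinants on the outermost composition to write
\[
J_\bbR\!\left[\tfrac{\partial(\kappa_1^{(n)},\ldots,\kappa_n^{(n)})}{\partial(z_1,\ldots,z_n)}\right]
=
J_\bbR\!\left[\tfrac{\partial(\kappa_1^{(n)},\ldots,\kappa_n^{(n)})}{\partial(\alpha_0,\ldots,\alpha_{n-1})}\right]
\cdot
J_\bbR\!\left[\tfrac{\partial(\alpha_0,\ldots,\alpha_{n-1})}{\partial(z_1,\ldots,z_n)}\right].
\]
The left-hand side is $|\Delta(z_1,\ldots,z_n)|^2$ by \eqref{DysonJacobian1} (that is, Lemma~\ref{jacobian}(b) applied to the roots-to-coefficients map). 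The first factor on the right-hand side is $(-1)^n \prod_{k=0}^{n-1}(1-|\alpha_k|^2)^k$ by the just-proven Lemma~\ref{lemma1}. Solving for the remaining factor yields exactly the stated formula.

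The only point requiring a brief justification is that the chain rule applies, i.e.\ that each intermediate map has nonvanishing Jacobian on the relevant open set, so that composition of (signed) determinants is legitimate and the inverse Jacobian is literally the reciprocal. This follows because the roots are assumed distinct (so $\Delta\ne 0$) and the Verblunsky coefficients satisfy $|\alpha_k|<1$ (so the Jacobian in Lemma~\ref{lemma1} is nonzero). I expect no real obstacle — the substantive calculation was done in Lemma~\ref{lemma1}, and the present lemma is essentially a bookkeeping step that combines the known root-to-coefficient Jacobian with the coefficient-to-Verblunsky Jacobian by inversion.
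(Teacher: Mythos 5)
Your proposal is correct and matches the paper's argument: the paper's proof of this lemma is literally "immediate from \eqref{DysonJacobian1} and Lemma~\ref{lemma1}," i.e.\ exactly the chain-rule bookkeeping you spell out, dividing $|\Delta(z_1,\ldots,z_n)|^2$ by $(-1)^n\prod_{k=0}^{n-1}(1-|\alpha_k|^2)^k$. Your extra remarks on local invertibility (distinct roots, $|\alpha_k|<1$) are fine and only make explicit what the paper leaves implicit.
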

\begin{proof}
This is immediate from~\eqref{DysonJacobian1} and Lemma \ref{lemma1}.
\end{proof}

Now we can prove Theorem \ref{thm1}.
\begin{proof}[Proof of Theorem \ref{thm1}]
The joint distribution of $\alpha_0,\ldots,\alpha_{n-1}$ is
\begin{equation*}
\tfrac{\beta^n n!}{(2\pi)^n} \prod_{k=0}^{n-1} (1-|\alpha_k|^2)^{\frac{\beta}{2}(k+1)-1} \, d^2\alpha_0 \ldots d^2\alpha_{n-1}.
\end{equation*}
Using Lemmas \ref{lemma2} and \ref{OP_lemma}(vi) we see that this induces the following measure on $z_1,\ldots,z_n$:
\begin{equation*}
\begin{aligned}
\tfrac{1}{n!} \tfrac{\beta^n n!}{(2\pi)^n} &\left( \prod_{k=0}^{n-1} (1-|\alpha_k|^2)^{\frac{\beta}{2}(k+1)-1} \right)
\tfrac{| \Delta(z_1,\ldots,z_n)|^2}{\prod_{k=0}^{n-1} (1-|\alpha_k|^2)^k}
\, d^2 z_1 \ldots d^2 z_n  \\
&= \tfrac{\beta^n}{(2\pi)^n}  | \Delta(z_1,\ldots,z_n)|^2 \prod_{k=0}^{n-1} (1-|\alpha_k|^2)^{(\frac{\beta}{2}-1)(k+1)}
\, d^2 z_1 \ldots d^2 z_n \\
&= \tfrac{\beta^n}{(2\pi)^n}  | \Delta(z_1,\ldots,z_n)|^2 \prod_{j,k=1}^n (1-z_j \bar{z}_k)^{\frac{\beta}{2}-1}
\, d^2 z_1 \ldots d^2 z_n.
\end{aligned}
\end{equation*}
Note that we removed the ordering of $z_j$'s which resulted in the $\tfrac{1}{n!}$ factor.
\end{proof}

\subsection{Truncation of the orthogonal $\beta$-ensemble}\label{ssEigenvaluesOrtho}

Truncations of the orthogonal group, and more generally of the orthogonal $\beta$-ensemble, have real eigenvalues with non-zero probability.  Correspondingly, we decompose the set of possible systems of eigenvalues as
$$
D:=\bigcup_{L+2M = n,L\ge0, M\ge 0} D_{L,M},
$$
where
\begin{multline}\label{setD}
D_{L,M} := \left\{ (x_1,\ldots,x_L,x_{L+1}\pm iy_{L+1},\ldots,x_{L+M}\pm iy_{L+M}) \in \bbD^{L+2M}: \right.
\\ \left. -1< x_1< x_2<\ldots< x_L<1, \right.
\left. -1<x_{L+1}<\ldots < x_{L+M} <1, \right. \\
\left. y_{L+1}>0,\ldots, y_{L+M}>0 \right\}.
\end{multline}

Assume we are given a function $f:\bbC^n\to\bbC^n$ that is non-negative on $D$ and invariant under the permutations of its $n$ arguments.  Let us define $f(z_1,\ldots,z_n) |d z_1\wedge dz_2 \wedge\ldots\wedge d z_n|$ to be the distribution on $D\subset \bbD^n$ determined by
\begin{multline*}
\int_{\bbD^n} f(z_1,\ldots,z_n) |d z_1\wedge dz_2 \wedge\ldots\wedge d z_n| \\ := \sum_{L+2M=n} 2^M \int_{D_{L,M}} f(x_1,\ldots,x_L,x_{L+1}\pm iy_{L+1},\ldots,x_{L+M}\pm iy_{L+M}) \\
\times dx_1\ldots dx_{L+M} dy_{L+1}\ldots dy_{L+M}.
\end{multline*}

Informally one should think of
each of the summands on the right hand side as the distribution conditional on the event that the matrix has $L$ real eigenvalues and $M$ complex-conjugate pairs with $L+2M=n$.  Each of these distributions is obtained by plugging into the left-hand side $dz_k=dx_k+i dy_k$, taking the wedge product while keeping track  which of the eigenvalues are real or complex, and then taking the absolute value of the constant. Thus for each real eigenvalue $z_k$, $dz_k$ becomes just the real Lebesgue measure $dx_k$ on the real line, and for a complex-conjugate pair $z_k$, $\bar{z}_k$, we get $\left|(dx_k+idy_k)\wedge(dx_k-idy_k) \right| = 2 dx_k dy_k$. This corresponds to the factor $2^M$ on the right-hand side of the definition.


More formal and careful discussion of these measures can be found, e.g., in the Borodin--Sinclair paper~\cite[Sects 2--3]{BorSin}.

\begin{theorem}\label{thm_beta}
Given $\beta>0$, $a,b>-1$, let the coefficients $\alpha_k$, $0\le k \le n-1$, be distributed as follows:
\begin{equation}\label{jointAlphaOrtho}
\alpha_k\sim \left\{
\begin{matrix}
B\left(\tfrac{\beta k}{4}+a+1,\tfrac{\beta k}{4}+b+1\right), & \mbox{if } k \mbox{ is even,} \\
B\left(\tfrac{\beta (k-1)}{4}+a+b+2,\tfrac{\beta (k+1)}{4}\right), & \mbox{if } k \mbox{ is odd.}
\end{matrix}
\right.
\end{equation}
Then eigenvalues of the CMV matrix $\calC(\alpha_0,\ldots,\alpha_{n-1})$ are distributed in $\bbD$ with the joint law
\begin{equation*}\label{eigenvaluesOrtho}
\tfrac{1}{P_n} \prod_{j,k=1}^n (1-z_j \bar{z}_k)^{\frac{\beta}{4}-\frac12}   \prod_{j=1}^n (1-z_j)^{a+\frac12-\frac\beta4} (1+z_j)^{b+\frac12-\frac\beta4} \prod_{j<k} |z_k-z_j| \, |d z_1\wedge dz_2 \wedge\ldots\wedge d z_n|,
\end{equation*}
where
\begin{equation}\label{P_n}
P_n = 2^{n(a+b+1) + \frac{\beta}{4}n(n-1)} \prod_{j=0}^{\lfloor \frac{n-1}{2} \rfloor} \frac{\Gamma(a+1+\frac{\beta}{2}j)\Gamma(b+1+\frac{\beta}{2}j) }{\Gamma(a+b+2+\frac{\beta}{2}(\lfloor\frac{n}{2}\rfloor+j))} \prod_{j=1}^{\lfloor \frac{n}{2} \rfloor} \Gamma(\tfrac{\beta}{2}j).
\end{equation}
\end{theorem}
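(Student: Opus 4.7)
The plan is to adapt the proof of Theorem~\ref{thm1} to the real-variable setting, transforming the joint law of $(\alpha_0,\ldots,\alpha_{n-1})$ through the intermediate coordinates $(\kappa_1^{(n)},\ldots,\kappa_n^{(n)})$ from~\eqref{phiK} and finally to the eigenvalues $z_1,\ldots,z_n$.  The joint density of the Verblunsky coefficients from~\eqref{jointAlphaOrtho} is an explicit product of Beta densities whose exponents depend on the parity of $k$: for $k$ even one has $(1-\alpha_k)^{\beta k/4+a}(1+\alpha_k)^{\beta k/4+b}$, and for $k$ odd one has $(1-\alpha_k)^{\beta(k-1)/4+a+b+1}(1+\alpha_k)^{\beta(k+1)/4-1}$.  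The task is to match these, after two changes of variables, with the asymmetric factors $(1-z_j)^{a+1/2-\beta/4}$, $(1+z_j)^{b+1/2-\beta/4}$ and $(1-z_j\bar z_k)^{\beta/4-1/2}$ appearing in the claimed density.

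For the Jacobian from $\alpha$ to $\kappa$, I will imitate Lemma~\ref{lemma1} in real variables.  The real form of~\eqref{kappaRecurrence} is $\kappa_j^{(n)}=\kappa_j^{(n-1)}-\alpha_{n-1}\kappa_{n-j}^{(n-1)}$, so the inductive step from $n-1$ to $n$ produces a block matrix of the shape $\bigl[\begin{smallmatrix} I_{n-1}-\alpha_{n-1}P & * \\ 0 & -1\end{smallmatrix}\bigr]$, where $P$ is the $(n-1)\times(n-1)$ permutation swapping $i\leftrightarrow n-i$.  Because $P^2=I$, its symmetric and antisymmetric eigenspaces have dimensions that differ by one depending on whether $i=n/2$ is a fixed point (i.e.\ whether $n$ is even), and a direct dimension count yields
\begin{equation*}
\Bigl|J_\bbR\Bigl[\tfrac{\partial(\kappa_1^{(n)},\ldots,\kappa_n^{(n)})}{\partial(\alpha_0,\ldots,\alpha_{n-1})}\Bigr]\Bigr|
=\prod_{k\text{ even}}(1-\alpha_k^2)^{k/2}\prod_{k\text{ odd}}(1-\alpha_k)^{(k+1)/2}(1+\alpha_k)^{(k-1)/2}.
\end{equation*}
The Jacobian from $\kappa$ to $z$ is the Vandermonde $|\Delta(z_1,\ldots,z_n)|$ by Lemma~\ref{jacobian}(b); the factors $2^M$ needed for each complex-conjugate pair are absorbed into the wedge measure $|dz_1\wedge\cdots\wedge dz_n|$ by its definition.

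The remaining step is to convert the surviving $\alpha$-factors into $z$-factors using three OPUC identities.  From Lemma~\ref{OP_lemma}(vi),
\begin{equation*}
\prod_{k=0}^{n-1}(1-\alpha_k^2)^{k+1}=\prod_{j,l=1}^n(1-z_j\bar z_l),
\end{equation*}
while evaluating the Szeg\H{o} recurrence~\eqref{OPUC} at $z=\pm1$ and using reality of the $\alpha_k$ (so that $\Phi_n^*(\pm1)=\pm\Phi_n(\pm1)$ up to the overall sign $(\pm1)^n$) yields
\begin{equation*}
\prod_{k=0}^{n-1}(1-\alpha_k)=\prod_{j=1}^n(1-z_j) \qtq{and} \prod_{k\text{ even}}(1+\alpha_k)\prod_{k\text{ odd}}(1-\alpha_k)=\prod_{j=1}^n(1+z_j).
\end{equation*}
A parity-by-parity comparison (which I have verified at the level of exponents) shows that distributing these three identities and multiplying by the $\alpha$-density divided by the Jacobian above recovers the claimed density up to normalization.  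The constant $P_n$ then arises by multiplying the explicit Beta normalizations from~\eqref{jointAlphaOrtho} and simplifying via the Legendre duplication formula to merge even-$k$ and odd-$k$ contributions, giving the form~\eqref{P_n}.  The main obstacle I anticipate is the real-variable Jacobian above: its factorization is asymmetric in $1\pm\alpha_k$ (in a way that itself depends on the parity of $k$), and this asymmetry must conspire with those of~\eqref{jointAlphaOrtho} and of the Szeg\H{o} evaluations at $\pm1$ so that every exponent lines up; without tracking each parity class of $k$ separately, the cancellations cannot be seen.
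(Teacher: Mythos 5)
Your proposal is correct and follows essentially the same route as the paper: the same intermediate coordinates $\kappa_j^{(n)}$, the same parity-dependent $\alpha\to\kappa$ Jacobian (your $\pm1$-eigenspace count for the flip $P$ reproduces exactly the paper's factors $(1-\alpha_k^2)^{k/2}$ for even $k$ and $(1-\alpha_k)^{(k+1)/2}(1+\alpha_k)^{(k-1)/2}$ for odd $k$), the Vandermonde Jacobian with the $2^M$ absorbed into the wedge measure, and the conversion of the surviving $\alpha$-factors to $z$-factors via Lemma~\ref{OP_lemma}(vi) and the evaluations at $\pm1$ that constitute Lemma~\ref{OP_lemma}(i). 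Two cosmetic slips only: the $\kappa\to z$ Jacobian $|\Delta(z_1,\ldots,z_n)|$ comes from Lemma~\ref{jacobian}(a) (part (b) would give $|\Delta|^2$, which is the circular case), and the constant $P_n$ is obtained by telescoping/reindexing the Gamma factors from the Beta normalizations rather than by the Legendre duplication formula.
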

\begin{remarks}
1. From our previous discussion it is evident that if $n$ is odd, then such a random matrix is unitarily equivalent to a $(n+1)\times (n+1)$ matrix from the orthogonal $\beta$-ensemble in Proposition~\ref{thmRealOrtho}(a) with the first row and column removed (for any $0<\beta<\infty$ and $a,b>-1$). If $a=b=-1+\beta/4$, then it is also unitarily equivalent to a $(n+1)\times (n+1)$ matrix from the ensemble in Proposition~\ref{thmRealOrtho}(b) with the first row and column removed (for any $0<\beta<\infty$). If $n$ is even and $a=b=-1+\beta/4$, then it is unitarily equivalent to a $(n+1)\times (n+1)$ matrix from the ensembles in Proposition~\ref{thmRealOrtho}(c) and (d) with the first rows and columns removed.

2. When $\beta=2, a=b=-1/2$, such a random matrix is unitarily equivalent to a matrix from $\bbO(n+1)$, or $\bbSO(n+1)$, or $\bbO(n+1)\setminus\bbSO(n+1)$ with any row and column removed, cf. Proposition~\ref{thmTruncModels}(b). 
\end{remarks}

We follow the same strategy as for the circular ensembles. As before, let~\eqref{phiK}. In the next lemma we compute the Jacobian determinant of the transformation from $z_j$'s to $\kappa_j^{(n)}$'s on each of the sets $D_{L,M}$.

\begin{lemma}
Suppose $z_1,\ldots, z_n$ are in $D_{L,M}$, $L+2M=n$, see \eqref{setD}. On this set we have that the mapping $(z_1,\ldots,z_n) \mapsto (\kappa^{(n)}_1,\ldots,\kappa^{(n)}_n)$ is injective, and
\begin{equation*}
\left| J\left[ \frac{\partial(\kappa^{(n)}_1,\ldots,\kappa^{(n)}_n)}{\partial(x_1,\ldots,x_L,x_{L+1},y_{L+1},x_{L+2},y_{L+2},\ldots,x_{L+M},y_{L+M})} \right] \right|
  = 2^M \Delta(z_1,\ldots,z_n).
\end{equation*}
\end{lemma}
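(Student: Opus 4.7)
The plan is to factor the map through the complex roots $(z_1,\ldots,z_n)\in\bbC^n$ and exploit the holomorphy of the root-to-coefficient correspondence $\Phi\colon(z_1,\ldots,z_n)\mapsto(\kappa_1^{(n)},\ldots,\kappa_n^{(n)})$. By Lemma~\ref{jacobian}(b) (restated in~\eqref{DysonJacobian1}), the real Jacobian of $\Phi$ equals $|\Delta(z_1,\ldots,z_n)|^2$; since $\Phi$ is holomorphic, the Cauchy--Riemann equations then force its complex derivative to satisfy $\det\nolimits_\bbC(\partial \kappa_j^{(n)}/\partial z_k) = \pm\Delta(z_1,\ldots,z_n)$.

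Next I would introduce the complex-linear parametrization $\sigma\colon\bbC^n\to\bbC^n$ sending $(x_1,\ldots,x_L,x_{L+1},y_{L+1},\ldots,x_{L+M},y_{L+M})$ to $(z_1,\ldots,z_n)$, defined by $z_k=x_k$ for $k\le L$ and $(z_{L+2j-1},z_{L+2j})=(x_{L+j}+i y_{L+j},\,x_{L+j}-i y_{L+j})$ for $1\le j\le M$. Its matrix in the natural bases is block-diagonal: an $L\times L$ identity followed by $M$ copies of $\bigl(\begin{smallmatrix} 1 & i \\ 1 & -i\end{smallmatrix}\bigr)$, each of determinant $-2i$. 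Hence $\det\nolimits_\bbC\sigma=(-2i)^M$, and the complex chain rule gives the complex Jacobian of $\Phi\circ\sigma$ as $\pm\Delta(z_1,\ldots,z_n)\cdot(-2i)^M$.

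The key step is the transfer from complex to real Jacobian. The composition $\Phi\circ\sigma$ is a holomorphic map $\bbC^n\to\bbC^n$ that carries the real subspace $\bbR^n$ (parametrizing $D_{L,M}$) into the real subspace $\bbR^n$ of the codomain, because a monic polynomial whose root multiset is conjugation-invariant has real coefficients. For any such holomorphic map, its $\bbC$-linear derivative matrix at a real point has real entries, so the real Jacobian of the restriction $\bbR^n\to\bbR^n$ at that point equals the complex Jacobian evaluated there. Taking absolute values gives $|J| = 2^M\,|\Delta(z_1,\ldots,z_n)|$, matching the stated identity (the right-hand side should be read with an implicit modulus, since $\Delta$ evaluated on $D_{L,M}$ is not in general real).

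Injectivity on $D_{L,M}$ follows from unique factorization of polynomials together with the strict orderings imposed in~\eqref{setD}. The delicate point is the complex-to-real transfer in the last step: each $(x_{L+j},y_{L+j})$ is a pair of real parameters controlling a conjugate pair of \emph{complex} root coordinates, so $\sigma$ must be treated as a $\bbC$-linear map on the complexified domain before the restriction is made. Once that setup is in place, the remaining work is a routine determinant computation.
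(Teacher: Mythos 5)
Your argument is correct and takes essentially the same route as the paper: factor the change of variables through the complex roots, use the root-to-coefficient Jacobian $\Delta(z_1,\ldots,z_n)$ (Lemma~\ref{jacobian}) together with the factor $2^M$ coming from the conjugate-pair parametrization, and pass to the real Jacobian using that the coefficients $\kappa_j^{(n)}$ are real on $D_{L,M}$. Your explicit justification of the complex-to-real transfer (holomorphy plus reality on $\bbR^n$ makes the complex Jacobian at real points equal the real Jacobian of the restriction) spells out a step the paper leaves implicit, and your remark that the right-hand side should be read with a modulus agrees with how the lemma is used afterwards.
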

\begin{remark}
In other words, the above lemma says that  the following change of variables is legitimate
$$
f(\ldots) \, \,  d\kappa^{(n)}_1 \ldots d\kappa^{(n)}_n = f(\ldots)\, |\Delta(z_1,\ldots,z_n)|\,\,  |d z_1\wedge dz_2 \wedge\ldots\wedge d z_n|.
$$
\end{remark}
\begin{proof}
  Note that the above change of variables can be viewed as a composition of two changes of variables:
\begin{equation*}
  \begin{aligned}
    (\kappa^{(n)}_1,\ldots,\kappa^{(n)}_n)&\mapsto (x_1,\ldots,x_L,x_{L+1}\pm iy_{L+1},\ldots,x_{L+M}\pm iy_{L+M}) \\ &\mapsto (x_1,\ldots,x_L,x_{L+1},y_{L+1},x_{L+2},y_{L+2},\ldots,x_{L+M},y_{L+M})
  \end{aligned}
\end{equation*}
But the (complex) Jacobian determinant for the first map is $\Delta(z_1,\ldots,z_n)$ by Lemma \ref{jacobian}(a), and the (complex) Jacobian for the second map is easily seen to be $2^M$ in the absolute value.
\end{proof}

\begin{lemma}
The Jacobian determinant of the $\bbR^n\to\bbR^n$ change of variables $(\alpha_j)_{j=0}^{n-1} \mapsto (\kappa_j^{(n)})_{j=1}^n$ is
\begin{equation*}
 J\left[ \frac{\partial(\kappa^{(n)}_1,\ldots,\kappa^{(n)}_n)}{\partial(\alpha_0,\ldots,\alpha_{n-1})} \right] =
  (-1)^n \prod_{\substack{0\le k\le n-1 \\ k\mbox{\footnotesize{ even}}}} (1-\alpha_k^2)^{k/2}
  \prod_{\substack{0\le k\le n-1 \\ k\mbox{\footnotesize{ odd}}}} (1-\alpha_k)(1-\alpha_k^2)^{(k-1)/2}.
\end{equation*}
\end{lemma}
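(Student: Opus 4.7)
The plan is to run an induction on $n$ mirroring the proof of Lemma~\ref{lemma1}, exploiting the Szegő recurrence but now specialized to the real-orthogonal setting where all $\alpha_k\in(-1,1)$ and, consequently, all $\kappa_j^{(n)}\in\mathbb{R}$. By the chain rule, writing $J_n$ for the Jacobian in question,
\begin{equation*}
J_n \;=\; J\!\left[\frac{\partial(\kappa_1^{(n)},\ldots,\kappa_n^{(n)})}{\partial(\kappa_1^{(n-1)},\ldots,\kappa_{n-1}^{(n-1)},\alpha_{n-1})}\right]\cdot J_{n-1},
\end{equation*}
so the task reduces to computing the single-step Jacobian at level $n-1\mapsto n$.

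For the single-step Jacobian, the recurrence \eqref{kappaRecurrence} becomes, in the real case, $\kappa_j^{(n)}=\kappa_j^{(n-1)}-\alpha_{n-1}\kappa_{n-j}^{(n-1)}$ for $j=1,\ldots,n-1$, together with $\kappa_n^{(n)}=-\alpha_{n-1}$ (using $\kappa_0^{(n-1)}=1$ and $\kappa_n^{(n-1)}=0$). I would write out the corresponding $n\times n$ matrix and expand along its last row, which contains only a single nonzero entry $-1$ in position $(n,n)$. This reduces the problem to evaluating
\begin{equation*}
-\det\bigl(I_{n-1}-\alpha_{n-1} P\bigr),
\end{equation*}
where $P$ is the $(n-1)\times(n-1)$ reverse-permutation matrix $P_{jl}=\delta_{l,n-j}$.

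Since $P$ is an involution, its spectrum consists of $\pm 1$ only, with the multiplicities dictated by the parity of $n-1$: if $n-1=2m$, then $P$ has $m$ eigenvalues $+1$ and $m$ eigenvalues $-1$, giving $\det(I-\alpha_{n-1}P)=(1-\alpha_{n-1}^2)^m$; if $n-1=2m+1$, the middle basis vector is fixed by $P$, producing $m+1$ eigenvalues $+1$ and $m$ eigenvalues $-1$, hence $\det(I-\alpha_{n-1}P)=(1-\alpha_{n-1})(1-\alpha_{n-1}^2)^m$. Translating back in terms of $k=n-1$, the step contributes $-(1-\alpha_k^2)^{k/2}$ when $k$ is even and $-(1-\alpha_k)(1-\alpha_k^2)^{(k-1)/2}$ when $k$ is odd.

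Telescoping the chain-rule identity from $n$ down to $0$ (where $J_0$ is vacuous), the $n$ factors of $-1$ from the individual steps accumulate to $(-1)^n$, and the remaining product is exactly the claimed formula. The only real obstacle is clerical: keeping the sign from expanding along the last row, correctly identifying which $k$ yields which parity case, and verifying the $k=0$ convention (where the exponent is zero and the factor is trivially $1$), but none of this involves substantive difficulty beyond what appears already in Lemma~\ref{lemma1}.
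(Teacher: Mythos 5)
Your proposal is correct and follows essentially the same route as the paper: induct via the chain rule, compute the one-step Jacobian from the recurrence \eqref{kappaRecurrence}, expand along the row/column whose only nonzero entry is $-1$ (the paper writes the transposed matrix and expands along the last column), and evaluate $\det(I_{n-1}-\alpha_{n-1}P)$ with the parity-dependent extra factor $1-\alpha_{n-1}$ coming from the fixed middle entry of the reversal $P$. Your eigenvalue count for the involution $P$ reproduces exactly the paper's parity bookkeeping, so no gap remains.
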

\begin{proof}
Using the recurrence~\eqref{kappaRecurrence} it is easy to see that
\begin{multline}\label{kappaToAlpha2}
J\left[\tfrac{\partial(\kappa^{(n)}_1,\ldots,\kappa^{(n)}_n)}{\partial(\kappa_1^{(n-1)},\ldots,\kappa_{n-1}^{(n-1)},\alpha_{n-1})}\right]
\\
=
\det
\begin{bmatrix}
1 & 0 &  \cdots  & 0 & -\alpha_{n-1} & 0  \\
0 & 1 & \cdots & -{\alpha}_{n-1} & 0 & 0  \\
\vdots & \vdots &  \ddots & \vdots & \vdots & \vdots  \\
0 & -\alpha_{n-1} &  \cdots & 1 & 0 & 0 \\
-{\alpha}_{n-1} & 0  & \cdots  & 0 & 1 & 0 \\
-\kappa_{n-1}^{(n-1)} & -\kappa_{n-2}^{(n-1)} &  \cdots & -\kappa_{2}^{(n-1)} & -\kappa_{1}^{(n-1)} & -1
\end{bmatrix}.
\end{multline}
If $n$ is even then the $(\tfrac{n}{2},\tfrac{n}{2})$-entry of the latter matrix is $1-\alpha_{n-1}$.
We stress that this is an $n\times n$ matrix, in contrast to~\eqref{kappaToAlpha} which was $2n\times 2n$. Expansion along the last column allows to compute~\eqref{kappaToAlpha2}, and then induction finishes the proof.
\end{proof}

Now we are ready to prove Theorem \ref{thm_beta}.

\begin{proof}[Proof of Theorem \ref{thm_beta}]
Starting with the joint probability density~\eqref{jointAlphaOrtho} of $\alpha_k$'s and performing two changes of variables from the previous two lemmas, we obtain
\begin{multline*}
\tfrac{1}{P_n} \prod_{\substack{0\le k\le n-1 \\ k\mbox{\footnotesize{ even}}}} (1-\alpha_k)^{\frac{k\beta}{4}+a-\frac{k}{2}} (1+\alpha_k)^{\frac{k\beta}{4}+b-\frac{k}{2}}
\\
\times
  \prod_{\substack{0\le k\le n-1 \\ k\mbox{\footnotesize{ odd}}}} (1-\alpha_k)^{\frac{(k-1)\beta}{4}+a+b+1 - \frac{k+1}{2}} (1+\alpha_k)^{\frac{(k+1)\beta}{4}-1 -\frac{k-1}{2}}
 \\
 \times |\Delta(z_1,\ldots,z_n)|\,  |d z_1\wedge dz_2 \wedge\ldots\wedge d z_n|,
\end{multline*}
where the constant
\begin{multline*}
\tfrac{1}{P_n}  =\prod_{\substack{0\le k\le n-1 \\ k\mbox{\footnotesize{ even}}}} \frac{\Gamma(\tfrac{\beta}{2}k+a+b+2)}{2^{\frac{\beta k}{2}+a+b+1} \Gamma(\frac{\beta}{4}k+a+1)\Gamma(\frac{\beta}{4}k+b+1)}
\\
\times
\prod_{\substack{0\le k\le n-1 \\ k\mbox{\footnotesize{ odd}}}} \frac{\Gamma(\frac{\beta}{2}k+a+b+2)}{2^{\frac{\beta k}{2}+a+b+1} \Gamma(\frac{\beta}{4}(k-1)+a+b+2)\Gamma(\frac{\beta}{4}(k+1))}
\end{multline*}
can be easily seen to be equivalent to~\eqref{P_n}.

After some simplification, the distribution becomes
\begin{multline*}
\tfrac{1}{P_n}  \prod_{k=0}^{n-1} (1-\alpha_k^2)^{(k+1)(\frac\beta4-\frac12)} \prod_{k=0}^{n-1} (1-\alpha_k)^{a-\frac\beta4+\frac12}
\prod_{k=0}^{n-1} (1+(-1)^k \alpha_k)^{b-\frac\beta4+\frac12}
 \\
 \times
  |\Delta(z_1,\ldots,z_n)| \, |d z_1\wedge dz_2 \wedge\ldots\wedge d z_n|.
\end{multline*}
Now, an application of Lemma~\ref{OP_lemma}(i) and (vi) finishes the proof.
\end{proof}

%


\section{Matrix Models and Resonance Distribution for Systems with Non-ideal Coupling}\label{sNonperfect}
As argued by Fyodorov--Sommers in~\cite{FyoSom00} (see also~\cite{Fyo01,FyoSom}), the eigenvalues of the truncations represent resonances for the open systems for the special case of the ideal coupling. They proposed a more general model (for $CUE$) which for the 1 open channel case takes form
\begin{equation}\label{non-ideal}
V=U \operatorname{diag}\left(\sqrt{1-T_a},1,1,\ldots,1\right).
\end{equation}
Here $U$ is a  unitary random matrix, and $0\le T_a \le 1$ is the so-called transmission coefficient, which may be random, or deterministic. The case of $T_a=0$ corresponds to the closed system, and $T_a=1$ to the open system with the ideal coupling. Clearly the eigenvalues corresponding to $T_a=0$ and $T_a=1$ (ignoring the zero eigenvalue) coincide with the eigenvalues of the unperturbed unitary random matrix and of its truncation, respectively.

In fact, instead of the transmission coefficient it will be more convenient for us to work with the reflection coefficient
$
R_a:=\sqrt{1-T_a}.
$


Our method of reducing to the CMV form applies here as well. Indeed, let $U$ be $n \times n$ and its CMV form be $U=W\calC(\alpha_0,\ldots,\alpha_{n-1}) W^\dagger$ with $We_1=W^\dagger e_1 = e_1$. Then
$$
V = W \calC(\alpha_0,\ldots,\alpha_{n-1}) \operatorname{diag}\left(R_a,1,1,\ldots,1\right) W^\dagger.
$$
Using the idea in Lemma~\ref{truncations}, one then obtains that $V$ is  unitarily equivalent to
$$
\calC\left(-\bar\alpha_{n-2}{\alpha}_{n-1},-\bar\alpha_{n-3}{\alpha}_{n-1},\ldots,-\bar\alpha_0{\alpha}_{n-1}, R_a \alpha_{n-1}  \right)^T
$$
if $n$ is odd and to
$$
\calC\left(-\bar{\alpha}_{n-2} \alpha_{n-1},-\bar{\alpha}_{n-3} \alpha_{n-1},\ldots,-\bar{\alpha}_0 \alpha_{n-1}, R_a \alpha_{n-1}  \right)
$$
if $n$ is even. 

Combining this with the CMV models we developed earlier,  we immediately obtain the following proposition.


\begin{proposition}
Let $0\le R_a \le 1$ be independent of the random matrix $U$.
\begin{itemize}
\item[(a)]
The eigenvalue distribution of~\eqref{non-ideal} with $U\in COE(n), CUE(n), CSE(n)$ coincides with the eigenvalue distribution of $\calC(\alpha_0,\ldots,\alpha_{n-2},R_a e^{i\phi})$ with independent coefficients
\begin{equation}\label{non-idealDyson}
\alpha_k \sim \Theta(\beta(k+1)+1), \quad 0\le k \le n-2, \quad e^{i\phi} \in\Theta(1),
\end{equation}
where $\beta=1, 2, 4$ for $COE, CUE, CSE$, respectively.


\item[(b)] The eigenvalue distribution of~\eqref{non-ideal} with $U\in\bbO(n)$, or $\bbSO(n)$, or $\bbO(n)\setminus\bbSO(n)$ coincides with the distribution of $\calC(\alpha_0,\ldots,\alpha_{n-2},R_a \sigma)$ with independent coefficients
\begin{equation*}\label{non-idealOrtho}
\alpha_k \sim B\left( \tfrac{k+1}{2}, \tfrac{k+1}{2}\right), \quad 0\le k \le n-2,
\end{equation*}
and $\sigma\in B(0,0)$ for $\bbO(n)$; $\sigma=-1$ for $\bbSO(2k)$ or $\bbO(2k+1)\setminus \bbSO(2k+1)$; and $\sigma=1$ for $\bbSO(2k+1)$ or $\bbO(2k)\setminus\bbSO(2k)$.

\item[(c)] The eigenvalue distribution of a $\bbUSp(n)$ matrix with one row and column removed coincides with the distribution of $\calC(\alpha_0,\ldots,\alpha_{n-2},R_a q)$ with independent $2\times2$ matrix coefficients
\begin{equation*}\label{non-idealUSp}
\alpha_k \sim \Upsilon\left( 4k+7 \right), \quad 0\le k \le n-2, \quad q\in\Upsilon(3).
\end{equation*}
\end{itemize}
\end{proposition}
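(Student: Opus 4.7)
The plan is to combine the CMV reduction of $V$ already carried out in the paragraph immediately preceding the proposition with the Verblunsky-coefficient distributions established in Section~\ref{sModels}, and then exploit the invariance properties of the laws $\Theta(\nu)$, $B(\cdot,\cdot)$, and $\Upsilon(\nu)$ to read off the joint distribution of the new coefficients. Concretely, by the preceding discussion, $V$ is unitarily equivalent to $\calC(\tilde\alpha_0,\ldots,\tilde\alpha_{n-1})$ (transposed if $n$ is odd, which does not affect the eigenvalues), where
\begin{equation*}
\tilde\alpha_k = -\bar\alpha_{n-2-k}\,\alpha_{n-1}\quad(0\le k\le n-2),\qquad \tilde\alpha_{n-1} = R_a\,\alpha_{n-1}.
\end{equation*}
Thus it suffices to compute the joint law of $(\tilde\alpha_0,\ldots,\tilde\alpha_{n-1})$ from that of $(\alpha_0,\ldots,\alpha_{n-1})$ together with the independent $R_a$.

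For (a), Propositions~\ref{thmDysonCMV} and~\ref{thmCSECMV} give independent $\alpha_k\sim\Theta(\beta(n-1-k)+1)$; in particular $\alpha_{n-1}\sim\Theta(1)$ is uniform on $\partial\bbD$ and independent of $\alpha_0,\ldots,\alpha_{n-2}$. Because each $\Theta(\nu)$ density depends only on $|\alpha|$, both conjugation $\alpha\mapsto\bar\alpha$ and rotation $\alpha\mapsto e^{i\phi}\alpha$ preserve marginal laws as well as joint independence. Conditioning on $\alpha_{n-1}=e^{i\phi}$, the variables $\tilde\alpha_k=-\bar\alpha_{n-2-k}e^{i\phi}$ are therefore independent with $\tilde\alpha_k\sim\Theta(\beta(k+1)+1)$; since this conditional law does not depend on $\phi$, the same joint law holds unconditionally and is independent of $\tilde\alpha_{n-1}=R_a e^{i\phi}$, yielding~\eqref{non-idealDyson}. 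The $CSE$ case is identical after replacing scalar coefficients by $\alpha_k I_2$. For (b), Proposition~\ref{thmOCMV} supplies real $\alpha_k$'s with the symmetric laws $B(\tfrac{n-1-k}{2},\tfrac{n-1-k}{2})$, which are invariant under $\alpha\mapsto-\alpha$; the value of $\alpha_{n-1}\in\{\pm 1\}$ is prescribed by the remark following that proposition and is absorbed into $\sigma$. Part (c) is the parallel argument from Proposition~\ref{thmUSpCMV}, using the $SU(2)$-invariance of $\Upsilon(\nu)$ to absorb the unit real-quaternion factor $\alpha_{n-1}\sim\Upsilon(3)$.

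The only point requiring real care is the invariance step: one must verify that multiplication by $\alpha_{n-1}$ preserves both the marginal distribution of each $\bar\alpha_{n-2-k}$ and the joint independence of the family. This is transparent from Definition~\ref{theta} — scalar $\Theta$-densities depend only on $|\alpha|$, the symmetric beta densities are invariant under reflection, and the $\Upsilon(\nu)$ density corresponds, via the parametrization~\eqref{xMatrix}, to a radially weighted measure on the unit ball of $\bbR^4$ on which the $SU(2)$-action by multiplication acts as a rotation. The underlying CMV-reduction formula $\tilde\alpha_k=-\bar\alpha_{n-2-k}\alpha_{n-1}$ itself is a direct adaptation of Lemma~\ref{truncations}: right-multiplication by $\diag(R_a,1,\ldots,1)$ alters only the first block $\Xi_{-1}=[1]$ of $\calM$, replacing it by $[R_a]$, and then the same conjugations $Q^\dagger P_i\, \cdot\, P_j^\dagger Q$ used to prove Lemma~\ref{truncations} produce the displayed coefficients, with the new tail coefficient $\tilde\alpha_{n-1}=R_a\alpha_{n-1}$ arising from the $[R_a]$ block in place of the $[1]$ block that was excised in the truncation argument.
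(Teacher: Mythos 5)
Your proposal is correct and follows the same route as the paper: reduce to the CMV form via the conjugation in the paragraph preceding the proposition (the non-ideal analogue of Lemma~\ref{truncations}, which replaces the excised $[1]$ block of $\calM$ by $[R_a]$ and yields the tail coefficient $R_a\alpha_{n-1}$), then push the Verblunsky coefficient laws from Propositions~\ref{thmDysonCMV}, \ref{thmCSECMV}, \ref{thmOCMV}, \ref{thmUSpCMV} through the map $\tilde\alpha_k=-\bar\alpha_{n-2-k}\alpha_{n-1}$ using the rotational, reflection, and $\mathbb{SU}(2)$ invariance of $\Theta(\nu)$, $B(\nu,\nu)$, and $\Upsilon(\nu)$. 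You also spell out the conditioning-on-$\alpha_{n-1}$ step needed to pass from the invariance of the marginal laws to the independence of the new family, which the paper leaves implicit.
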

\begin{remarks}
1. As usual, $CSE$ has double multiplicity at each of the eigenvalues of~\eqref{non-idealDyson}.

2. When $R_a=0$ these CMV models coincide with the models in Section~\ref{sModels2} (with $n$ instead of $n+1$ that we had earlier), and when $R_a = 1$ they become the (reversed) models from Section~\ref{sModels} (the reversed models however do not have the spectral measures preserved but only the eigenvalues, so are less natural).
\end{remarks}

For (a) and (b) we can compute the eigenvalue distributions.


\begin{proposition}
(a) Let $0<\beta<\infty$ and
\begin{equation*}
\alpha_k \sim \Theta(\beta(k+1)+1), \quad 0\le k \le n-2, \quad e^{i\phi} \in\Theta(1).
\end{equation*}
Assume the reflection coefficient $R_a$ is independent of $\alpha_0,\ldots,\alpha_{n-2},\phi$ with a distribution on $(0,1)$ of the form
\begin{equation}\label{reflection}
F(R_a) \, \beta n (1-R_a^2)^{\frac{\beta}{2}n-1 } R_a dR_a,
\end{equation}
where $F$ is any function that makes~\eqref{reflection} a probability distribution.
Then the eigenvalues $z_1,\ldots,z_n$ of $\calC(\alpha_0,\ldots,\alpha_{n-2},R_a e^{i\phi})$  are distributed in $\bbD^n$ with the joint distribution
\begin{equation}\label{non-idealEigenvalues}
\tfrac{\beta^n}{(2\pi)^n} \prod_{j,k=1}^n (1-z_j \bar{z}_k)^{\frac{\beta}{2}-1}  \prod_{j<k} |z_k-z_j|^2 \, F\Bigg( \Big| \prod_{j=1}^n z_j \Big|  \Bigg)  \, d^2 z_1\ldots d^2 z_n.
\end{equation}

(b) Let $0<\beta<\infty$ and
\begin{equation*}
\alpha_k\sim B\left(\tfrac{\beta (k+1)}{4},\tfrac{\beta (k+1)}{4}\right), \quad 0\le k \le n-2, \quad  \sigma\sim B(0,0).
\end{equation*}
Assume the reflection coefficient $R_a$ is independent of $\alpha_0,\ldots,\alpha_{n-2},\sigma$ with a distribution on $(0,1)$ of the form
\begin{equation}\label{reflection2}
G(R_a) \, \tfrac{2\Gamma(\frac{\beta}{4}n+\frac12)}{\sqrt{\pi}\Gamma(\frac{\beta}{4}n)} (1-R_a^2)^{\frac{\beta}{4}n-1 } dR_a,
\end{equation}
where $G$ is any function that makes~\eqref{reflection2} a probability distribution.
Then the eigenvalues $z_1,\ldots,z_n$ of $\calC(\alpha_0,\ldots,\alpha_{n-2},R_a \sigma)$  are distributed in $\bbD^n$ with the joint distribution
\begin{equation}\label{non-idealEigenvalues2}
\tfrac{1}{P_n} \prod_{j,k=1}^n (1-z_j \bar{z}_k)^{\frac{\beta}{4}-\frac12}   \prod_{j=1}^n \frac{1}{\sqrt{1-z^2_j}} \prod_{j<k} |z_k-z_j| \, G\Bigg( \Big| \prod_{j=1}^n z_j \Big| \Bigg) \, |d z_1\wedge dz_2 \wedge\ldots\wedge d z_n|,
\end{equation}
where $P_n$ is~\eqref{P_n}.
\end{proposition}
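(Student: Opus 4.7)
The plan is to reduce both parts to Theorems~\ref{thm1} and~\ref{thm_beta}, since the present CMV ensembles differ from the earlier ones only in the distribution of the last Verblunsky coefficient $\alpha_{n-1}$, and that difference amounts to inserting a single multiplicative factor depending on $|\alpha_{n-1}|$ alone. The bridge between coefficients and eigenvalues is the OPUC identity $\Phi_n(0)=-\bar\alpha_{n-1}$ (an instance of Lemma~\ref{OP_lemma}(i)), which together with $\Phi_n(z)=\prod_j(z-z_j)$ yields
$$|\alpha_{n-1}|=\Big|\prod_{j=1}^n z_j\Big|.$$
This identity converts any factor of the form $F(|\alpha_{n-1}|)$ or $G(|\alpha_{n-1}|)$ on the coefficient side into the advertised $F\bigl(|\prod_j z_j|\bigr)$ or $G\bigl(|\prod_j z_j|\bigr)$ on the eigenvalue side.

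For part (a) I would first write $\alpha_{n-1}=R_a e^{i\phi}$ and use $d^2\alpha_{n-1}=R_a\,dR_a\,d\phi$ to transport the joint law of $(R_a,\phi)$ to a density on $\bbD$. Combining the uniformity of $e^{i\phi}$ with~\eqref{reflection} produces
$$F(|\alpha_{n-1}|)\,\tfrac{\beta n}{2\pi}(1-|\alpha_{n-1}|^2)^{\beta n/2-1}\,d^2\alpha_{n-1},$$
which is precisely the $\Theta(\beta n+1)$ density used in Theorem~\ref{thm1} for $\alpha_{n-1}$, weighted by $F(|\alpha_{n-1}|)$. Since the coefficients $\alpha_0,\ldots,\alpha_{n-2}$ have the same distributions as in Theorem~\ref{thm1}, the full joint law is the Theorem~\ref{thm1} law multiplied by $F(|\alpha_{n-1}|)$. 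Replaying the Jacobian calculation (Lemmas~\ref{lemma1}--\ref{lemma2}) verbatim and substituting via the key identity above delivers~\eqref{non-idealEigenvalues}.

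For part (b) I would first invoke the Legendre duplication formula $\Gamma(\tfrac{\beta n}{4})\Gamma(\tfrac{\beta n}{4}+\tfrac12)=\sqrt\pi\,2^{1-\beta n/2}\Gamma(\tfrac{\beta n}{2})$ to recognize that the symmetrized variable $\alpha_{n-1}=R_a\sigma$ has density
$$G(|\alpha_{n-1}|)\,\tfrac{\Gamma(\beta n/4+1/2)}{\sqrt\pi\,\Gamma(\beta n/4)}(1-\alpha_{n-1}^2)^{\beta n/4-1}\,d\alpha_{n-1}$$
on $(-1,1)$, i.e., the $B(\tfrac{\beta n}{4},\tfrac{\beta n}{4})$ density weighted by $G(|\alpha_{n-1}|)$. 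I would then check that the remaining hypotheses match Theorem~\ref{thm_beta} at the specialization $a=b=\tfrac{\beta}{4}-1$: the Beta parameters there collapse uniformly to $B(\tfrac{\beta(k+1)}{4},\tfrac{\beta(k+1)}{4})$ for every $0\le k\le n-1$ of either parity, and the resulting exponent $a+\tfrac12-\tfrac\beta4=-\tfrac12$ in Theorem~\ref{thm_beta} produces precisely the factor $\prod_j(1-z_j^2)^{-1/2}$ appearing in~\eqref{non-idealEigenvalues2}. A verbatim replay of the Jacobian argument from the proof of Theorem~\ref{thm_beta} together with the key identity then yields~\eqref{non-idealEigenvalues2}. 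The main obstacle I anticipate is the bookkeeping in part~(b), in particular verifying that the constant $P_n$ in~\eqref{non-idealEigenvalues2} agrees with the normalization furnished by Theorem~\ref{thm_beta} at $a=b=\tfrac\beta4-1$; once this (essentially routine) check is carried out via the duplication formula, the rest of the argument is simply a transcription of calculations already in Section~\ref{sEigenvalues}.
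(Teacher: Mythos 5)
Your proposal is correct and follows the same route as the paper: the paper's proof simply replays the Jacobian arguments of Section~\ref{sEigenvalues} (Theorems~\ref{thm1} and~\ref{thm_beta}, the latter at $a=b=\tfrac\beta4-1$) with the extra observation $R_a=|\alpha_{n-1}|=\bigl|\prod_{j=1}^n z_j\bigr|$ from Lemma~\ref{OP_lemma}(i), which is exactly your key identity. Your polar-coordinate rewriting in (a) and the duplication-formula matching of the $B(\tfrac{\beta n}{4},\tfrac{\beta n}{4})$ density in (b) are just the details the paper leaves implicit, and they check out.
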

\begin{remarks}
1. When $\beta=1,2,4$ in (a),~\eqref{non-idealEigenvalues} is the eigenvalue distribution of~\eqref{non-ideal} with $U\in COE(n),CUE(n),CSE(n)$, respectively. Also, for any $0<\beta<\infty$,~\eqref{non-idealEigenvalues} is the  eigenvalue distribution of~\eqref{non-ideal} with $U$ a matrix from the circular $\beta$-ensemble.

2. When $\beta =2$ in (b),~\eqref{non-idealEigenvalues2} is the eigenvalue distribution of~\eqref{non-ideal} with $U\in \bbO(n)$.  The analogue of Theorem~\ref{thm_beta} for general $a,b>-1$ can also be stated. We chose $a=b=-1+\beta/4$ to simplify the presentation.

3. When $F\equiv 1$ in (a) (or $G\equiv 1$ in (b)), we obtain that a system under the non-ideal coupling with such a reflection coefficient $R_a$ behaves as a system one size larger with ideal coupling. One should also recognize that~\eqref{reflection} with $F\equiv 1$ and ~\eqref{reflection2} with $G\equiv 1$ coincide with the distribution of $|\alpha_{n-1}|$ in Theorem~\ref{thm1} and Theorem~\ref{thm_beta}, respectively. Ultimately, this is a manifestation of the statistical independence of the Verblunsky coefficients.
\end{remarks}

\begin{proof}
The result is obvious from the arguments in Section~\ref{sEigenvalues}. The only additional fact we need to use is that $R_a=|\alpha_{n-1}| = \left| \prod_{j=1}^n z_j \right|$, see Lemma~\ref{OP_lemma}(i).
\end{proof}

\section{Log-gas Interpretation}\label{sLoggas}

In this section, we present an interpretation of the measure \eqref{eigenvalues} as the Gibbs measure for a configuration of charges in a certain geometry of dielectrics.  What follows was very much inspired by~\cite[Sect~15.8--15.9]{bForrester}.

Consider a system of $n$  particles of equal charge $+1$ that are confined to lie in the unit disk $\bbD$ of the complex plane. The unit disk $\bbD$ is filled with a homogeneous (linear) dielectric of permittivity $\epsilon_1$, while the rest of space $\bbC\setminus\bbD$ is filled with a homogeneous (linear) dielectric of permittivity $\epsilon_2$.   As usual, one may regard two-dimensional point charges as representing parallel lines of charge in a three-dimensional setting.

Recall (cf. \cite[\S7]{MR0121049}) that the electric potential due to a unit point charge at $z_0$ in the presence of dielectric media is the solution to
$- \nabla_z\cdot [\epsilon(z)\nabla_z V(z)] = \delta(z-z_0)$.  For the configuration described above, and $z_0\in\D$ the solution is
$$
V(z|z_0)=\begin{cases}
	-\tfrac{1}{2\pi\epsilon_1}\log |z-z_0| - \tfrac{1}{2\pi\epsilon_1} \tfrac{\epsilon_1-\epsilon_2}{\epsilon_1+\epsilon_2} \log|1-z \bar{z}_0| &\quad z\in \D, \\
		-\tfrac{1}{\pi(\epsilon_2+\epsilon_1)}\log |z-z_0| &\quad z\in \C\setminus\D.
	\end{cases}
$$
From this, one finds (cf. Problems~3 and~4 in \cite[\S7]{MR0121049}) that the polarization of the dielectrics exerts a force on the charge at $z_0\in\D$ of the form
$$
F = - \tfrac{1}{2\pi\epsilon_1} \tfrac{\epsilon_1-\epsilon_2}{\epsilon_1+\epsilon_2} \tfrac{z_0}{1-|z_0|^2}, \text{ which has potential } W(z_0) = - \tfrac{1}{4\pi\epsilon_1} \tfrac{\epsilon_1-\epsilon_2}{\epsilon_1+\epsilon_2} \log[1-|z_0|^2].
$$
This force pushes the charge toward the origin when $\epsilon_1>\epsilon_2>0$ and toward the boundary in the case $\epsilon_2>\epsilon_1>0$.

The above calculations reveal that the total energy of a system of $n$ charges located at the points $\{z_j\}_{j=1}^n \subseteq \D$ is then
$$
H = \sum_j W(z_j) + \tfrac12 \sum_{j\neq k} V(z_k|z_j),
$$
and consequently, the associated Gibbs measure at temperature $T$ has density
\begin{equation}\label{E:DielectricGibbs}
e^{-H/(k_BT)} = \prod_{j,k=1}^n (1-z_j \bar{z}_k)^{\frac{\alpha \gamma}{2}}  \prod_{j<k} |z_k-z_j|^\gamma
\end{equation}
with exponents
$$
\alpha = \tfrac{\epsilon_1-\epsilon_2}{\epsilon_1+\epsilon_2}
	\qtq{ and } \gamma = \tfrac{1}{2\pi\epsilon_1k_B T} .
$$

Note that any combination of $\gamma>0$ and $\alpha\in\R$ is within the realm of physical possibility.  In the traditional setting (as elaborated in \cite{MR0121049}, for example) the ratio $\epsilon_2/\epsilon_1$ can lie anywhere in the interval $[0,\infty]$, with the case $\epsilon_2/\epsilon_1=\infty$, for example, being realizable by using a conductor in the exterior region $\C\setminus\D$ and vacuum in $\D$.  On the other hand, the physical consistency of materials with negative (or zero) permittivity has been known for some time.  In recent years, such materials have even been manufactured; see, for example, \cite{Eng} for an introduction to the literature on this.

Comparing \eqref{eigenvalues} and \eqref{E:DielectricGibbs}, we see that the distribution of eigenvalues for truncated Dyson models coincides with that of a gas of charged particles in the presence of dielectrics; specifically, one chooses $\gamma=2$ and $\alpha =\frac\beta2-1$.

\section{Symmetric CMV Matrices}\label{sSymmetricCMV}

As we mentioned in Subsection~\ref{ssCMV}, in the case of a system with time-reversal invariance ($COE$ and $CSE$) reducing its scattering matrix $U$ to the CMV form is not natural since this breaks the initial symmetry. In this section we introduce a symmetric analogue of the CMV form which solves this issue. We will call these matrices~\eqref{SCMV1} ``symmetric CMV''.

In what follows let $\real T = \tfrac{1}{2}(T+T^\dagger)$ and $\imag T = \tfrac{1}{2i}(T-T^\dagger)$.

\begin{proposition}\label{thmSCMV}
Let $U$ be a unitary operator on $\ell^2(\bbZ_+)$ with cyclic vector $e_1$ and spectral measure~\eqref{spectralMeasure}. Then applying Gram--Schmidt orthonormalization procedure to $e_1, [\imag U ] e_1, [\real U ]e_1, [\imag (U^2)] e_1, [\real(U^2)] e_1, \ldots$ produces  a basis $\{s_k\}$ in which $U$ has the form
\begin{equation}\label{SCMV1}
\calS(\alpha_0,\alpha_1,\ldots):= \calN \calL \calN^T,
\end{equation}
where $\calL = \calL^T$ is as in~\eqref{CMV2}, and $\calN$ is the unitary matrix
\begin{align}
\label{SCMV2}
\calN &= \operatorname{diag}\left( [1],\Psi_1,\Psi_3,\ldots\right), \\
\label{SCMV3}
\Psi_k & =
\frac{1}{\sqrt{2(1-\real \alpha_k)}}
\left[
\begin{array}{cc}
i(1- \bar{\alpha}_k) & -i\rho_k \\
\rho_k & 1-\alpha_k
\end{array}
\right].
\end{align}

The matrix $\calS$ is $7$-diagonal and symmetric $\calS^T=\calS$.  In fact, $U= R \calS R^\dagger$ with $R^\dagger R = R R^\dagger = I$, $R e_1  = R^\dagger e_1 = e_1$. Moreover, if $U^T=U$, then $R$ is orthogonal.
\end{proposition}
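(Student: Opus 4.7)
The plan is to compare the basis $\{s_k\}$ produced by the Gram--Schmidt to the standard CMV basis $\{\chi_k\}$ and to exploit the algebraic identity $\calN^T\calN = \calM$ that collapses the resulting similarity transformation to the prescribed form.

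First, since $\imag U^j = \tfrac{1}{2i}(U^j-U^{-j})$ and $\real U^j = \tfrac{1}{2}(U^j + U^{-j})$, the flag
\[
K_k := \spann\{e_1, Ue_1, U^{-1}e_1,\ldots, U^k e_1, U^{-k}e_1\}
\]
coincides with the flag generated by the symmetrized sequence. Hence $s_1 = \chi_1 = e_1$ and, for each $k\ge 1$, the pairs $(s_{2k},s_{2k+1})$ and $(\chi_{2k},\chi_{2k+1})$ are both orthonormal bases of the two-dimensional space $K_k\ominus K_{k-1}$. The unitary $V$ satisfying $V\chi_i = s_i$ is therefore block-diagonal, with a $[1]$ block followed by $2\times 2$ unitary blocks $W_k$ on rows and columns $(2k,2k+1)$, and $\calS = V^\dagger \calC V$.

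Next I would identify $W_k$ explicitly. At the first level the CMV recursion gives
\[
Ue_1 = \bar\alpha_0\chi_1 + \rho_0\chi_2, \qquad U^{-1}e_1 = \alpha_0\chi_1 + \rho_0\alpha_1\chi_2 + \rho_0\rho_1\chi_3;
\]
forming $[\imag U]e_1$ and $[\real U]e_1$, subtracting their $\chi_1$-components, and normalizing identifies $W_1$ with $\Psi_1$ (up to the conjugation convention implicit in $V$). An analogous computation at level $k$ exploits that the residues of $[\imag U^k]e_1$ and $[\real U^k]e_1$ modulo $K_{k-1}$ have the same two-term structure as in the base case, with $\alpha_1$ replaced by $\alpha_{2k-1}$; an induction on $k$ then yields $W_k = \Psi_{2k-1}$, i.e., $V = \calN$. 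The algebraic heart of the argument is the $2\times 2$ identity
\[
\Psi_k^T\Psi_k = \begin{bmatrix}\bar\alpha_k & \rho_k \\ \rho_k & -\alpha_k\end{bmatrix} = \Xi_k,
\]
which one checks by direct matrix multiplication. It gives $\calN^T\calN = \calM$, so
\[
\calS = \calN\calL\calM\calN^\dagger = \calN\calL(\calN^T\calN)\calN^\dagger = \calN\calL\calN^T,
\]
using unitarity of $\calN$. Symmetry $\calS^T=\calS$ then follows from $\calL^T=\calL$ (each $\Xi_k$ is symmetric), and $7$-diagonality is bookkeeping: both $\calN$ and $\calL$ have support only at positions $(i,j)$ with $|i-j|\le 1$, so $\calN\calL\calN^T$ is supported at positions with $|i-j|\le 3$.

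Finally, $R$ is the change of basis from the standard basis of $\ell^2(\bbZ_+)$ to $\{s_k\}$, so $Re_1 = s_1 = e_1$ and $R^\dagger e_1 = e_1$ by unitarity. If $U^T = U$ then $U^\dagger = \bar U$ (using unitarity), so $\real U^k$ and $\imag U^k$ have real matrix entries in the standard basis for every $k$. Consequently the Gram--Schmidt input is a sequence of real vectors, and since Gram--Schmidt preserves reality every $s_k$ is real; $R$ is therefore a real orthogonal matrix. The main obstacle will be the inductive step identifying $W_k=\Psi_{2k-1}$ at every level: unlike the base case, the CMV-basis expansions of $[\imag U^k]e_1$ and $[\real U^k]e_1$ involve many terms, and one needs to carefully isolate the components in $\spann(\chi_{2k},\chi_{2k+1})$ and recognize the result as the shifted analogue of the base-case vector.
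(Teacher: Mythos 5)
Your route is sound and genuinely differs from the paper's in one respect: you identify the change of basis from the CMV basis $\{\chi_k\}$ to $\{s_k\}$ by working with the operator recursion and the flag $K_k$, whereas the paper passes to $L^2(d\mu)$, identifies $\{s_k\}$ with orthonormalized trigonometric polynomials, and pins down each $2\times2$ block from the explicit $z^{\pm n}$ coefficients of $\chi_{2n-1},\chi_{2n}$ together with positivity of the Gram--Schmidt leading coefficients (this avoids expanding $U^{\pm k}e_1$ altogether, which is exactly the difficulty you flag). Both arguments then pivot on the same algebraic fact, namely $\Psi_k^T\Psi_k=\Xi_k$, i.e.\ $\calN^T\calN=\calM$, which you state correctly and which checks out by direct multiplication. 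One small inconsistency to repair: with your definition $V\chi_i=s_i$ the matrix $V$ has entries $\langle\chi_i,s_j\rangle$, which is $\calN^\dagger$ in the paper's convention $\calN_{jk}=\langle s_j,\chi_k\rangle$; so ``$V=\calN$'' contradicts your (correct) final computation $\calS=\calN\calL\calM\calN^\dagger=\calN\calL\calN^T$, and you should fix one convention or the other. You also silently use $\calN\calN^\dagger=I$, so the unitarity of each $\Psi_k$ should be recorded (another one-line computation).

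The step you defer -- identifying $W_k$ at every level -- is the actual content of the proposition, so it must be supplied; fortunately the structural claim you assert is true and short to prove. In $0$-based indexing with $\chi_0=e_1$ and $K_{k-1}=\spann\{\chi_0,\ldots,\chi_{2k-2}\}$, the claim is that modulo $K_{k-1}$ one has $U^{k}e_1\equiv r_k\,\chi_{2k-1}$ and $U^{-k}e_1\equiv r_k\,(\alpha_{2k-1}\chi_{2k-1}+\rho_{2k-1}\chi_{2k})$ with $r_k=\rho_0\cdots\rho_{2k-2}>0$. This follows either by your induction, using the band structure of $\calC$ and $\calC^\dagger$ (only the entries $\calC_{2k-1,2k-3}=\rho_{2k-3}\rho_{2k-2}$, $\calC_{2k-2,2k-1}=\bar\alpha_{2k-1}\rho_{2k-2}$, $\calC_{2k-2,2k}=\rho_{2k-2}\rho_{2k-1}$ matter, and $U^{\pm1}K_{k-2}\subseteq K_{k-1}$), or in one stroke from the inner products $\langle\chi_{2k-1},z^k\rangle=\lVert\Phi_{2k-1}\rVert=r_k$, $\langle\chi_{2k},z^k\rangle=0$, $\langle\chi_{2k-1},z^{-k}\rangle=\alpha_{2k-1}r_k$, $\langle\chi_{2k},z^{-k}\rangle=\rho_{2k-1}r_k$ in $L^2(d\mu)$ (Szeg\H{o} recursion plus $\langle 1,\Phi_m^*\rangle=\lVert\Phi_m\rVert^2$). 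With that in hand, the level-$k$ Gram--Schmidt is literally your base-case $2\times2$ computation with $(\alpha_1,\rho_1)$ replaced by $(\alpha_{2k-1},\rho_{2k-1})$, the common factor $r_k$ dropping out upon normalization; you should also note, as the paper does with its ``$\delta'=0$'' step, that when producing the second vector of each pair the extra orthogonalization against the first contributes nothing. The remaining assertions (symmetry, $7$-diagonality, $Re_1=e_1$, and reality of $R$ when $U^T=U$) you handle exactly as the paper does.
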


\begin{proof}
By the spectral theorem (cf. \eqref{isometry}), we may conflate $U$ with the operator $f(z)\mapsto zf(z)$ in $L^2(d\mu)$ and $e_1$ with the constant function $1$.  In this way, we identify the basis $\{s_k\}_{k=0}^\infty$ with the functions formed by orthonormalizing the sequence $\{1,\sin\theta,\cos\theta,\sin 2\theta, \cos 2\theta, \ldots\}$ in $L^2(\mu)$.  Here and below we write $z=e^{i\theta}$.

Following the notation from~\cite[Section 4.2]{OPUC1}, let us write $\{\chi_k\}_{k=0}^\infty$ and $\{x_k\}_{k=0}^\infty$ for the orthonormal bases formed by orthonormalizing $\{1,z,z^{-1},z^2,z^{-2},\ldots\}$ and $\{1,z^{-1},z,z^{-2},z^2,\ldots\}$, respectively, in $L^2(d\mu)$.  Recall that the traditional CMV matrix $\calC=\calL\calM$ represents $U$ in the $\{\chi_k\}$ basis and that $\calM$ represents the change of basis from $\{\chi_k\}$ to $\{x_k\}$; specifically, $\calM_{jk} = \langle x_j, \chi_k\rangle$.

It is elementary to verify that $\calN^T\calN = \calM$ and consequently,
$$
\calN \calC \calN^{-1} = \calN \calL\calM \calN^{-1} = \calN \calL \calN^T.
$$
Thus, the key assertion to be proved is that $\calN$ represents the change of basis from $\{\chi_k\}$ to $\{s_k\}$.  To this end, let us first observe that
\begin{align*}
\chi_{2n-1}(z) = z^{1-n} \phi_{2n-1}(z) = \tfrac{1}{\rho_1\rho_2\cdots\rho_{2n-1}}\bigl[\rho_{2n-1} z^n + \cdots + 0 z^{-n} \bigr], \\
\chi_{2n}(z)   = z^{-n} \phi_{2n}^*(z) = \tfrac{1}{\rho_1\rho_2\cdots\rho_{2n-1}}\bigl[-\alpha_{2n-1} z^n + \cdots + z^{-n} \bigr].
\end{align*}
Here ellipsis indicates terms in the linear span of $\{z^{n-1},\ldots,z^{1-n}\}$; we will continue this convention for the remainder of the proof.

Proceeding directly from the definition of the Gram--Schmidt process, we have
$$
s_{2n-1}(z) = c \tfrac{z^n-z^{-n}}{2i} + \cdots + 0 \tfrac{z^n+z^{-n}}{2}
$$
for some positive constant $c$.  Correspondingly, there is a $c'>0$ so that
$$
s_{2n-1}(z) = \frac{c'}{\sqrt{2(1-\real \alpha_{2n-1})}}  \bigl[ i \rho_{2n-1} \chi_{2n}(z) - i (1-\alpha_{2n-1}) \chi_{2n-1} \bigr] + \cdots .
$$
In fact, $c'=1$ and the ellipsis terms vanish.  To see this, we observe that $s_{2n-1}$, $\chi_{2n}$, and $\chi_{2n-1}$ are all unit vectors and are all perpendicular to elements
in the span of $\{z^{n-1},\ldots,z^{1-n}\}$.  This confirms all odd-indexed columns of $\calN^{-1}=\calN^T$.

To verify the even-index columns (after the zeroth, which is trivial) we first observe that there are some $\gamma >0$ and $\delta\in\C$ so that
$$
s_{2n}(z) = \delta \tfrac{z^n-z^{-n}}{2i} + \cdots +  \gamma \tfrac{z^n+z^{-n}}{2}.
$$
Noting also that
\begin{align*}
& (1-\bar\alpha_{2n-1}) \chi_{2n}(z) + \rho_{2n-1} \chi_{2n-1}(z) \\
{}={} & 2 (1 - \real \alpha_{2n-1}) \tfrac{z^n+z^{-n}}{2} + \cdots + 2(\imag \alpha_{2n-1}) \tfrac{z^n-z^{-n}}{2i}
\end{align*}
we deduce that
$$
s_{2n}(z) = \frac{\gamma'}{\sqrt{2(1-\real \alpha_{2n-1})}}  \bigl[ (1-\bar\alpha_{2n-1}) \chi_{2n}(z) + \rho_{2n-1} \chi_{2n-1}(z)\bigr] + \delta' s_{2n-1}(z)  + \cdots
$$
for some $\gamma'>0$ and $\delta'\in\C$.  Again the ellipsis terms vanish because they are perpendicular everything else that appears in this formula.  To determine $\gamma'$ and $\delta'$, we first exploit that $s_{2n}$ is perpendicular to $s_{2n-1}$.  Using the expansion of $s_{2n-1}$ in terms of $\{\chi_k\}$ proved above, this shows that $\delta'=0$.
Lastly, the fact that $s_{2n}$ is a unit vector confirms $\gamma'=1$.  This justifies the even-indexed columns of $\calN^{-1}=\calN^T$ and so completes the proof of \eqref{SCMV1}.

The symmetry and $7$-diagonal properties of $\calS$ are obvious from the construction.

Finally, suppose $U^T=U$. Note that columns of $R$ are the vectors obtained from the orthonormalization of
$e_1, [\imag U ] e_1, [\real U ]e_1, [\imag (U^2)] e_1, [\real(U^2)] e_1, \ldots$
 But $U^\dagger = \bar{U}$ (entrywise complex conjugation), which means that each of $\real (U^m)$ and $\imag (U^m)$ are real matrices for any $m$. This implies that $R$ is also real.
\end{proof}

\begin{remarks}
1. Similar arguments show that, unless $n=2k$ and $\det U= -1$ (which is equivalent to $\alpha_{n-1}=1$), an $n\times n$ unitary matrix $U$ with $e_1$ cyclic can be reduced to the ``symmetric CMV'' form $$
\calS(\alpha_0,\ldots,\alpha_{n-1}) = \calN \calL \calN^T,
$$
where $\calN = \operatorname{diag}\left( [1],\Psi_1,\Psi_3,\ldots,\Psi_{n-2}\right)$, $\calL = \operatorname{diag}\left( \Xi_0,\Xi_2,\Xi_4,\ldots,\Xi_{n-1}\right)$ if $n=2k+1$ and
$\calN = \operatorname{diag}\left( [1],\Psi_1,\Psi_3,\ldots,\Psi_{n-1}\right)$, $\calL = \operatorname{diag}\left( \Xi_0,\Xi_2,\Xi_4,\ldots,\Xi_{n-2}\right)$ if $n=2k$.
As before, $\Xi_{n-1}=[\bar\alpha_{n-1}]$, while $\Psi_{n-1} = \left[ \sqrt{\alpha_{n-1}} \right]$, where the square root corresponds to the argument $\operatorname{Arg}_{[0,2\pi)}$ with the branch cut along $\bbR_+$ (this is just the $(1,1)$-entry of~\eqref{SCMV3} using $|\alpha_{n-1}|=1$).

2. Similarly, unless $n=2k$ and $\det U = 1$, then one can instead work out an alternative symmetric CMV form $\widetilde{\calS}$ form obtained by orthonormalizing the sequence $\{e_1, [\real U ]e_1, [\imag U ] e_1, [\real(U^2)] e_1, [\imag (U^2)] e_1,  \ldots\}$.
 In this case we obtain the matrix
\begin{equation*}\label{tSCMV1}
\widetilde{\calS}(\alpha_0,\alpha_1,\ldots):= \widetilde{\calN} \calL \widetilde{\calN}^T,
\end{equation*}
 where $\calL$ is as in~\eqref{CMV2}, and
\begin{align*}
\widetilde{\calN} &= \operatorname{diag}\left( [1],\widetilde{\Psi}_1,\widetilde{\Psi}_3,\ldots\right), \\
\widetilde{\Psi}_k & =
\frac{1}{\sqrt{2(1+\real \alpha_k)}}
\left[
\begin{array}{cc}
1+ \bar{\alpha}_k & \rho_k \\
i\rho_k & -i(1+\alpha_k)
\end{array}
\right].
\end{align*}
 \end{remarks}

Note that $\det U =-1$ for $COE(n)$ is a probability 0 event. Therefore with probability $1$ any matrix from $COE(n)$ can be orthogonally reduced to the symmetric CMV form, while fixing $e_1$ and preserving the spectral measure. Moreover, matrices with different spectral measures lead to different sets of Verblunsky coefficients, and therefore different symmetric CMV forms.


The infinite unitary CMV matrix with $\alpha_n=0$ is the bilateral shift (after reordering basis elements).  The symmetric CMV form of this operator is
\begin{equation*}\label{bilateral}
\begin{bmatrix}
0 & \tfrac{i}{\sqrt2} & \tfrac{1}{\sqrt2} & 0 & 0 & 0 & 0 & \\
\tfrac{i}{\sqrt2} & 0 & 0 & \tfrac12 & -\tfrac{i}{2} & 0 & 0 & \\
\tfrac{1}{\sqrt2} & 0 & 0 & \tfrac{i}{2} & \tfrac12 & 0 & 0 & \\
0 & \tfrac12 & \tfrac{i}{2} & 0 & 0 & \tfrac12 & -\tfrac{i}{2} & \\
0 & -\tfrac{i}{2} & \tfrac12 & 0 & 0 & \tfrac{i}{2} & \tfrac12 & \\
0 & 0 & 0 & \tfrac12 & \tfrac{i}{2} & 0 & 0 & \ddots \\
0 & 0 & 0 & -\tfrac{i}{2} & \tfrac12 & 0 & 0 & \\
 &    &   &               &           & \ddots & & \ddots
\end{bmatrix}
\end{equation*}


\smallskip

Similar arguments lead to the analogue of Proposition~\ref{thmSCMV} for $CSE(n)$. The corresponding (complex quaternionic) symmetric CMV form is just $\calS \otimes I_2$ (with $\calS$ as above), and the reducing matrix $R$ is in $\bbUSp(n)$.

\begin{appendix}

\section{Quaternions}\label{sQuaternions}

Let us carefully define the algebras of real quaternions $\bbHR$ and complex quaternions $\bbHC$, as well as discuss quaternionic matrices.

\subsection{Real and complex quaternions}\label{ssQuaternions}

In modern language, the real quaternions were introduced by Hamilton as the unital associative algebra $\bbHR$ over $\R$ generated by $1,\mathsf{i,j,k}$ with (non-commutative) multiplication defined by
\begin{equation*}\label{quaternionMult}
\mathsf{i}^2=\mathsf{j}^2=\mathsf{k}^2=\mathsf{ijk}=-1.
\end{equation*}

We identify the underlying vector space with $\R^4$ by using $1,\mathsf{i},\mathsf{k},\mathsf{j}$ as the standard basis.  Note the deliberate reordering!  This makes for a prettier regular representation $q\mapsto \mathfrak{L}(q)$:  let $q=a+b\mathsf{i}+c\mathsf{k}+d\mathsf{j} \in\bbHR$, then
$$
q (w+x\mathsf{i}+y\mathsf{k}+z\mathsf{j}) = (w'+x'\mathsf{i}+y'\mathsf{k}+z'\mathsf{j})
$$
if and only if
\begin{equation}\label{E:left_times}
\begin{bmatrix} w'\\x'\\y'\\z'\end{bmatrix} = \mathfrak{L}(q) \begin{bmatrix} w\\x\\y\\z\end{bmatrix}
\qtq{with} \mathfrak{L}(q) := \begin{bmatrix} \;a & -b & -c & -d\; \\ \;b & \hphantom{-}a & \hphantom{-}d & -c\; \\
                   \;c & -d & \hphantom{-}a & \hphantom{-}b\; \\ \;d & \hphantom{-}c & -b & \hphantom{-}a\; \end{bmatrix}.
\end{equation}

The $2\times2$ blocks appearing in \eqref{E:left_times} correspond to the standard matrix representations of complex numbers.   This leads to the usual
representation of real quaternions as $2\times 2$ complex matrices:
\begin{equation}\label{E:RSU(2)}
 a+b\mathsf{i}+c\mathsf{k}+d\mathsf{j} \mapsto \begin{bmatrix} a +i b  & -c+id  \\ c+id & \hphantom{-} a-ib  \end{bmatrix} =: \mathfrak{C}(a+b\mathsf{i}+c\mathsf{k}+d\mathsf{j}),
\end{equation}
where $a,b,c,d \in \bbR$.

In particular, this shows that the group of unimodular real quaternions (i.e., $q\in\bbHR$ with $a^2+b^2+c^2+d^2=1$) is the special unitary group $\mathbb{SU}(2)$.


The analogue of \eqref{E:left_times} for multiplication on the right is as follows:
\begin{gather*}\label{E:right_times}
\mathfrak{R}(a+b\mathsf{i}+c\mathsf{k}+d\mathsf{j}) := \begin{bmatrix} \;a & -b & -c & -d\; \\ \;b & \hphantom{-}a & -d & \hphantom{-}c\; \\
                   \;c & \hphantom{-}d & \hphantom{-}a & - b\; \\ \;d & -c & \hphantom{-}b & \hphantom{-}a\; \end{bmatrix},
\end{gather*}
which is to say
$$
(w+x\mathsf{i}+y\mathsf{k}+z\mathsf{j}) q = (w'+x'\mathsf{i}+y'\mathsf{k}+z'\mathsf{j})
\iff\quad\begin{bmatrix} w'\\x'\\y'\\z'\end{bmatrix} = \mathfrak{R}(q) \begin{bmatrix} w\\x\\y\\z\end{bmatrix}.
$$
Note that $q\mapsto\mathfrak{R}(q)$ is not a representation (i.e. homomorphism) of $\HR$, but rather of the reversed algebra.  Needless to say, all matrices $\mathfrak{R}(q)$, $q\in\HR$ commute with all $\mathfrak{L}(p)$, $p\in\HR$.  Indeed, the general theory of matrix algebras (cf. \cite[Theorem~3.4.A]{bWeyl}) guarantees that \emph{every} matrix that commutes with all $\mathfrak{L}(p)$, $p\in\HR$, must be of the form $\mathfrak{R}(q)$ for some $q\in\HR$.

\bigskip

The complex quaternions $\bbHC$ are defined in the same exact way as $\bbHR$ but over $\bbC$ instead of $\bbR$. For $q = a + b \ii + c \kk + d\jj \in\bbHC$ (now $a,b,c,d\in\bbC$), we will be using  the representation $\fC(q)$ defined by same expression~\eqref{E:RSU(2)}.

\bigskip

Note that $\fC(\bbHR)$ consists of $2\times2$ complex matrices $(\alpha_{ij})_{i,j=1}^2$ with $\alpha_{11} = \bar{\alpha}_{22}$ and $\alpha_{12} = -\bar{\alpha}_{21}$, and $\fC(\bbHC)$ consists of all $2\times2$ complex matrices.

For a (real or complex) quaternion $q = a + b \ii + c \kk + d\jj $ let us define its \textit{conjugate} by
$$
\bar{q} = a - b \ii - c \kk - d\jj,
$$
and its \textit{Hermitian conjugate} by
$$
q^\dagger = \bar{a} - \bar{b} \ii - \bar{c} \kk - \bar{d}\jj.
$$
Of course $\bar{q} = q^\dagger$ for $q\in\bbHR$.

\subsection{Matrices of quaternions}\label{ssQuaternionMatrices}

For the rest of the section, let $Q$ be an  $n\times n$ matrix of (real or complex) quaternions 
$Q = [q_{ij}]_{i,j=1}^n$.

We define $\mathfrak{C}(Q)$ 
 as the
matrix formed by replacing each quaternion entry by the corresponding matrix representation
\begin{equation*}\label{fC}
\fC(Q) = \left[\fC(q_{ij})\right]_{i,j=1}^n.
\end{equation*}

Let us define 
$$
Q^\dagger = [q^\dagger_{ji}]_{i,j=1}^n.
$$
It is easy to see that
$$
\fC(Q^\dagger) = \fC(Q)^\dagger,
$$
the usual Hermitian conjugation of complex matrices.

For any $2n \times 2n$ complex matrix $M$ we define its (time reversal) dual matrix by
\begin{equation*}
M^R:= Z^T M^T Z,
\end{equation*}
where
\begin{equation}\label{z}
Z:=I_n \otimes \begin{bmatrix} 0 & -1 \\ 1 & 0\end{bmatrix},
\end{equation}
the $2n\times 2n$ block diagonal matrix with $n$ copies $\begin{bmatrix} 0 & -1 \\ 1 & 0\end{bmatrix}$ along the diagonal.

For a quaternionic matrix $Q$ let us define $Q^R = \fC^{-1}(\fC(Q)^R)$.
It is easy to verify that if $Q$ is complex quaternionic then
$$
Q^R  = [\bar{q}_{ji}]_{i,j=1}^n.
$$
If $Q$ is real quaternionic matrix, then $Q^R=Q^\dagger$ coincide, which implies
\begin{equation}\label{realQuatDuality}
Z^T \fC(Q)^T Z = \fC(Q)^\dagger.
\end{equation}

We say that a  (complex or real) quaternionic matrix is self-dual if $Q^R=Q$.

We say that a (complex or real) quaternionic matrix $Q$ is unitary if $\fC(Q)$ is unitary. 

Finally, by the eigenvalues of a quaternionic matrix $Q$ we mean the eigenvalues of the complex matrix $\fC(Q)$.

\section{Basics of Orthogonal Polynomials on the Unit Circle}\label{sOPUC}



We will introduce some basics of the theory of orthogonal polynomials on the unit circle. For more details we refer the reader, e.g., to~\cite{OPUC1,OPUC2}.

Suppose we have a positive probability measure $\mu$ on the unit circle $\partial\bbD$ having at least $n$ points in its support. For the purposes of this paper, the typical example one should keep in mind  is $\mu$ being the spectral measure of an $n\times n$ unitary matrix $U$ with distinct eigenvalues (see Section~\ref{sSpectral}).

Then applying Gram-Schmidt procedure to the monomials $\{z^k\}_{k=0}^{n-1}$ with respect to the inner product  $\langle f(z),g(z) \rangle :=\int_{\partial\bbD} \overline{f(e^{i\theta})}{g(e^{i\theta})} d\mu(\theta)$ in $L^2(d\mu)$, one obtains a sequence $\{\Phi_k(z)\}_{k=0}^{n-1}$ of monic polynomials orthogonal with respect to $\mu$
\begin{equation}\label{OPUC}
\langle \Phi_j(z), \Phi_k(z) \rangle =0, \quad \mbox{ if } j\ne k.
\end{equation}

The famous result of Szeg\H{o} is that these orthogonal polynomials satisfy the recurrence relation
\begin{equation}\label{recurrence}
\Phi_{k+1}(z) = z \Phi_k(z)-\bar{\alpha}_k \Phi_k^*(z)
\end{equation}
for some sequence of complex coefficients $\{\alpha_j\}_{j=0}^{n-2}$ from $\bbD$ (referred to as Verblunsky coefficients), where for any polynomial $P(z)$ of degree $k$ we define
\begin{equation*}
P^*(z)=z^k \overline{P(\bar{z}^{-1})}
\end{equation*}
(i.e., if $P(z)=
z^k+\kappa_1 z^{k-1}+\ldots+\kappa_{k-1} z+\kappa_k$, then $P^*(z)=\bar{\kappa}_k z^k + \bar{\kappa}_{k-1} z^{k-1} + \ldots + \bar{\kappa}_1 z + 1)$.

It should be noted that if $\mu$ has exactly $n$ points in its support, then $\{z^j \}_{j=0}^{n-1}$ form a basis, and therefore $z^{n}$ is a linear combination of $\{z^j \}_{j=0}^{n-1}$ in $L^2(d\mu)$. However Gram--Schmidt procedure can still be applied to produce a polynomial $\Phi_n(z)$ (which is of norm $0$ in $L^2(d\mu)$, of course), and~\eqref{recurrence} holds with a unimodular coefficient $\alpha_{n-1}$.

In fact, there is a one-to-one correspondence between all probability measures supported on $n$ distinct points
\begin{equation}\label{finiteMeasure}
\mu(\theta)=\sum_{j=1}^n \mu_j \delta_{e^{i\theta_j}}
\end{equation}
and sequences of Verblunsky coefficients $\{\alpha_k\}_{k=0}^{n-1}$ with $\alpha_k \in \bbD$ for $0\le k \le n-2$ and $\alpha_{n-1}\in\partial\bbD$. So $\alpha_0,\ldots,\alpha_{n-1}$ can be viewed as a change of variables from $e^{i\theta_1},\ldots,e^{i\theta_n},\mu_1,\ldots,\mu_n$ (subject to $\sum_{j=1}^n \mu_j =1$). This was initially explored in~\cite{KN} (see also Forrester--Rains~\cite{ForRai06}), and is developed further in our Section~\ref{sModels}.



For each $0\le k \le n-1$, denote the orthonormal polynomials
\begin{equation*}
\phi_k(z)=\tfrac{1}{||\Phi_k||}\Phi_k(z),
\end{equation*}
where the norm is in $L^2(d\mu)$. We summarize some of the properties of orthogonal polynomials that we will need in the next lemma.


\begin{lemma}\label{OP_lemma}
\begin{itemize}
\item[(i)]  Let $1\le k \le n$ and $\Phi_k(z)=\prod_{j=1}^k (z-z_j)$. Then
\begin{equation*}
\Phi_k(0)  = (-1)^k \prod_{j=1}^k z_j = -\bar\alpha_{k-1} ;
\end{equation*}
If all $\alpha_j$'s are real, then
\begin{align*}
\Phi_k(1) & = \prod_{j=1}^k (1-z_j) = \prod_{j=0}^{k-1} (1-\alpha_j) ;\\
\Phi_k(-1) &=  \prod_{j=1}^n (1+z_j) = \prod_{j=0}^{k-1} (1+(-1)^j\alpha_j ).
\end{align*}
\item[(ii)]  For any $0\le k \le n$, $||\Phi_k||^2 = \prod_{j=0}^{k-1} (1-|\alpha_j|^2)$.
\item[(iii)] For any  $0\le k \le n-1$, the Christoffel--Darboux formula holds:
\begin{equation*}
\sum_{j=0}^{k-1} \phi_j(z) \overline{\phi_j(\zeta)} = \frac{\phi_k^*(z) \overline{\phi_k^*(\zeta)} - \phi_k(z) \overline{\phi_k(\zeta)}}{1-z\bar{\zeta}}.
\end{equation*}
\item[(iv)] The Cauchy formula holds:
\begin{equation*}\label{cauchy}
\det \left[ \frac{1}{1-z_j \bar{z}_s} \right]_{1\le j,s \le k} = \frac{\prod_{j<s}|z_s-z_j|^2}{\prod_{j,s=1}^k(1-z_j \bar{z}_s)}.
\end{equation*}
\item[(v)] If $\mu$ is of the form~\eqref{finiteMeasure}, then
$$
\prod_{j=0}^{n-2} (1-|\alpha_j|^2)^{n-j-1} = \left|\Delta\left(e^{i\theta_1},\ldots,e^{i\theta_n}\right)\right|^2 \prod_{j=1}^n \mu_j .
$$
\item[(vi)] For any  $0\le k \le n$, if $\Phi_k(z)=\prod_{j=1}^k (z-z_j)$ then
\begin{equation}\label{z-alpha}
\prod_{j=0}^{k-1} (1-|\alpha_j|^2)^{j+1} = \prod_{j,s=1}^k (1-z_j \bar{z}_s).
\end{equation}
\end{itemize}
\end{lemma}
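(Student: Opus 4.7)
Parts (i)--(iv) are classical OPUC identities that I would dispatch briefly. For (i), the equality $\Phi_k(0) = (-1)^k\prod z_j$ is immediate from the factored form, while $\Phi_k(0) = -\bar\alpha_{k-1}$ comes from evaluating the Szeg\H{o} recurrence~\eqref{recurrence} at $z=0$ and using $\Phi_{k-1}^*(0)=1$; for real Verblunsky coefficients, the formulas at $\pm 1$ follow by induction on the recurrence together with the identities $\Phi_k^*(1)=\Phi_k(1)$ and $\Phi_k^*(-1)=(-1)^k\Phi_k(-1)$. For (ii), squaring the recurrence and using $\Phi_{k+1}\perp\mathrm{span}\{1,z,\ldots,z^k\}$ together with $\|z\Phi_k\|=\|\Phi_k\|=\|\Phi_k^*\|$ gives $\|\Phi_{k+1}\|^2=(1-|\alpha_k|^2)\|\Phi_k\|^2$, and induction yields the product formula. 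Part (iii) is the standard Christoffel--Darboux derivation obtained by applying the recurrences for $\Phi_k$ and $\Phi_k^*$ and telescoping, and (iv) is the classical Cauchy determinant identity.

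For (v), I would set $\mu=\sum_j\mu_j\delta_{e^{i\theta_j}}$ and form the matrix $M_{jk}=\phi_{k-1}(e^{i\theta_j})$. Since $\{\phi_0,\ldots,\phi_{n-1}\}$ is an orthonormal basis for $L^2(\mu)$, orthonormality reads $M^\dagger\,\diag(\mu_1,\ldots,\mu_n)\,M=I_n$, so $|\det M|^2\prod_j\mu_j=1$. On the other hand, extracting leading coefficients $\phi_k(z)=z^k/\|\Phi_k\|+\cdots$ gives $|\det M|=|\Delta(e^{i\theta_1},\ldots,e^{i\theta_n})|\big/\prod_{k=0}^{n-1}\|\Phi_k\|$, and combining with (ii) delivers (v).

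The crux is (vi). Observing that $\Phi_k^*(z)=\prod_s(1-z\bar z_s)$ whenever $\Phi_k(z)=\prod_s(z-z_s)$, the target becomes $\prod_{r=1}^k\Phi_k^*(z_r)=\prod_{j=0}^{k-1}(1-|\alpha_j|^2)^{j+1}$, which I would prove by induction on $k$. The key mini-miracle at the inductive step is that if $z_r$ is a root of $\Phi_{k+1}$ then the recurrence gives $z_r\Phi_k(z_r)=\bar\alpha_k\Phi_k^*(z_r)$, and feeding this into $\Phi_{k+1}^*(z)=\Phi_k^*(z)-\alpha_k z\Phi_k(z)$ yields the clean telescoping relation
\begin{equation*}
\Phi_{k+1}^*(z_r)=(1-|\alpha_k|^2)\Phi_k^*(z_r),
\end{equation*}
so $\prod_{r=1}^{k+1}\Phi_{k+1}^*(z_r)=(1-|\alpha_k|^2)^{k+1}\prod_{r=1}^{k+1}\Phi_k^*(z_r)$. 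To close the induction it remains to show $\prod_{r=1}^{k+1}\Phi_k^*(z_r)=\prod_{s=1}^k\Phi_k^*(w_s)$, where $w_s$ are the roots of $\Phi_k$. Assuming $\alpha_k\ne0$ (with the degenerate case handled by continuity), I would substitute $\Phi_k^*(z_r)=z_r\Phi_k(z_r)/\bar\alpha_k$, use (i) to evaluate $\prod_r z_r=(-1)^k\bar\alpha_k$, and compute $\prod_r\Phi_k(z_r)=\mathrm{Res}(\Phi_{k+1},\Phi_k)=\prod_s\Phi_{k+1}(w_s)=(-\bar\alpha_k)^k\prod_s\Phi_k^*(w_s)$, where the final equality again uses the recurrence at a root of $\Phi_k$. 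All powers of $\bar\alpha_k$ then cancel identically, which is precisely the arithmetic miracle forcing the exponent $j+1$ in~\eqref{z-alpha}.

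The main obstacle is this resultant bookkeeping in (vi): the cancellation of $\bar\alpha_k$-powers is delicate and offers little room to spare, and it is what makes the Jacobian calculation in Section~\ref{sEigenvalues} come out to the required clean form. Once this step is in hand, the remaining pieces of the lemma are routine applications of the recurrence and of standard determinantal identities.
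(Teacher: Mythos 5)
Your proposal is correct, and for the two substantive parts it takes a genuinely different route from the paper. For (vi) the paper argues determinantally: it writes $|\Delta(z_1,\ldots,z_k)|^2$ as $\det\bigl(\left[\Phi_{s-1}(z_j)\right]\left[\overline{\Phi_{j-1}(z_s)}\right]\bigr)$, recognizes this as the Christoffel--Darboux kernel evaluated at the zeros (where $\phi_k(z_j)=0$ kills one term, leaving $\phi_k^*(z_j)\overline{\phi_k^*(z_s)}/(1-z_j\bar z_s)$), and then invokes the Cauchy determinant (iv) and the norm formula (ii) before cancelling $|\Delta|^2$; so (iii) and (iv) are essential inputs there. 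Your induction via the two Szeg\H{o} recurrences, the evaluation $\Phi_{k+1}(0)=-\bar\alpha_k$, and the resultant symmetry $\prod_r\Phi_k(z_r)=(-1)^{k(k+1)}\prod_s\Phi_{k+1}(w_s)$ avoids (iii)--(iv) and the measure altogether; I checked the bookkeeping ($\prod_r z_r=(-1)^k\bar\alpha_k$, $\prod_s\Phi_{k+1}(w_s)=(-\bar\alpha_k)^k\prod_s\Phi_k^*(w_s)$, and $(-1)^{k(k+1)}=1$), and the powers of $\bar\alpha_k$ cancel exactly as you claim; if you prefer to avoid the continuity argument, the case $\alpha_k=0$ is immediate since then $\Phi_{k+1}(z)=z\Phi_k(z)$, $\Phi_{k+1}^*=\Phi_k^*$ and $\Phi_k^*(0)=1$. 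Similarly, for (v) you give a short self-contained proof via $M^\dagger\diag(\mu_1,\ldots,\mu_n)M=I_n$, whereas the paper simply cites Killip--Nenciu (their Lemma 4.1); for (i)--(iv) the paper likewise defers to the recurrence and the literature, so your sketches match in substance. What the paper's route buys is brevity, since (ii)--(iv) are already recorded in the same lemma; what yours buys is a purely algebraic identity in the Verblunsky coefficients, valid verbatim for any $\alpha_{k-1}\in\overline{\bbD}$ with no reference to orthonormal polynomials or norms. One small remark: your recurrence computation at $-1$ gives $\Phi_k(-1)=(-1)^k\prod_{j=0}^{k-1}(1+(-1)^j\alpha_j)$, equivalently $\prod_j(1+z_j)=\prod_{j=0}^{k-1}(1+(-1)^j\alpha_j)$, so the first equality displayed in part (i) of the statement carries a harmless $(-1)^k$ sign slip (and a stray upper index $n$); your version is the correct one and is what is actually used later.
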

\begin{proof}
(i) is immediate from~\eqref{recurrence}. (ii) and (iii) are standard facts, see~\cite{OPUC1,OPUC2}. A proof of (iv) can be found in, e.g., \cite[Lemma~7.6.A]{bWeyl}. (v) is proved in~\cite[Lem~4.1]{KN}.

Let us now prove (vi).  An alternate proof of this in \cite[Thm 2.1.4]{OPUC1}.  By elementary row operations,
\begin{equation*}
\begin{aligned}
|\Delta(z_1,\ldots,z_k)|^2  &= \det\left( \left[z_j^{s-1}\right]_{1\le j,s \le k}
\left[\bar{z}_s^{j-1}\right]_{1\le j,s \le k} \right) \\
&= \det \left( \left[\Phi_{s-1}(z_j)\right]_{1\le j,s \le k}
\left[\overline{\Phi_{j-1}(z_s)}\right]_{1\le j,s \le k} \right) \\
&= \left(\prod_{s=0}^{k-1} ||\Phi_s||^2 \right) \det \left( \left[\phi_{s-1}(z_j)\right]_{1\le j,s \le k}
\left[\overline{\phi_{j-1}(z_s)}\right]_{1\le j,s \le k} \right).
\end{aligned}
\end{equation*}
Note that $\phi_k^*(z)=\tfrac{1}{||\Phi_k||}\prod_{j=1}^k (1-z \bar{z}_j)$ and $\phi_k(z_s)=0$ for any $s$. Therefore the right-hand side of the last equation can be written as
\begin{equation*}
\begin{aligned}
&= \left(\prod_{s=0}^{k-1} ||\Phi_s||^2 \right) \det \left[ \frac{\phi_k^*(z_j) \overline{\phi_k^*(z_s)} }{1-z_j \bar{z}_s} \right]_{1\le j,s \le k} \\
&= \left(\prod_{s=0}^{k-1} ||\Phi_s||^2 \right) \left(\prod_{s=1}^{k} |\phi_k^*(z_s)|^2 \right) \det \left[ \frac{1}{1-z_j \bar{z}_s} \right]_{1\le j,s \le k}.
\end{aligned}
\end{equation*}
Now using parts (ii), and (iv), we obtain
\begin{equation*}
=\frac{|\Delta(z_1,\ldots,z_k)|^2\, \prod_{j,s=1}^k |1-z_j \bar{z}_s|^2}{\left(\prod_{s=0}^{k-1} (1-|\alpha_j|^2)^{j+1} \right) \prod_{j,s=1}^k(1-z_j \bar{z}_s)  },
\end{equation*}
which gives us \eqref{z-alpha} after cancelling by $|\Delta|^2$.
\end{proof}

%

\section{Some Distributions on Spheres}\label{sSphere}
For the reader's convenience we collect here some of the distributions needed in the body of the text. 

\begin{lemma}\label{lemSphere}
(a) Let a random vector $x\in \bbR^n$ be chosen according to the normalized uniform distribution the unit sphere $\{x\in\bbR^n : ||x||=1\}$. Then
$$
(\mu_1,\ldots,\mu_n) = (x_1^2,\ldots,x_n^2)
$$
are jointly distributed on the simplex
$
\sum_{j=1}^{n} \mu_j =1, \mu_j \ge0 , 1\le j \le n,
$
according to the probability distribution
$$
\tfrac{\Gamma\left(n/2\right)}{\left[\Gamma\left(1/2\right)\right]^n} \prod_{j=1}^n \mu_j^{-1/2} d\mu_1\ldots d\mu_{n-1}.
$$

(b) Let a random vector $x\in \bbR^{2n}$ be chosen according to the normalized uniform distribution the unit sphere $\{x\in\bbR^{2n} : ||x||=1\}$. Then
$$
(\mu_1,\ldots,\mu_{n}) = (x_1^2+x_2^2,\ldots,x_{2n-1}^2+x_{2n}^2)
$$
are jointly distributed on the simplex
$
\sum_{j=1}^{n} \mu_j =1, \mu_j \ge0 , 1\le j \le n,
$
according to the probability distribution
$$
(n-1)! \, d\mu_1\ldots d\mu_{n-1}.
$$

(c) Let a random vector $x\in \bbR^{2n}$ be chosen according to the normalized uniform distribution the unit sphere $\{x\in\bbR^{2n} : ||x||=1\}$. Then
$$
(\mu_1,\mu_2,\ldots,\mu_{n-1},\mu_{n},\mu_{n+1}) = (x_1^2+x_2^2,x_3^2+x_4^2,\ldots,x_{2n-3}^2+x_{2n-2}^2,x_{2n-1}^2,x_{2n}^2)
$$
are jointly distributed on the simplex
$
\sum_{j=1}^{n+1} \mu_j =1, \mu_j \ge0 , 1\le j \le n+1,
$
according to the probability distribution
$$
\tfrac{(n-1)!}{\pi}
  \tfrac{1}{\sqrt{\mu_n \mu_{n+1}}}  d\mu_1\ldots d\mu_n.
$$

(d) Let a random vector $x\in \bbR^{2n+1}$ be chosen according to the normalized uniform distribution the unit sphere $\{x\in\bbR^{2n+1} : ||x||=1\}$. Then
$$
(\mu_1,\ldots,\mu_{n},\mu_{n+1}) = (x_1^2+x_2^2,x_3^2+x_4^2,\ldots,x_{2n-1}^2+x_{2n}^2,x_{2n+1}^2)
$$
are jointly distributed on the simplex
$
\sum_{j=1}^{n+1} \mu_j =1, \mu_j \ge0 , 1\le j \le n+1,
$
according to the probability distribution
$$
\tfrac{\Gamma(n+\tfrac{1}{2})}{\Gamma(\tfrac12)}
 \tfrac{1}{\sqrt{\mu_{n+1}}}  d\mu_1\ldots d\mu_n.
$$

(e) Let a random vector $x\in \bbR^{4n}$ be chosen according to the normalized uniform distribution the unit sphere $\{x\in\bbR^{4n} : ||x||=1\}$. Then
$$
(\mu_1,\ldots,\mu_{n}) = (x_1^2+x_2^2+x_3^2+x_4^2,\ldots,x_{4n-3}^2+x_{4n-2}^2+x_{4n-1}^2+x_{4n}^2)
$$
are jointly distributed on the simplex
$
\sum_{j=1}^{n} \mu_j =1, \mu_j \ge0 , 1\le j \le n,
$
according to the probability distribution
$$
(2n-1)!
 \prod_{j=1}^n \mu_j  d\mu_1\ldots d\mu_{n-1}.
$$
\end{lemma}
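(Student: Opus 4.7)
All five parts are instances of a single fact: if $x$ is uniformly distributed on the unit sphere in $\bbR^N$ and we partition the coordinates into blocks of sizes $k_1,\ldots,k_n$ with $k_1+\cdots+k_n = N$, then the vector $(\mu_1,\ldots,\mu_n)$ whose entries are the sum-of-squares of $x$ within each block is Dirichlet distributed with parameters $(k_1/2,\ldots,k_n/2)$, i.e.\ has density
\begin{equation*}
  \frac{\Gamma\bigl(\tfrac{N}{2}\bigr)}{\prod_{j=1}^n \Gamma\bigl(\tfrac{k_j}{2}\bigr)} \prod_{j=1}^n \mu_j^{\,k_j/2 - 1}
\end{equation*}
on the simplex $\{\mu_j \ge 0,\ \sum \mu_j = 1\}$. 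The five claims then follow by reading off the block sizes: $(1,1,\ldots,1)$ in (a), $(2,2,\ldots,2)$ in (b), $(2,\ldots,2,1,1)$ in (c), $(2,\ldots,2,1)$ in (d), and $(4,4,\ldots,4)$ in (e). Elementary substitutions $\Gamma(1)=1$, $\Gamma(1/2)=\sqrt\pi$, $\Gamma(2)=1$, $\Gamma(n)=(n-1)!$, $\Gamma(2n)=(2n-1)!$ produce exactly the stated normalization constants.

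My plan for proving the general Dirichlet fact is via the standard Gaussian representation. Let $Y=(Y_1,\ldots,Y_N)$ consist of independent standard real Gaussians; then by rotational invariance $Y/\|Y\|$ is uniform on $S^{N-1}$ and is moreover independent of $\|Y\|$. Consequently, setting $G_j = \sum_{i \in \text{block } j} Y_i^2$, we have on one hand that the $G_j$ are independent $\chi^2_{k_j}$ (equivalently $\mathrm{Gamma}(k_j/2,1/2)$) variables, and on the other hand that
\begin{equation*}
  (\mu_1,\ldots,\mu_n) \;=\; \frac{1}{G_1+\cdots+G_n}(G_1,\ldots,G_n)
\end{equation*}
has the same distribution as the analogous ratio formed from the coordinates of the uniform vector on $S^{N-1}$. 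A direct change of variables from $(G_1,\ldots,G_n)$ to $(G_1+\cdots+G_n,\mu_1,\ldots,\mu_{n-1})$ in the product of Gamma densities, followed by integrating out the total sum $G_1+\cdots+G_n$ (which is itself $\mathrm{Gamma}(N/2,1/2)$ and independent of the normalized vector), yields the Dirichlet density displayed above.

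The proof is essentially calculation-free, and the only mild care needed is the final change of variables; since the sums of squares within each block form a partition of independent Gamma variables, no Jacobian factor beyond the simplex constraint arises. I would therefore present the general Dirichlet lemma explicitly and then simply list the specialization of block sizes that produces each of (a)--(e), checking the five resulting normalization constants in one line each.
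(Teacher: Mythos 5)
Your proof is correct, and all five normalization constants do come out right upon specializing the block sizes $(1,\dots,1)$, $(2,\dots,2)$, $(2,\dots,2,1,1)$, $(2,\dots,2,1)$, $(4,\dots,4)$ in the general Dirichlet density. Your route differs from the paper's in presentation rather than substance: the paper disposes of the lemma by citing the appendix of Killip--Nenciu (their Corollaries A.1, A.3, A.4) and remarking that parts (c)--(e) follow by the same arguments, so it never writes out a unified statement; you instead prove one self-contained master fact --- block sums of squares of a uniform point on $S^{N-1}$ are Dirichlet$(k_1/2,\dots,k_n/2)$ --- via the Gaussian representation $x \overset{d}{=} Y/\|Y\|$ and independence of $\|Y\|$ from the direction, and then read off all five cases in one line each. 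Your version buys self-containedness and makes transparent why the five seemingly different densities are one formula; the paper's buys brevity at the cost of sending the reader to another paper and of a case-by-case phrasing. One small correction to your write-up: in the change of variables $(G_1,\dots,G_n)\mapsto(S,\mu_1,\dots,\mu_{n-1})$ with $S=\sum_j G_j$, it is not true that ``no Jacobian factor arises'' --- one has $dG_1\cdots dG_n = S^{\,n-1}\,dS\,d\mu_1\cdots d\mu_{n-1}$. The factor $S^{n-1}$ is harmless: combined with $\prod_j G_j^{k_j/2-1} = S^{N/2-n}\prod_j \mu_j^{k_j/2-1}$ and the exponential it reconstitutes exactly the $\mathrm{Gamma}(N/2,1/2)$ density of $S$, which integrates out and produces the factor $\Gamma(N/2)$ in the normalization, so the conclusion stands, but you should state the Jacobian correctly rather than assert its absence.
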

\begin{proof}
Part (b) has been proved in~\cite[Cor~A.3]{KN}.

Part (a) follows from~\cite[Cor~A.1]{KN} and a change of variables. The normalization constant comes from~\cite[Cor~A.4]{KN}.

Parts (c), (d), (e) can be proved using the arguments in the proof of~\cite[Cor~A.3]{KN} and then using ~\cite[Cor~A.4]{KN} for the normalization constants.
\end{proof}

%
%

\section{Jacobian Determinants}\label{sJacobians}
Let us define the Jacobian determinants of real and complex maps. 

Recall that for a map $f:\bbR^n \to \bbR^n$,  $(z_1,\ldots,z_n) \mapsto (f_1,\ldots,f_n)$, the Jacobian determinant is defined to be
$$
J\left[\frac{\partial(f_1,\ldots,f_n)}{\partial(z_1,\ldots,z_n)}\right]:=
\det \left[ \frac{\partial f_j}{\partial z_k} \right]_{j,k=1}^n.
$$

To avoid confusion, for complex maps we will use a different notation. If each component of the map $f:\bbC^n\to \bbC^n$, $(z_1,\ldots,z_n) \mapsto (f_1,\ldots,f_n)$ is analytic, then one can define the complex Jacobian determinant
$$
J_\bbC\left[\frac{\partial(f_1,\ldots,f_n)}{\partial(z_1,\ldots,z_n)}\right]:=\det \left[ \frac{\partial f_j}{\partial z_k} \right]_{j,k=1}^n.
$$

For our purposes however, the following will be more natural. For a (not necessarily analytic) map $f:\bbC^n\to \bbC^n$, $(z_1,\ldots,z_n) \mapsto (f_1,\ldots,f_n)$, the real Jacobian determinant $J_\bbR$ is defined to be
$$
J_\bbR\left[\frac{\partial(f_1,\ldots,f_n)}{\partial(z_1,\ldots,z_n)}\right]:= J\left[\frac{\partial(\real f_1, \imag f_1,\ldots,\real f_n,\imag f_n)}{\partial(\real z_1, \imag z_1,\ldots,\real z_n,\imag z_n)}\right].
$$

We show below that for a given map $f:\bbC^n\to \bbC^n$, $(z_1,\ldots,z_n) \mapsto (f_1,\ldots,f_n)$, its Jacobian $J_\bbR f$ is equal to the Jacobian $J_\bbC \widetilde{f}$, where
$
\widetilde{f}:\bbC^{2n} \to \bbC^{2n}
$
is the mapping that takes $(z_1,\bar{z}_1,\ldots,z_n,\bar{z}_n)$ into $(f_1,\bar{f}_1,\ldots,f_n,\bar{f}_n)$, where we treat $z_j$ and $\bar{z}_j$ as independent variables.

\begin{lemma}\label{lemJacobian}
For a map $f:\bbC^n\to \bbC^n$, $(z_1,\ldots,z_n) \mapsto (f_1,\ldots,f_n)$,
$$
J_\bbR\left[\frac{\partial(f_1,\ldots,f_n)}{\partial(z_1,\ldots,z_n)}\right] = 
J_\bbC\left[\frac{\partial(f_1,\bar{f}_1,\ldots,f_n,\bar{f}_n)}{\partial(z_1,\bar{z}_1,\ldots,z_n,\bar{z}_n)}\right].
$$
\end{lemma}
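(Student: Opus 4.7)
The plan is to view the identity as a consequence of an invertible linear change of variables from the real/imaginary coordinates to the Wirtinger coordinates, combined with the multiplicativity of determinants under the chain rule.

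Write $z_k = x_k + i y_k$ and $f_j = u_j + i v_j$, and consider the invertible $\bbC$-linear map $T : \bbC^2 \to \bbC^2$ given in matrix form by
\begin{equation*}
T = \begin{bmatrix} 1 & \phantom{-}i \\ 1 & -i \end{bmatrix}, \qquad \det T = -2i,
\end{equation*}
which sends $(x,y)$ to $(x+iy, x-iy)$. Let $T_n = I_n \otimes T$, so that $T_n$ carries $(x_1,y_1,\ldots,x_n,y_n)$ to $(z_1,\bar z_1,\ldots,z_n,\bar z_n)$, and analogously (with a block of the same form on the output side) carries $(u_1,v_1,\ldots,u_n,v_n)$ to $(f_1,\bar f_1,\ldots,f_n,\bar f_n)$. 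Both have determinant $(-2i)^n$.

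Next I would apply the chain rule (treating $z_k,\bar z_k$ as formally independent variables, as is justified by the standard Wirtinger calculus) to write
\begin{equation*}
\frac{\partial(f_1,\bar f_1,\ldots,f_n,\bar f_n)}{\partial(z_1,\bar z_1,\ldots,z_n,\bar z_n)}
= T_n \cdot \frac{\partial(u_1,v_1,\ldots,u_n,v_n)}{\partial(x_1,y_1,\ldots,x_n,y_n)} \cdot T_n^{-1}.
\end{equation*}
Taking determinants, the factors $\det T_n$ and $\det T_n^{-1}$ cancel, leaving exactly
\begin{equation*}
J_\bbC\!\left[\frac{\partial(f_1,\bar f_1,\ldots,f_n,\bar f_n)}{\partial(z_1,\bar z_1,\ldots,z_n,\bar z_n)}\right]
= \det \frac{\partial(u_1,v_1,\ldots,u_n,v_n)}{\partial(x_1,y_1,\ldots,x_n,y_n)}
= J_\bbR\!\left[\frac{\partial(f_1,\ldots,f_n)}{\partial(z_1,\ldots,z_n)}\right].
\end{equation*}

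There is no genuine obstacle; the only minor subtlety is organizational, namely keeping the block ordering $(z_k,\bar z_k)$ pairwise in $k$ (rather than all holomorphic variables first, then anti-holomorphic) so that the conjugating matrix is literally the block-diagonal $T_n$. If one instead chose the alternative grouping, the same argument goes through after an even permutation of rows/columns, which does not affect the determinant. This is also why the identity holds without an extra sign or power-of-two factor: the prefactors $(-2i)^n$ arising from the two changes of coordinates are inverses of each other.
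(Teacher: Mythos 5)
Your proof is correct and is essentially the paper's own argument: the paper likewise writes the Wirtinger Jacobian matrix as the real Jacobian matrix conjugated (left- and right-multiplied) by constant block-diagonal change-of-basis matrices built from $2\times 2$ blocks, and concludes by taking determinants, with the prefactors cancelling. Your presentation via $T_n=I_n\otimes T$ and the chain rule is just a slightly tidier way of stating the same identity.
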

\begin{remark}
This implies that for an analytic map $f:\bbC^n\to \bbC^n$,
$$
J_\bbR\left[\frac{\partial(f_1,\ldots,f_n)}{\partial(z_1,\ldots,z_n)}\right]= \left|J_\bbC\left[\frac{\partial(f_1,\ldots,f_n)}{\partial(z_1,\ldots,z_n)}\right] \right|^2.
$$
\end{remark}
\begin{proof}

Indeed,
\begin{multline*}\left[\begin{array}{cc}
\frac{\partial f_j}{\partial z_k} & \frac{\partial \bar{f}_j}{\partial z_k} \\
\frac{\partial f_j}{\partial \bar{z}_k} & \frac{\partial \bar{f}_j}{\partial \bar{z}_k}
\end{array}
\right]_{1\le j,k\le n}
\\
=
\left(\operatorname{diag}\left[
\begin{array}{cc}
1/2 & -i/2 \\
i/2 & i/2
\end{array}
\right] \right)
\left[
\begin{array}{cc}
\tfrac{\partial \real f_j}{\partial \real z_k} & \tfrac{\partial \imag f_j}{\partial \real z_k} \\
\tfrac{\partial \real f_j}{\partial \imag z_k} & \tfrac{\partial \imag f_j}{\partial \imag z_k}
\end{array}
\right]_{1\le j,k\le n}
\left(\operatorname{diag} \left[
\begin{array}{cc}
1 & 1 \\
i & -i
\end{array}
\right]\right),
\end{multline*}
where $\operatorname{diag}[A]$ denotes a block diagonal matrix with $n$ copies of $A$ on the diagonal.
\end{proof}

In the following lemma we let
$$
\Phi(z) = z^n + \kappa_1 z^{n-1} + \ldots + \kappa_{n-1} z +\kappa_n = \prod_{j=1}^n (z-z_j),
$$
where we order $z_j$'s, e.g., by the absolute values and then by the argument. The mapping from the zeros to coefficients  is then injective. It can be viewed as $\bbR^n\to\bbR^n$ or as $\bbC^n\to\bbC^n$.

\begin{lemma}\label{jacobian}
\begin{itemize}
  \item[(a)] As an $\bbR^n\to\bbR^n$ map,
  $$
  J\left[\frac{\partial(\kappa_1,\ldots,\kappa_n)}{\partial(z_1,\ldots,z_n)}\right]=\Delta(z_1,\ldots,z_n);
  $$
  \item[(b)] As a $\bbC^n\to\bbC^n$ map,
  \begin{equation*}
  J_\bbR\left[\frac{\partial(\kappa_1,\ldots,\kappa_n)}{\partial(z_1,\ldots,z_n)}\right] = | \Delta(z_1,\ldots,z_n)|^2.
  \end{equation*}
\end{itemize}
\end{lemma}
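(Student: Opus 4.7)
The plan is to handle part (a) first by a direct computation, and then derive part (b) from part (a) using Lemma \ref{lemJacobian}.

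For part (a), my starting point is the identity
$$
\tfrac{\partial \Phi}{\partial z_k}(z) \;=\; -\prod_{l\ne k}(z-z_l),
$$
which is immediate from the factored form $\Phi(z) = \prod_j(z-z_j)$. Reading off the coefficient of $z^{n-j}$ on each side identifies $\partial \kappa_j/\partial z_k$ as the coefficient of $z^{n-j}$ in $-\prod_{l\ne k}(z-z_l)$. Evaluating this polynomial identity at the points $z=z_m$ for $m=1,\ldots,n$ then produces the matrix equation $VM=D$, where $V_{mj}=z_m^{n-j}$, $M_{jk}=\partial\kappa_j/\partial z_k$, and $D$ is the diagonal matrix with entries $-\Phi'(z_k)=-\prod_{l\ne k}(z_k-z_l)$ (since the right-hand side vanishes when $m\ne k$). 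Taking determinants, a column-reversal turns $\det V$ into $\Delta(z_1,\ldots,z_n)$ up to a sign, and the product $\prod_k\Phi'(z_k)=\prod_{k\ne l}(z_k-z_l)$ equals $\Delta(z_1,\ldots,z_n)^2$ up to a sign. Combining these gives $\det M=\pm\Delta(z_1,\ldots,z_n)$ as claimed; the sign conventions are immaterial for the applications, which only use the squared modulus.

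For part (b), I would invoke Lemma \ref{lemJacobian}, which rewrites the real Jacobian of $\bbC^n\to\bbC^n$ as the complex Jacobian of the doubled map $(z_1,\bar z_1,\ldots,z_n,\bar z_n)\mapsto(\kappa_1,\bar\kappa_1,\ldots,\kappa_n,\bar\kappa_n)$. The key observation is that each $\kappa_j$ is a \emph{polynomial} in $z_1,\ldots,z_n$ alone (it does not involve any $\bar z_k$), while $\bar\kappa_j$ depends only on $\bar z_1,\ldots,\bar z_n$. Hence $\partial \kappa_j/\partial \bar z_k=0$ and $\partial\bar\kappa_j/\partial z_k=0$ for all $j,k$. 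After reordering rows and columns, the $2n\times2n$ Jacobian matrix becomes block-diagonal with an analytic block $[\partial\kappa_j/\partial z_k]$ and its complex conjugate block $[\partial\bar\kappa_j/\partial\bar z_k]$. Part (a), applied as an identity of polynomials (so valid over $\bbC$), evaluates each block determinant, and multiplying gives $|\Delta(z_1,\ldots,z_n)|^2$.

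The only mildly delicate step is the sign bookkeeping in part (a), where three sign contributions (from column reversal in $\det V$, from $(-1)^n$ in $\det D$, and from rearranging pairs in $\prod_{k\ne l}(z_k-z_l)$) have to be combined. This is routine but easy to botch; fortunately, as already noted, these signs are washed out by the modulus in part (b) and in every downstream application.
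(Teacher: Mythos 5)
Your proposal is correct, but for part (a) it takes a genuinely different route from the paper. The paper proves (a) through symmetric-function identities: writing $\kappa_k=(-1)^k s_k$ and using Newton's identity $k s_k = (-1)^{k-1}p_k + (\text{linear combination of } s_1,\ldots,s_{k-1})$, Gaussian elimination reduces the Jacobian to that of the power sums $p_k=\sum_j z_j^k$, whose matrix of partials $[k\,z_j^{k-1}]$ is a Vandermonde matrix up to the factor $n!$. You instead differentiate the factorization $\Phi(z)=\prod_j(z-z_j)$ in each root and evaluate the resulting polynomial identity at the roots themselves, obtaining $VM=D$ with $V$ a (column-reversed) Vandermonde matrix and $D=\operatorname{diag}(-\Phi'(z_k))$, and then divide determinants. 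Your route is more self-contained (no Newton identities) and exhibits the Jacobian matrix explicitly; its two small costs are: (i) the evaluation step needs $\det V\neq 0$, i.e.\ pairwise distinct $z_j$, so you should add the one-line remark that both sides are polynomials in $z_1,\ldots,z_n$ and hence the identity extends from the dense set of distinct roots to all of $\bbC^n$ (distinctness is in any case the only situation used downstream); (ii) you get the answer only up to sign --- careful bookkeeping in your computation gives $(-1)^n\Delta(z_1,\ldots,z_n)$, and the same sign emerges if one tracks the paper's Newton-identity argument, so the discrepancy with the displayed formula is a matter of convention and, as you note, is irrelevant since only $\lvert J\rvert$ enters the change-of-variables formulas. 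For part (b) you do essentially what the paper does: the paper cites analyticity together with the remark after Lemma~\ref{lemJacobian} ($J_\bbR=\lvert J_\bbC\rvert^2$ for analytic maps), while you re-derive that remark directly by observing that holomorphy of the $\kappa_j$ in the $z_k$ makes the doubled $2n\times 2n$ Jacobian block-diagonal (the simultaneous row and column permutation leaves the determinant unchanged), giving $\lvert\Delta(z_1,\ldots,z_n)\rvert^2$.
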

\begin{proof}
Let $$s_k=\sum_{j_1<\ldots< j_k} z_{j_1}\ldots z_{j_k}$$
and $$p_k=\sum_{j=1}^n z_j^k.$$
Then by the Newton identity, $k s_k = (-1)^{k-1} p_k + \sum_{j=1}^{k-1} (-1)^{j-1}s_{k-j} p_j$, and so inductively we get that $k s_k = (-1)^{k-1} p_k +$linear combination of $s_1,\ldots,s_{k-1}$. Thus noting that $\kappa_k=(-1)^k s_k$ and using Gaussian elimination, we obtain part (a) of the lemma.

Part (b) then follows by using the remark after Lemma~\ref{lemJacobian} and analyticity of the change of variables.
\end{proof}
\end{appendix}

%
%
%

%

\noindent \textbf{{Acknowledgement}}

\bigskip
\noindent  The authors would like to thank Yan V. Fyodorov for drawing our attention to resonances for the systems with non-ideal coupling.

The first author was funded, in part, by US NSF grant DMS-1265868. The second author was funded, in part, by the grant KAW 2010.0063 from the Knut and Alice Wallenberg Foundation.

\bibliographystyle{plain}
\bibliography{mybib_rmt,mybib}

\def\cprime{$'$} \def\lfhook#1{\setbox0=\hbox{#1}{\ooalign{\hidewidth
  \lower1.5ex\hbox{'}\hidewidth\crcr\unhbox0}}} \def\cprime{$'$}
  \def\cprime{$'$}
\begin{thebibliography}{10}

\bibitem{AGT}
Yu. Arlinski{\u\i}, L.~Golinski{\u\i}, and E.~Tsekanovski{\u\i}.
\newblock Contractions with rank one defect operators and truncated {CMV}
  matrices.
\newblock {\em J. Funct. Anal.}, 254(1):154--195, 2008.

\bibitem{BorSin}
A.~Borodin and C.~D. Sinclair.
\newblock The {G}inibre ensemble of real random matrices and its scaling
  limits.
\newblock {\em Comm. Math. Phys.}, 291(1):177--224, 2009.

\bibitem{CMV_BGE}
A.~Bunse-Gerstner and L.~Elsner.
\newblock Schur parameter pencils for the solution of the unitary eigenproblem.
\newblock {\em Linear Algebra Appl.}, 154/156:741--778, 1991.

\bibitem{CMV}
M.~J. Cantero, L.~Moral, and L.~Vel{\'a}zquez.
\newblock Five-diagonal matrices and zeros of orthogonal polynomials on the
  unit circle.
\newblock {\em Linear Algebra Appl.}, 362:29--56, 2003.

\bibitem{Dyson}
F.~J. Dyson.
\newblock Statistical theory of the energy levels of complex systems. {I}.
\newblock {\em J. Mathematical Phys.}, 3:140--156, 1962.

\bibitem{Eng}
N.~Engheta.
\newblock Metamaterials with negative permittivity and permeability:
  background, salient features, and new trends.
\newblock {\em IEEE MTT-S International Microwave Symposium Digest},
  1:187--190, 2003.

\bibitem{For10}
P.~J. Forrester.
\newblock The limiting {K}ac random polynomial and truncated random orthogonal
  matrices.
\newblock {\em J. Stat. Mech.}, (P12018.), 2010.

\bibitem{bForrester}
P.~J. Forrester.
\newblock {\em Log-gases and random matrices}, volume~34 of {\em London
  Mathematical Society Monographs Series}.
\newblock Princeton University Press, Princeton, NJ, 2010.

\bibitem{For16}
P.~J. Forrester.
\newblock Analogies between random matrix ensembles and the one-component
  plasma in two-dimensions.
\newblock {\em Nuclear Phys. B}, 904:253--281, 2016.

\bibitem{ForKri}
P.~J. Forrester and M.~Krishnapur.
\newblock Derivation of an eigenvalue probability density function relating to
  the {P}oincar\'e disk.
\newblock {\em J. Phys. A}, 42(38):385204, 10, 2009.

\bibitem{ForRai06}
P.~J. Forrester and E.~M. Rains.
\newblock Jacobians and rank 1 perturbations relating to unitary {H}essenberg
  matrices.
\newblock {\em Int. Math. Res. Not.}, pages Art. ID 48306, 36, 2006.

\bibitem{Fyo01}
Y.~V. Fyodorov.
\newblock Spectra of random matrices close to unitary and scattering theory for
  discrete-time systems.
\newblock In {\em Disordered and complex systems ({L}ondon, 2000)}, volume 553
  of {\em AIP Conf. Proc.}, pages 191--196. Amer. Inst. Phys., Melville, NY,
  2001.

\bibitem{FyoKho07}
Y.~V. Fyodorov and B.~A. Khoruzhenko.
\newblock On absolute moments of characteristic polynomials of a certain class
  of complex random matrices.
\newblock {\em Comm. Math. Phys.}, 273(3):561--599, 2007.

\bibitem{FyoSav}
Y.~V. Fyodorov and D.~V. Savin.
\newblock Resonance scattering of waves in chaotic systems.
\newblock In {\em The {O}xford handbook of random matrix theory}, pages
  703--722. Oxford Univ. Press, Oxford, 2011.

\bibitem{FyoSom00}
Y.~V. Fyodorov and H.-J. Sommers.
\newblock Spectra of random contractions and scattering theory for
  discrete-time systems.
\newblock {\em JETP Letters}, 72(8):422--426, 2000.

\bibitem{FyoSom}
Y.~V. Fyodorov and H.-J. Sommers.
\newblock Random matrices close to {H}ermitian or unitary: overview of methods
  and results.
\newblock {\em J. Phys. A}, 36(12):3303--3347, 2003.
\newblock Random matrix theory.

\bibitem{bHornJohnson}
R.~A. Horn and C.~R. Johnson.
\newblock {\em Matrix analysis}.
\newblock Cambridge University Press, Cambridge, second edition, 2013.

\bibitem{Hough}
J.~B. Hough, M.~Krishnapur, Y.~Peres, and B.~Vir{\'a}g.
\newblock {\em Zeros of {G}aussian analytic functions and determinantal point
  processes}, volume~51 of {\em University Lecture Series}.
\newblock American Mathematical Society, Providence, RI, 2009.

\bibitem{KhoSom}
B.~A. Khoruzhenko and H.-J. Sommers.
\newblock Non-{H}ermitian ensembles.
\newblock In {\em The {O}xford handbook of random matrix theory}, pages
  376--397. Oxford Univ. Press, Oxford, 2011.

\bibitem{TruncOrthogonal}
B.~A. Khoruzhenko, H.-J. Sommers, and K.~{\.Z}yczkowski.
\newblock Truncations of random orthogonal matrices.
\newblock {\em Phys. Rev. E (3)}, 82(4):040106, 4, 2010.

\bibitem{KK_universality}
R.~Killip and R.~Kozhan.
\newblock Zeros of orthogonal polynomials on the unit circle with random
  decaying {V}erblunsky coefficients.
\newblock (in preparation).

\bibitem{KN}
R.~Killip and I.~Nenciu.
\newblock Matrix models for circular ensembles.
\newblock {\em Int. Math. Res. Not.}, (50):2665--2701, 2004.

\bibitem{KN_CMV}
R.~Killip and I.~Nenciu.
\newblock C{MV}: the unitary analogue of {J}acobi matrices.
\newblock {\em Comm. Pure Appl. Math.}, 60(8):1148--1188, 2007.

\bibitem{Kri09}
M.~Krishnapur.
\newblock From random matrices to random analytic functions.
\newblock {\em Ann. Probab.}, 37(1):314--346, 2009.

\bibitem{MR0121049}
L.~D. Landau and E.~M. Lifshitz.
\newblock {\em Electrodynamics of continuous media}.
\newblock Course of Theoretical Physics, Vol. 8. Translated from the Russian by
  J. B. Sykes and J. S. Bell. Pergamon Press, Oxford-London-New York-Paris;
  Addison-Wesley Publishing Co., Inc., Reading, Mass., 1960.

\bibitem{bMehta}
M.~L. Mehta.
\newblock {\em Random matrices}, volume 142 of {\em Pure and Applied
  Mathematics (Amsterdam)}.
\newblock Elsevier/Academic Press, Amsterdam, third edition, 2004.

\bibitem{Nov07}
J.~Novak.
\newblock Truncations of random unitary matrices and {Y}oung tableaux.
\newblock {\em Electron. J. Combin.}, 14(1):Research Paper 21, 12, 2007.

\bibitem{bPastur}
L.~Pastur and M.~Shcherbina.
\newblock {\em Eigenvalue distribution of large random matrices}, volume 171 of
  {\em Mathematical Surveys and Monographs}.
\newblock American Mathematical Society, Providence, RI, 2011.

\bibitem{Petz}
D.~Petz and J.~R{\'e}ffy.
\newblock Large deviation for the empirical eigenvalue density of truncated
  {H}aar unitary matrices.
\newblock {\em Probab. Theory Related Fields}, 133(2):175--189, 2005.

\bibitem{OPUC1}
B.~Simon.
\newblock {\em Orthogonal polynomials on the unit circle. {P}art 1}, volume~54
  of {\em American Mathematical Society Colloquium Publications}.

\bibitem{OPUC2}
B.~Simon.
\newblock {\em Orthogonal polynomials on the unit circle. {P}art 2}, volume~54
  of {\em American Mathematical Society Colloquium Publications}.
\newblock American Mathematical Society, Providence, RI, 2005.

\bibitem{S_CMV}
B.~Simon.
\newblock C{MV} matrices: five years after.
\newblock {\em J. Comput. Appl. Math.}, 208(1):120--154, 2007.

\bibitem{bSzNagyFoias}
B.~Sz.-Nagy and C.~Foia{\lfhook{s}}.
\newblock {\em Harmonic analysis of operators on {H}ilbert space}.
\newblock Translated from the French and revised. North-Holland Publishing Co.,
  Amsterdam, 1970.

\bibitem{bWeyl}
H.~Weyl.
\newblock {\em The {C}lassical {G}roups. {T}heir {I}nvariants and
  {R}epresentations}.
\newblock Princeton University Press, Princeton, N.J., 1939.

\bibitem{ZSommers}
K.~{\.Z}yczkowski and H.-J. Sommers.
\newblock Truncations of random unitary matrices.
\newblock {\em J. Phys. A}, 33(10):2045--2057, 2000.

\end{thebibliography}

\end{document}